\documentclass[11pt,reqno]{amsart}

\usepackage[T1]{fontenc}
\usepackage{lmodern}
\usepackage{microtype}
\usepackage{amsmath,amssymb,amsthm,mathtools,mathrsfs}
\usepackage{bm}
\usepackage{enumitem}
\usepackage{booktabs}
\usepackage{array}
\usepackage{xcolor}
\usepackage[colorlinks=true,linkcolor=blue!55!black,citecolor=blue!55!black,urlcolor=blue!55!black]{hyperref}
\usepackage{aliascnt}
\usepackage[nameinlink,capitalize,noabbrev]{cleveref}
\usepackage[margin=1.08in]{geometry}

\allowdisplaybreaks
\numberwithin{equation}{section}

\theoremstyle{plain}
\newtheorem{theorem}{Theorem}[section]
\newaliascnt{proposition}{theorem}
\newtheorem{proposition}[proposition]{Proposition}
\aliascntresetthe{proposition}
\newaliascnt{lemma}{theorem}
\newtheorem{lemma}[lemma]{Lemma}
\aliascntresetthe{lemma}
\newaliascnt{corollary}{theorem}
\newtheorem{corollary}[corollary]{Corollary}
\aliascntresetthe{corollary}
\newaliascnt{claim}{theorem}

\aliascntresetthe{claim}
\theoremstyle{definition}
\newaliascnt{definition}{theorem}
\newtheorem{definition}[definition]{Definition}
\aliascntresetthe{definition}
\theoremstyle{remark}
\newaliascnt{remark}{theorem}
\newtheorem{remark}[remark]{Remark}
\aliascntresetthe{remark}

\crefname{theorem}{Theorem}{Theorems}
\crefname{proposition}{Proposition}{Propositions}
\crefname{lemma}{Lemma}{Lemmas}
\crefname{corollary}{Corollary}{Corollaries}
\crefname{definition}{Definition}{Definitions}
\crefname{remark}{Remark}{Remarks}
\crefname{claim}{Claim}{Claims}

\newcommand{\T}{\mathbb T}
\newcommand{\R}{\mathbb R}
\newcommand{\Z}{\mathbb Z}
\newcommand{\N}{\mathbb N}
\newcommand{\Sym}{\operatorname{Sym}}
\newcommand{\Szero}{\mathcal S^{3\times 3}_0}

\newcommand{\diver}{\operatorname{div}}
\newcommand{\supp}{\operatorname{supp}}
\newcommand{\dev}{\operatorname{dev}}

\newcommand{\Rop}{\mathcal R_{\T}}
\newcommand{\Sloc}{\mathscr S_U}
\newcommand{\dd}{\,\mathrm d}
\newcommand{\eps}{\varepsilon}
\newcommand{\calD}{\mathcal D}
\newcommand{\calE}{\mathcal E}

\newcommand{\calO}{\mathcal O}
\newcommand{\one}{\mathbf 1}
\newcommand{\esssupp}{\operatorname*{ess\,supp}}
\newcommand{\AmpNorm}[2]{\mathfrak A_{#1}(#2)}

\title[Compactly supported stationary Navier--Stokes flows]{Compactly Supported Finite-Energy Stationary Solutions of the Three-Dimensional Navier--Stokes Equations}
\author{Xuanxuan ZHAO}
\address{School of Mathematical Sciences, Fudan University,220 Handan Road, Shanghai 200433, China}
\email{xxzhao@math.cuhk.edu.hk}
\subjclass[2020]{35Q30, 35A02, 35D30, 76D05}
\keywords{stationary Navier--Stokes equations, weak solutions, convex integration, compact support, logarithmic Mikado flows, localized h-principle, Bogovskii operator}
\hypersetup{
  pdftitle={Compactly Supported Finite-Energy Stationary Solutions of the Three-Dimensional Navier--Stokes Equations},
  pdfauthor={Xuanxuan Zhao},
  pdfsubject={Compactly supported stationary weak solutions in three dimensions},
  pdfkeywords={stationary Navier--Stokes equations, weak solutions, compact support, convex integration, logarithmic Mikado flows, localized h-principle, Leray-Hopf solutions}
}

\begin{document}

\begin{abstract}
We construct nonzero compactly supported stationary distributional solutions
\[
 u\in L^2(\R^3;\R^3)
\]
of the unforced incompressible Navier--Stokes equations, together with compactly supported pressures in $L^1$, with both norms arbitrarily small.  The construction addresses the three-dimensional finite-energy endpoint at which the usual single-scale intermittent Mikado mechanism loses its algebraic gain against the Laplacian.  The main new ingredient is a logarithmic Mikado profile, built from a truncated two-dimensional harmonic dipole and spread over logarithmically many transverse scales.  The same perturbation yields a localized $h$-principle: nonzero stationary solutions are strongly dense in the localized $L^p$ classes for every $1\le p<2$ and in $H^{-1}$, and weakly dense at $p=2$, while strong $L^2$ density fails because of an exact isotropic quadratic-moment constraint.  We also prescribe arbitrary positive $L^2$ norms, periodize the construction to $\T^3$, and obtain a stationary-versus-Leray nonuniqueness mechanism for the evolutionary equations.
\end{abstract}

\maketitle

\section{Introduction}\label{sec:intro}

We consider the stationary incompressible Navier--Stokes equations with viscosity one on the whole space,
\begin{equation}\label{eq:NS}
 -\Delta u+\diver(u\otimes u)+\nabla p=0,
 \qquad \diver u=0
 \quad\text{in }\R^3.
\end{equation}
For $u\in L^2(\R^3)$ the tensor $u\otimes u$ belongs to $L^1$, and hence the standard distributional formulation is meaningful.

\begin{definition}\label{def:weak}
A vector field $u\in L^2(\R^3;\R^3)$ is a stationary weak solution of \eqref{eq:NS} if $\diver u=0$ in distributions and
\begin{equation}\label{eq:weak-form}
 \int_{\R^3}u\cdot(-\Delta\varphi)\,\dd x
 -\int_{\R^3}(u\otimes u):\nabla\varphi\,\dd x=0
\end{equation}
for every $\varphi\in C_c^\infty(\R^3;\R^3)$ satisfying $\diver\varphi=0$.
\end{definition}

Throughout the statements below, $I$ denotes the $3\times3$ identity matrix and
\[
 \dev A:=A-\frac13(\operatorname{tr}A)I.
\]
We write $C_{c,\sigma}^\infty(\Omega;\R^3)$ for the smooth compactly supported divergence-free vector fields on $\Omega$.  For nonsmooth fields, support means essential support.  The unit torus $\T^3=\R^3/\Z^3$ is equipped with normalized Haar measure, and $C_{\mathrm w}([0,\infty);L^2)$ denotes continuity with respect to the weak $L^2$ topology.

The main result of the manuscript is the following.

\begin{theorem}\label{thm:main}
Let $B\subset\R^3$ be a nonempty open ball.  For every $\eps>0$ there exists a nonzero pair
\[
 u\in L^2(\R^3;\R^3),
 \qquad p\in L^1(\R^3),
\]
such that
\[
 \esssupp u\cup\esssupp p\Subset B,
 \qquad
 \|u\|_{L^2(\R^3)}+\|p\|_{L^1(\R^3)}<\eps,
\]
and
\[
 -\Delta u+\diver(u\otimes u)+\nabla p=0,
 \qquad \diver u=0
\]
in $\calD'(\R^3)$.  In particular, $u$ is a stationary weak solution in the sense of \cref{def:weak}.
\end{theorem}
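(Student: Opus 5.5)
The plan is a single-step (or finitely iterated) convex integration construction localized in $B$, built on the logarithmic Mikado profiles announced in the abstract. Fix a smaller ball $B'\Subset B$ and a cutoff $\chi\in C_c^\infty(B)$ with $\chi\equiv1$ on $B'$. We look for a solution of the form $u=w_p+w_c$, where $w_p$ is a sum of Mikado-type principal parts oriented along finitely many directions $\xi_k$ and $w_c$ is a divergence-free corrector. Because everything is supported in $B$ and divergence-free, the natural tool for the correctors is the Bogovskii operator on $B$ rather than a periodic antidivergence.

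\textbf{Step 1: choose a target stress.} We start from $u_0=0$ and pick a smooth, compactly supported symmetric matrix field $R(x)=\rho(x)I-\mathring R(x)$ with $\rho>0$ on $B'$. Here $\rho$ is chosen so that the geometric lemma gives smooth coefficients $a_k$ with $\sum_k a_k^2\,\xi_k\otimes\xi_k=\rho I-\mathring R$ on the support. The amplitude $\|\rho\|_{L^1}^{1/2}$ sets $\|u\|_{L^2}$, so taking $\rho$ small already forces $\|u\|_{L^2}<\eps$. The quadratic-moment obstruction mentioned in the abstract, $\int |u|^2$ tied to $\int p$, is irrelevant here because we only need existence, and the pressure absorbs the isotropic part. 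Nonzero-ness follows since $\|w_p\|_{L^2}^2\approx\int\operatorname{tr}R>0$.

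\textbf{Step 2: cancel the quadratic term exactly.} The profiles are shear flows along $\xi_k$, with transverse shape given by the truncated 2D harmonic dipole spread over $N\sim\log\lambda$ dyadic transverse scales. These satisfy $\diver(W_k\otimes W_k)=0$ and $-\Delta$-compatibility away from the truncation layers, and they have disjoint supports for distinct $k$. Then
\[
\diver(w_p\otimes w_p)=\diver\Big(\sum_k a_k^2\,\xi_k\otimes\xi_k\Big)+\text{(high-frequency error)}.
\]
The high-frequency error is inverted by a localized antidivergence (Bogovskii composed with the usual inverse-divergence gaining $\lambda^{-1}$). We do not iterate to zero. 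Instead we arrange that the final error $\mathring R_1$ is exactly a gradient plus a term that can be absorbed, or we run the standard iteration $u_q\to u_{q+1}$ with $\|\mathring R_q\|_{L^1}\to0$ and $\sum\|u_{q+1}-u_q\|_{L^2}<\eps$. The limit then solves the equation with $L^1$ pressure $p=\lim p_q$. Supports stay inside $B$ because every cutoff lives in $\chi$'s support, shrinking geometrically.

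\textbf{Step 3, the main obstacle: the linear term $-\Delta w_p$.} At the $L^2$ endpoint in 3D, a single-scale intermittent Mikado with concentration $\mu$ gives $\|\Delta w\|$ after antidivergence of order $\lambda\mu^{1/2}\cdot\mu^{-1}$ times the $L^1$ scaling. This is exactly $O(1)$, with no algebraic gain; that is the failure the abstract describes. The fix is the logarithmic profile. A 2D harmonic dipole is $\Delta$-harmonic away from its core and truncation circle, so the Laplacian only sees the transition layers. Spreading the profile over $N$ scales with amplitude $N^{-1/2}$ per scale keeps the $L^2$ norm fixed. The Laplacian error then picks up only $O(N^{-1/2})$ or $O(1/\log\lambda)$ relative to the energy. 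I would first compute $\|\mathcal R\Delta W\|_{L^1}\lesssim(\log\lambda)^{-1/2}$ for the single logarithmic profile, using that the 2D dipole potential $x/|x|^2$ has $L^2$ mass equidistributed over dyadic annuli. I would then check that the temporal/transport terms vanish (the problem is stationary) and that the interaction error $\diver(w_p\otimes w_c+\dots)$ is $O(\lambda^{-1}\text{poly}\log\lambda)$. Logarithmic rather than polynomial decay of the error suffices, since at each stage we may choose $\lambda_q$ so large that $(\log\lambda_q)^{-1/2}<2^{-q}\delta$. The pressure bound $\|p\|_{L^1}<\eps$ follows from $p_q=-\tfrac13\operatorname{tr}$ of quadratic terms plus Bogovskii-localized corrections, each controlled in $L^1$ by $\|u_{q}\|_{L^2}^2$ and the stresses.
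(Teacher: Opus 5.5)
Your architecture matches the paper's: an $L^1$ Reynolds-stress iteration in a fixed ball, with logarithmic Mikado profiles built from a truncated two-dimensional dipole. However, Step~3, where the proof actually lives, has a scaling error, and with your parametrization the scheme does not close. The bound $\|\mathcal R\Delta W\|_{L^1}\lesssim N^{-1/2}$ holds for the unoscillated periodic profile. What must be inverted, though, is $\Delta\bigl(a(x)W(\lambda x)\bigr)$. Write $\diver_yS=\Delta_yW$ with $\|S\|_{L^1(\T^3)}\lesssim N^{-1/2}$. Then the leading term is
\[
 \lambda^2a\,(\Delta W)(\lambda x)=\diver\bigl(\lambda a\,S(\lambda x)\bigr)-\lambda\,S(\lambda x)\nabla a,
\]
so the viscous stress has size $\lambda N^{-1/2}$: the Laplacian costs two powers of $\lambda$ and the anti-divergence returns only one. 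This is exactly the endpoint loss you describe for single-scale tubes, and a better anti-divergence cannot remove it. Pair $\diver E=\Delta w$ with $\lambda^{-1}b(x)\Phi(\lambda x)$, where $\Phi$ is a transverse linear function near the tube cores; this gives $\|E\|_{L^1}\gtrsim\lambda N^{-1/2}$ for every admissible $E$. With your coupling $N\sim\log\lambda$, the viscous error is therefore of order $\lambda(\log\lambda)^{-1/2}\to\infty$, not $(\log\lambda)^{-1/2}$. The missing idea is to decouple the oscillation frequency $\gamma$ from the logarithmic depth $L$ (inner radius $e^{-L}$). The paper proves $\|\overline R\|_{L^1}\le C(\gamma^{-1}+\gamma L^{-1/2})$ with $C$ independent of $\gamma$ and $L$. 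It then fixes $\gamma$ large and only afterwards takes $L\gg\gamma^2$, i.e.\ a core radius $e^{-L}\ll e^{-\gamma^2}$.

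Two further steps fail as stated. First, your plan only treats the unmodulated profile, but the modulation produces the cross term $2\lambda(\nabla a\cdot\nabla_y\psi)(\lambda x)k$. This term cannot be estimated through derivatives of the profile. For the normalized dipole, $\|\nabla\psi\|_{L^1}\sim N^{1/2}$, since each of the $N$ dyadic shells contributes $N^{-1/2}$. A $\lambda^{-1}$-gaining anti-divergence estimate therefore leaves an error of order $N^{1/2}$, and the logarithmic gain is lost. The paper absorbs this term exactly with the symmetric trace-free tensor $2\psi(\lambda x)(k\otimes\beta+\beta\otimes k)$, where $\beta$ is the projection of $\nabla a$ onto $k^\perp$, using $k\cdot\nabla_y\psi=0$ (\cref{lem:modulated-viscosity}). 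As a result only $\|\psi\|_{L^1}$ and $\|S\|_{L^1}$ enter. Second, the Bogovskii operator solves a scalar divergence equation, so applied row by row it produces non-symmetric stresses. The amplitude decomposition $\rho I-R=\sum_k a_k^2k\otimes k$ requires $R$ to be symmetric. A compactly supported symmetric anti-divergence with $L^1$ bounds exists only under the six force and torque conditions (\cref{lem:local-sym-div}). These conditions hold automatically for divergences of compactly supported symmetric tensors (\cref{lem:auto-compat}). A minor point: nontriviality of the limit also requires controlling the later increments. You can take $\delta_1\ll\|w_1\|_{L^2}^2$ so that $\sum_{q\ge1}\|w_{q+1}\|_{L^2}<\|w_1\|_{L^2}$, or, as in the paper, impose a pairing constraint against the seed.
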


\subsection{Further main results}\label{subsec:further-results}

The perturbation estimate contains considerably more flexibility than is needed for the existence statement alone.  We state the principal consequences here and defer their proofs to \cref{sec:flexibility,sec:affine-energy,sec:periodic-dynamic}.

For a ball $B\subset\R^3$ and $1\le r\le2$, set
\[
 \mathcal X_r(B)
 :=\overline{C_{c,\sigma}^\infty(B;\R^3)}^{\,L^r(\R^3)},
 \qquad
 \mathcal X_{-1}(B)
 :=\overline{C_{c,\sigma}^\infty(B;\R^3)}^{\,H^{-1}(\R^3)}.
\]
Let $\mathscr S(B)$ denote the set of velocities $u\in L^2(\R^3;\R^3)$ for which there exists a pressure $p\in L^1(\R^3)$ such that
\[
 \esssupp u\cup\esssupp p\Subset B
\]
and $(u,p)$ solves \eqref{eq:NS} in distributions.  Every $u\in\mathscr S(B)$ belongs to $\mathcal X_2(B)$: since its support is compactly contained in $B$, standard mollification preserves divergence and, for sufficiently small mollification scale, keeps the support inside $B$.

\begin{theorem}[Localized flexibility and endpoint rigidity]\label{thm:flexibility-rigidity}
Let $B\subset\R^3$ be a nonempty open ball.
\begin{enumerate}[label=\textup{(\roman*)}]
 \item For every $1\le r<2$, the nonzero elements of $\mathscr S(B)$ are strongly dense in $\mathcal X_r(B)$.
 \item The nonzero elements of $\mathscr S(B)$ are strongly dense in $\mathcal X_{-1}(B)$ and weakly dense in $\mathcal X_2(B)$.
 \item Every $u\in\mathscr S(B)$ satisfies
 \begin{equation}\label{eq:isotropic-moment}
  \int_{\R^3}u\otimes u\,\dd x
  =\frac13\|u\|_{L^2(\R^3)}^2I.
 \end{equation}
 Consequently, the strong $L^2$ closure of $\mathscr S(B)$ is contained in
 \[
  \left\{v\in\mathcal X_2(B):
  \dev\int_{\R^3}v\otimes v\,\dd x=0\right\},
 \]
 and $\mathscr S(B)$ is not strongly dense in $\mathcal X_2(B)$.
\end{enumerate}
\end{theorem}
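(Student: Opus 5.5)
The plan has two independent halves: the rigidity statement (iii), which is an exact identity, and the density statements (i)–(ii), which come from a localized convex-integration step.

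\emph{Rigidity (iii).} Let $(u,p)\in\mathscr S(B)$. Since $u$ and $p$ have compact support, the equation can be tested against non-divergence-free fields. For a fixed matrix $A$, take $\varphi(x)=Ax\,\chi(x)$ with $\chi\in C_c^\infty$ equal to $1$ near $\esssupp u\cup\esssupp p$. On that set $\nabla\varphi=A$ and $\Delta\varphi=0$, so the distributional identity gives
\[
 -\int u\otimes u:A\,\dd x-\int p\,\operatorname{tr}A\,\dd x=0 .
\]
We also have $\int u\,\dd x=0$; this follows from $\diver u=0$ by testing with $x_i\chi$, but it is not needed here. Taking $A$ trace-free gives $\dev\int u\otimes u=0$. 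Taking the trace, $\int u\otimes u$ is therefore a multiple of $I$ with trace $\|u\|_2^2$, which is \eqref{eq:isotropic-moment}. This moment is continuous in $L^2$, so the constraint passes to strong $L^2$ limits. To see that $\mathscr S(B)$ is not strongly dense, exhibit $v\in C^\infty_{c,\sigma}(B)$ with $\dev\int v\otimes v\neq0$, for instance a field depending mostly on a shear direction such as $v=\nabla\times(0,0,\psi)$ with $\psi$ strongly anisotropic. Then $\int v_3^2=0$ while $\int(v_1^2+v_2^2)>0$.

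\emph{Flexibility (i)–(ii).} By density it suffices to approximate a fixed $v\in C^\infty_{c,\sigma}(B)$. I would write $v$ as a solution of the stationary Navier--Stokes–Reynolds system $-\Delta v+\diver(v\otimes v)+\nabla q=\diver R$, with $R$ compactly supported in $B$, by setting $R=\mathcal R(-\Delta v+\diver(v\otimes v))$ for a compactly supported inverse divergence (Bogovskii-type). The required compatibility conditions hold because the forcing integrates to zero against affine fields after a suitable pressure is added; if needed, I absorb the nontrivial moments into the pressure.

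I would then invoke the paper's perturbation estimate, the logarithmic Mikado iteration that proves \cref{thm:main}. I am assuming its flexible form: given a smooth compactly supported subsolution $(v,q,R)$ and $\delta>0$, it produces an exact solution $u=v+w$ supported in $B$ such that $\|w\|_{L^r}<\delta$ for every $r<2$, $\|w\|_{H^{-1}}<\delta$, and $\|w\|_{L^2}$ bounded by roughly $\|R\|_{L^1}^{1/2}$. Intermittency, i.e.\ concentration, gives the smallness in $L^r$ for $r<2$. High frequency gives smallness in $H^{-1}$ and weak convergence to $0$ in $L^2$, since the perturbations are bounded in $L^2$ and tend to $0$ in $H^{-1}$. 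To guarantee the solution is nonzero, I add a tiny nonzero seed solution from \cref{thm:main}, placed disjointly, or I use $\|u\|_{L^2}\gtrsim\|R\|^{1/2}>0$.

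\emph{Main obstacle.} The hard step is checking that the iteration converges with small $L^r$, $r<2$, and $H^{-1}$ norms starting from an arbitrary smooth compactly supported $v$ rather than from $0$, while keeping all supports inside $B$. In particular, the compactly supported inverse divergence must respect the moment constraints from (iii) at every stage, so the pressure must be adjusted to absorb the isotropic part.
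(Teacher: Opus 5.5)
Your part (iii) is correct. Testing the pressure form of the equation with $\chi(x)Ax$ is the argument of \cref{thm:full-affine-identity}, whereas the paper's proof of (iii) tests the pressure-free weak form with a solenoidal extension of $Ax$, $\operatorname{tr}A=0$; your counterexample is the paper's. Two other pieces also work once an iteration with uniformly $L^2$-bounded increments is in place: your nontriviality device (add a tiny nonzero solution from \cref{thm:main} with disjoint support), and your weak-$L^2$ argument from $L^2$-boundedness plus $H^{-1}$-smallness. Your other option for nontriviality, a lower bound $\|u\|_{L^2}\gtrsim\|R\|^{1/2}$, is not supplied: \cref{prop:perturbation} gives only upper bounds.

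\textbf{The gap: the assumed flexible form.} The gap is in (i) and in the $H^{-1}$ part of (ii). The \emph{flexible form} you assume is the entire content of these statements, and as you state it, it is false. Since $w$ has bounded support, dominated convergence gives $\|w\|_{L^r}\to\|w\|_{L^2}$ as $r\to2^-$. Hence $\|w\|_{L^r}<\delta$ for every $r<2$ forces $\|w\|_{L^2}\le\delta$, which is strong $L^2$ density. Your own part (iii) excludes this whenever $\dev\int v\otimes v\ne0$; for such targets the increment is bounded below in $L^2$. Your heuristic that concentration gives smallness for $r<2$ is right, but concentration only makes $\|w\|_{L^1}$ small at \emph{fixed} $L^2$ size. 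The iteration that converts this into $L^r$-smallness is exactly what you leave open as your ``main obstacle''.

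\textbf{The missing mechanism: iterate with decoupled $L^1$ and $L^2$ controls.} The key is that \cref{prop:perturbation} decouples the $L^1$ and $L^2$ controls. Each step gives $\|w\|_{L^2}\le\sqrt{12}\,\|R\|_{L^1}^{1/2}+\eta_2$, which is not small. It also gives $\|w\|_{L^1}<\eta_1$ and $\|w\|_{H^{-1}}<\eta_{-1}$, with tolerances chosen independently; they only force $L$ larger. The iteration runs as follows.
\begin{enumerate}[label=\textup{(\alph*)}]
 \item Start from the explicit seed $A_0=-\nabla v_0-(\nabla v_0)^T+v_0\otimes v_0$, with $R_0=\dev A_0$ and $p_0=-\frac13\operatorname{tr}A_0$. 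No inverse divergence is needed. Work in a ball $U\Subset B$ containing $\supp v_0$; running in $U$ rather than $B$ is what gives $\esssupp u\Subset B$.
 \item Take $\delta_q=\delta_02^{-4q}$, so that $M_q=\sqrt{12}\,\delta_q^{1/2}+2^{-q-3}$ is summable. The resulting strong $L^2$ convergence is what passes $u_q\otimes u_q$ to the limit.
 \item Fix $r\in(1,2)$ and set $\vartheta=\frac2r-1$. Prescribe $\|w_{q+1}\|_{L^1}<\ell_q$ with $\ell_q^\vartheta M_q^{1-\vartheta}<2^{-q-2}\eps$. Interpolation then gives $\|u-v_0\|_{L^r}\le\sum_q\|w_{q+1}\|_{L^1}^\vartheta\|w_{q+1}\|_{L^2}^{1-\vartheta}<\eps/2$.
 \item For the $H^{-1}$ statement, prescribe $\|w_{q+1}\|_{H^{-1}}<2^{-q-2}\eps$ instead.
\end{enumerate}
The $\ell_q$ depend on $r$, which is why no statement uniform in $r$ is possible.

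\textbf{The obstacle you name is not real.} A symmetric anti-divergence only needs force and torque compatibility. This holds automatically for divergences of compactly supported symmetric tensors (\cref{lem:auto-compat}). Orthogonality to all affine fields is neither needed nor true: even after adding a pressure, it would require $\int v\otimes v$ to be isotropic. Likewise, the identity $\int(u_q\otimes u_q+p_qI-R_q)=0$ holds automatically for every compactly supported Reynolds solution, so no moment constraint has to be enforced along the iteration.
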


\begin{proposition}[Full affine stress identity]\label{thm:full-affine-identity}
Let $u\in L^2(\R^3;\R^3)$ and $p\in L^1(\R^3)$ be compactly supported and solve \eqref{eq:NS} in distributions.  Then
\begin{equation}\label{eq:full-affine-identity}
 \int_{\R^3}\bigl(u\otimes u+pI\bigr)\,\dd x=0.
\end{equation}
In particular,
\begin{equation}\label{eq:pressure-mass}
 \int_{\R^3}u\otimes u\,\dd x
 =\frac13\|u\|_2^2I,
 \qquad
 \int_{\R^3}p\,\dd x=-\frac13\|u\|_2^2,
\end{equation}
and hence
\begin{equation}\label{eq:pressure-lower-bound}
 \|p\|_{L^1(\R^3)}\ge\frac13\|u\|_{L^2(\R^3)}^2.
\end{equation}
\end{proposition}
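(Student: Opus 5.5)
The plan is to test the distributional equation against vector fields that are affine on the compact support. Write the equation as $\diver T=0$ in $\calD'(\R^3)$ with
\[
T:=-\nabla u+u\otimes u+pI,
\]
using the row-divergence convention $(\diver T)_i=\partial_j T_{ij}$. Every entry of $T$ lies in $\mathcal D'$: $\nabla u$ is a distribution of order one, while $u\otimes u$ and $pI$ are in $L^1$. All three are supported in a fixed compact set $K$.

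Fix indices $k,l$. Choose $\chi\in C_c^\infty(\R^3)$ with $\chi\equiv1$ on a neighborhood of $K$, and set $\varphi(x):=\chi(x)\,x_l e_k$. Pairing $\diver T=0$ with $\varphi$ gives $\langle T_{ij},\partial_j\varphi_i\rangle=0$. Since $\partial_j\varphi_i=\delta_{ik}\delta_{jl}$ on a neighborhood of $K\supset\supp T$, the cutoff derivatives contribute nothing. Hence
\[
\langle T_{kl},\chi\rangle=0,
\]
where $\chi$ may be replaced by any test function equal to one near $K$.

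Next evaluate each piece of $T$ separately. For the $L^1$ parts, $\langle u_ku_l+p\delta_{kl},\chi\rangle=\int(u_ku_l+p\delta_{kl})\,\dd x$. For the viscous part, $\langle\partial_l u_k,\chi\rangle=-\int u_k\,\partial_l\chi\,\dd x=0$, because $\partial_l\chi$ vanishes on a neighborhood of $\supp u$. This gives \eqref{eq:full-affine-identity}.

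The only delicate point is that $u$ is merely $L^2$, so the viscous term must be handled distributionally rather than by integrating $\nabla u$. Moving the derivative onto $\chi$, as above, settles it. Divergence-freeness is not needed beyond making the equation meaningful. Alternatively, one can test with general smooth fields $\varphi$ without a divergence-free restriction, because the pressure is given and the equation holds in full $\calD'$.

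For the consequences, take the off-diagonal entries and the traceless part of \eqref{eq:full-affine-identity}. Since $pI$ is isotropic, this yields $\dev\int u\otimes u=0$, so $\int u\otimes u=\frac13\operatorname{tr}\bigl(\int u\otimes u\bigr)I=\frac13\|u\|_2^2I$. Taking the trace then gives $\|u\|_2^2+3\int p=0$, which is \eqref{eq:pressure-mass}. Finally, $\|p\|_{L^1}\ge\left|\int p\right|=\frac13\|u\|_2^2$, which is \eqref{eq:pressure-lower-bound}.
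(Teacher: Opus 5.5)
Your proposal is correct and follows essentially the same route as the paper: test the full distributional equation against a compactly supported field equal to an affine map $Ax$ near the supports. You take $A=e_k\otimes e_l$ where the paper takes a general $A$, so the viscous term drops out and the trace/deviatoric split gives \eqref{eq:pressure-mass} and \eqref{eq:pressure-lower-bound}. Your explicit treatment of the viscous term, moving the derivative onto the cutoff, spells out a step the paper leaves implicit.
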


\begin{corollary}[Prescribed energy]\label{thm:exact-energy}
Let $\Omega\subset\R^3$ be a nonempty open set and let $E>0$.  There are uncountably many stationary pairs $(u,p)$ with
\[
 \esssupp u\cup\esssupp p\Subset\Omega,
 \qquad p\in L^1(\R^3),
 \qquad \|u\|_{L^2(\R^3)}=E.
\]
For a family produced in the proof, there is a constant $C_\Omega>0$, independent of $E$, such that
\begin{equation}\label{eq:pressure-energy-bounds}
 \frac13E^2\le\|p\|_{L^1(\R^3)}\le C_\Omega E^2.
\end{equation}
\end{corollary}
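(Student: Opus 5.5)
We argue by scaling, starting from \cref{thm:main}. The Navier--Stokes equations \eqref{eq:NS} are invariant under
\[
 u_\lambda(x)=\lambda u(\lambda x),\qquad p_\lambda(x)=\lambda^2p(\lambda x),
\]
and also under translations. In three dimensions,
\[
 \|u_\lambda\|_{L^2}^2=\lambda^2\lambda^{-3}\|u\|_2^2=\lambda^{-1}\|u\|_2^2,
 \qquad
 \|p_\lambda\|_{L^1}=\lambda^{2-3}\|p\|_1=\lambda^{-1}\|p\|_1 .
\]
Hence the ratio $\|p_\lambda\|_1/\|u_\lambda\|_2^2=\|p\|_1/\|u\|_2^2$ does not depend on $\lambda$, while the energy can be moved anywhere in $(0,\infty)$.

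\textbf{Construction.} Fix a ball $B_0\Subset\Omega$ of radius $\rho$. Apply \cref{thm:main} to the unit ball $B_1$ with $\eps=1$. This gives a nonzero pair $(u_0,p_0)$ supported in $B_1$, with $\|u_0\|_2>0$ and $\|p_0\|_1<1$.

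To reach energy $E$, set $\lambda=\|u_0\|_2^2/E^2$. If $\lambda\ge1/\rho$, then $(u_\lambda,p_\lambda)$, translated to the center of $B_0$, is supported in a ball of radius $1/\lambda\le\rho$, so it lies inside $B_0$. It then has $\|u_\lambda\|_2=E$.

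If $\lambda<1/\rho$ (large $E$), the dilated support is too big. In that case place $N$ disjointly supported translated copies of a smaller rescaling inside $B_0$. Their sum is still a solution, because the nonlinearity acts locally on disjoint supports. The energies add: $N$ copies at scale $\mu$ give energy $N\mu^{-1}\|u_0\|_2^2$ and pressure norm $N\mu^{-1}\|p_0\|_1$.

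Copies at scale $\mu$ (support radius $1/\mu$) fit in $B_0$ with $N\sim(\rho\mu)^3$. The total energy is then about $\rho^3\mu^2\|u_0\|_2^2$, which can be made large. To hit exactly $E$, first choose $N$, then choose $\mu$ continuously so that $N\mu^{-1}\|u_0\|_2^2=E^2$, keeping the packing constraint satisfied. For example, take $N$ minimal such that the required $\mu=N\|u_0\|_2^2/E^2$ satisfies $N\le c(\rho\mu)^3$; this is possible because the constraint becomes $N^2\ge c^{-1}\rho^{-3}E^6\|u_0\|_2^{-6}$.

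\textbf{Bounds and uncountability.} In every case,
\[
 \|p\|_1=\frac{\|p_0\|_1}{\|u_0\|_2^2}\,E^2,
\]
so $C_\Omega:=\|p_0\|_1/\|u_0\|_2^2$ works. This constant is independent of $E$ and even of $\Omega$ once $(u_0,p_0)$ is fixed. The lower bound $\tfrac13E^2\le\|p\|_1$ is exactly \eqref{eq:pressure-lower-bound} of \cref{thm:full-affine-identity}.

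For uncountability, translate the configuration by any small vector $a$ such that the supports stay compactly inside $\Omega$. Distinct $a$ give distinct $u$, since $u\neq0$ has compact support and a nonzero compactly supported function cannot equal its own nontrivial translate. This yields a continuum of solutions, all with the same energy and the same bounds.

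The only point requiring care is the large-$E$ packing bookkeeping. I expect no real obstacle there, since scaling invariance makes everything exact.
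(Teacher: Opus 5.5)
Your proposal is correct and follows essentially the same route as the paper. You rescale a fixed solution from \cref{thm:main} and pack $N$ disjointly supported copies at scale $\mu=N\|u_0\|_2^2/E^2$, which gives the exact ratio $\|p\|_1=(\|p_0\|_1/\|u_0\|_2^2)E^2$; you take the lower bound from \eqref{eq:pressure-lower-bound} and get uncountably many solutions from small translations. The differences are cosmetic. The paper takes $N=m^3$ grid copies for every $E$ at once instead of splitting into small- and large-$E$ cases. It also translates only one copy rather than the whole configuration, and both versions of the translation argument work.
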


\begin{corollary}[Periodic endpoint solutions]\label{cor:periodic-endpoint}
For every $E>0$ there exists a nonconstant zero-mean stationary solution
\[
 u\in L^2(\T^3;\R^3),
 \qquad p\in L^1(\T^3),
 \qquad \|u\|_{L^2(\T^3)}=E.
\]
In particular, nonzero periodic stationary solutions exist with arbitrarily small $L^2$ norm.  Moreover, for every $1\le r<2$, a periodic solenoidal target supported in a ball compactly contained in a fundamental cube can be approximated strongly in $L^r(\T^3)$ by nonzero periodic stationary solutions.
\end{corollary}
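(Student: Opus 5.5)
We obtain \cref{cor:periodic-endpoint} by periodizing the compactly supported solutions of \cref{thm:exact-energy} and \cref{thm:flexibility-rigidity}; no new construction is needed. Fix the fundamental cube $Q=[0,1)^3$ and an open ball $B$ with $\overline B\subset(0,1)^3$. By \cref{thm:exact-energy} with $\Omega=B$, there is a pair $(u,p)$ with $\esssupp u\cup\esssupp p\Subset B$, $p\in L^1(\R^3)$, $\|u\|_{L^2(\R^3)}=E$, solving \eqref{eq:NS} in $\calD'(\R^3)$. Define the periodizations
\[
 \tilde u(x):=\sum_{k\in\Z^3}u(x+k),\qquad \tilde p(x):=\sum_{k\in\Z^3}p(x+k).
\]
Because the supports lie in the interior of $Q$, the translates are pairwise disjoint. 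Hence each sum has exactly one nonzero term almost everywhere, and $\tilde u\otimes\tilde u=\sum_k (u\otimes u)(\cdot+k)$. Since Haar measure on $\T^3$ is normalized and $|Q|=1$, we get $\|\tilde u\|_{L^2(\T^3)}=\|u\|_{L^2(\R^3)}=E$ and $\tilde p\in L^1(\T^3)$.

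Next we check the equation on the torus. Let $\psi\in C^\infty(\T^3;\R^3)$, viewed as a $\Z^3$-periodic function on $\R^3$. Choose a cutoff $\chi\in C_c^\infty((0,1)^3)$ with $\chi\equiv1$ on a neighborhood of $\overline B$. Pairing the periodic equation with $\psi$ over $Q$ is then the same as pairing the $\R^3$ equation with the compactly supported test function $\chi\psi$. The cutoff $\chi$ is constant on the supports of $u$, $u\otimes u$ and $p$, so every term agrees. Thus
\[
 -\Delta\tilde u+\diver(\tilde u\otimes\tilde u)+\nabla\tilde p=0,\qquad \diver\tilde u=0\quad\text{in }\calD'(\T^3).
\]

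It remains to show that $\tilde u$ is nonconstant with zero mean. First, $\int_{\T^3}\tilde u=\int_{\R^3}u\,\dd x$. This vanishes for any compactly supported divergence-free $L^2$ field: test $\diver u=0$ against $x_i\eta(x)$, where $\eta\equiv1$ near the support of $u$, which gives $\int u_i\,\dd x=0$. Second, $\tilde u\neq0$ because $E>0$. A zero-mean nonzero field cannot be constant, so $\tilde u$ is nonconstant. Letting $E\downarrow0$ gives periodic solutions of arbitrarily small norm.

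For the approximation statement, let the target $v$ be periodic, solenoidal, and supported in the periodization of a ball $B'\Subset Q$; choosing $B'$ slightly larger if necessary, we may assume $v|_Q\in\mathcal X_r(B')$. \cref{thm:flexibility-rigidity}(i) then supplies nonzero $u_j\in\mathscr S(B')$ with $u_j\to v|_Q$ in $L^r(\R^3)$. Their periodizations are nonzero periodic stationary solutions by the argument above. Since periodization is an isometry from $L^r$ functions supported in $Q$ to $L^r(\T^3)$, we get $\tilde u_j\to v$ in $L^r(\T^3)$.

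The only point requiring care is that the target really lies in $\mathcal X_r(B')$. Its restriction has compact support inside $Q$ and is divergence-free in $\calD'(\R^3)$. Mollifying at a scale smaller than $\operatorname{dist}(\partial B',\operatorname{supp} v)$ gives fields in $C_{c,\sigma}^\infty(B')$ that converge to $v|_Q$ in $L^r$, as in the remark preceding \cref{thm:flexibility-rigidity}. I expect no genuine obstacle beyond this bookkeeping.
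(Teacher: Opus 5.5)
Your proposal is correct and follows essentially the same route as the paper. You periodize the compactly supported whole-space solutions from \cref{thm:exact-energy} and \cref{thm:flexibility-rigidity}(i), placed in a ball compactly contained in a fundamental cube, exactly as in \cref{prop:periodization}, with the zero mean supplied by the argument of \cref{lem:zero-mean}; the only difference is that you check the periodic equation inline by testing the whole-space equation against $\chi\psi$ instead of summing translated distributional identities, which is an equivalent bookkeeping step.
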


\begin{corollary}[Stationary-versus-Leray nonuniqueness]\label{thm:stationary-leray}
There exist arbitrarily small, as well as exactly prescribed-energy, initial data in $L^2(\R^3)$ for which the unforced evolutionary Navier--Stokes equations admit at least two distinct global distributional weak solutions in
\[
 L^\infty\bigl([0,\infty);L^2(\R^3;\R^3)\bigr)
 \cap C_{\mathrm w}\bigl([0,\infty);L^2(\R^3;\R^3)\bigr).
\]
One branch is stationary and the other is a Leray--Hopf solution; the two branches are distinct at every positive time, and the Leray--Hopf branch $U^{\rm L}$ satisfies
\begin{equation}\label{eq:strict-energy-drop}
 \|U^{\rm L}(t)\|_{L^2}<\|u_{\rm in}\|_{L^2}
 \qquad\text{for every }t>0.
\end{equation}
The same conclusion holds on $\T^3$.  For every ball $B\subset\R^3$ and every $1\le r<2$, initial data generating this mechanism are strongly dense in $\mathcal X_r(B)$; at the endpoint $r=2$ they are weakly dense in $\mathcal X_2(B)$.
\end{corollary}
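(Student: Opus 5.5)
The idea is to use a nonzero stationary pair $(u_\ast,p_\ast)$ from \cref{thm:main} (or from \cref{thm:exact-energy}) as initial datum $u_{\rm in}:=u_\ast$. The constant-in-time branch $U^{\rm S}(t):=u_\ast$ then solves the evolutionary equations distributionally on $(0,\infty)\times\R^3$ with pressure $p_\ast$. It obviously lies in $L^\infty([0,\infty);L^2)\cap C_{\rm w}([0,\infty);L^2)$, and its energy is conserved: $\|U^{\rm S}(t)\|_2=\|u_\ast\|_2$. The second branch $U^{\rm L}$ is any Leray--Hopf solution with the same datum, given by Leray's theorem. It satisfies the energy inequality
\[
 \|U^{\rm L}(t)\|_2^2+2\int_0^t\|\nabla U^{\rm L}\|_2^2\,\dd s\le\|u_{\rm in}\|_2^2 .
\]

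The key step is \eqref{eq:strict-energy-drop}, which I will show by contradiction. Suppose $\|U^{\rm L}(t_0)\|_2=\|u_{\rm in}\|_2$ for some $t_0>0$. The energy inequality then forces $\nabla U^{\rm L}=0$ a.e. on $(0,t_0)\times\R^3$. An $L^2$ field with zero gradient vanishes, so $U^{\rm L}(t)=0$ for a.e. $t<t_0$. Weak continuity at $t=0$ then gives $u_{\rm in}=0$, which is a contradiction. Hence the energy drops strictly at every positive time. Since $\|U^{\rm S}(t)\|_2=\|u_{\rm in}\|_2>\|U^{\rm L}(t)\|_2$, the two branches differ at every $t>0$.

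Smallness comes directly from the $\eps$ in \cref{thm:main}, and an exactly prescribed energy $E$ comes from \cref{thm:exact-energy}. The torus version is the same argument with \cref{cor:periodic-endpoint}. There the constants are excluded because the solution has zero mean, so Poincar\'e's inequality still turns $\nabla U^{\rm L}=0$ into $U^{\rm L}=0$. The density claims are inherited: by \cref{thm:flexibility-rigidity}(i)--(ii), nonzero elements of $\mathscr S(B)$ are strongly dense in $\mathcal X_r(B)$ for $r<2$ and weakly dense in $\mathcal X_2(B)$. Each such element generates the mechanism above.

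The main point needing care is that the stationary branch satisfies the time-dependent distributional formulation including the initial datum. One tests against $\psi(t,x)$ and integrates the stationary identity in $t$; the term $\int u_\ast\cdot\partial_t\psi$ then produces exactly the initial-datum term. Everything else is routine.
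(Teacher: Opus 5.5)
Your proposal is correct and follows the paper's proof essentially step for step: the constant-in-time branch $U^{\rm st}=u_{\rm in}$ and a weakly continuous Leray--Hopf branch; the strict energy drop by contradiction (equality at $t_0$ forces $\nabla U^{\rm L}=0$ on $(0,t_0)$, hence $U^{\rm L}=0$ there, which contradicts weak continuity at $t=0$); and density inherited from the localized flexibility theorem. The only point the paper states more explicitly is on $\T^3$, where $U^{\rm L}(s)$ has zero mean because the spatial mean of a periodic weak solution is conserved and the periodized datum has zero mean; your Poincar\'e step uses exactly this fact.
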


Any solution in \cref{thm:main} must lie outside the energy class.  Indeed, if $u\in H^1(\R^3)$ were a stationary weak solution, then $u\in L^4(\R^3)$ by interpolation and $C_{c,\sigma}^\infty(\R^3)$ is dense in the solenoidal subspace of $H^1$.  Approximating $u$ by admissible test fields in \eqref{eq:weak-form} gives
\[
 \int_{\R^3}|\nabla u|^2\,\dd x
 -\int_{\R^3}(u\otimes u):\nabla u\,\dd x=0.
\]
The second integral vanishes by incompressibility, and hence $u=0$.  No such self-testing argument is available for a bare $L^2$ distributional solution.

The proof of \cref{thm:main} is based on the method of convex integration.  The origins of this method lie in differential geometry, beginning with Nash's construction of flexible $C^1$ isometric embeddings and Gromov's subsequent theory of convex integration for underdetermined differential relations and the $h$-principle \cite{Nash1954,Gromov1973,Gromov1986}.  Its characteristic feature is the use of rapidly oscillating perturbations to eliminate a nonlinear defect while retaining only weak control of the approximate solutions.  This geometric flexibility mechanism has become a central tool in the study of nonuniqueness, anomalous dissipation, and critical regularity for nonlinear PDEs.

In incompressible fluid dynamics, highly irregular weak Euler flows had already appeared in the works of Scheffer and Shnirelman \cite{Scheffer1993,Shnirelman1997}.  A systematic convex-integration approach was initiated by De Lellis and Sz\'ekelyhidi, who reformulated the Euler equations as a differential inclusion and developed a quantitative flexibility theory for weak solutions \cite{DLS2009}.  Subsequent works connected this theory with Onsager's conjecture and progressively improved the regularity of dissipative Euler flows \cite{DLS2013,DLS2014,BDIS2015}.  Daneri and Sz\'ekelyhidi introduced the stationary perturbation profiles now known as Mikado flows \cite{DaneriSzekelyhidi2017}, which became an important ingredient in later high-regularity schemes.  Isett ultimately proved the flexible part of Onsager's conjecture by constructing dissipative Euler solutions in $C^\alpha$ for every $\alpha<1/3$ \cite{Isett2018}.

The method was subsequently adapted to equations with dissipation.  Buckmaster and Vicol introduced highly intermittent building blocks to construct nonunique finite-energy weak solutions of the three-dimensional Navier--Stokes equations \cite{BuckmasterVicol2019}.  In the viscous setting, oscillation must be combined with concentration in order to make the Laplacian perturbative.  This principle led to nonuniqueness for hyperviscous equations below the Lions exponent and to sharp results near the Ladyzhenskaya--Prodi--Serrin threshold \cite{LuoTiti2020,CheskidovLuo2022}.  Whole-space convex-integration constructions have subsequently established both finite-energy and sharp nonuniqueness results on $\R^3$ \cite{MiaoNieYeFiniteEnergy2024,MiaoNieYe2024}.  Nonuniqueness of Leray solutions has been established by a different instability mechanism for the forced Navier--Stokes problem \cite{AlbrittonBrueColombo2022}.

Convex integration has continued to develop rapidly in recent years, both through new perturbative architectures and through applications to a growing range of nonlinear PDEs.  Recent developments include flexibility for two-dimensional Euler flows with integrable vorticity \cite{BrueColomboKumar2024}, the continuity equation beyond the Sobolev embedding threshold \cite{ColomboColomboKumar2025}, global dissipative solutions for the three-dimensional Navier--Stokes and MHD equations \cite{CheskidovZengZhang2025}, Onsager-type results and dissipative constructions for ideal MHD \cite{MiaoNieYe2025,EncisoPenafielPeraltaMHD2025,GiardiSzekelyhidi2026}, topology-preserving convex integration for steady three-dimensional Euler flows and extension results for weak Euler solutions \cite{EncisoPenafielPeraltaSteady2025,EncisoPenafielPeraltaExtension2024}, the Newton--Nash approach to the two-dimensional Onsager conjecture \cite{GiriRadu2024}, convex-integration solutions of periodic gKdV \cite{GismondiMaPathakRadu2026}, Onsager-type and nonuniqueness results for SQG and more general active scalar equations \cite{ZhaoActiveScalar2024,DaiGiriRadu2024,LooiIsett2024,ChengKwonLi2021}, non-conservation of generalized helicity for Euler flows \cite{GiriKwonNovack2026}, nonuniqueness phenomena in nonlinear elastodynamics \cite{MaoQuLame2025,MaoQuElastodynamics2025}, and stationary or hypo-dissipative MHD constructions \cite{QuZhang2026,LiQu2026}.  These works illustrate the continuing expansion of convex integration beyond its original Euler setting and the development of new mechanisms for overcoming critical regularity and scaling barriers.

The stationary Navier--Stokes problem presents a different form of the flexibility question, since temporal intermittency, gluing in time, and other dynamical mechanisms are unavailable.  Luo established the first stationary finite-energy convex-integration construction for the unforced Navier--Stokes equations on $\T^d$ in dimensions $d\ge4$ \cite{Luo2019}.  In fact, his construction yields
\[
 u\in H^\beta(\T^d)
 \qquad\text{for every }\beta<\frac1{200},
\]
and therefore gives slightly more regularity than finite energy alone.  Recent work of Cheskidov and Hou constructs stationary singular solutions in lower-regularity regimes \cite{CheskidovHou2026}, while Fujii's whole-space work is formulated in scaling-critical Besov classes \cite{Fujii2026}.

Very recently, Fujii constructed non-unique stationary solutions in $L^2(\mathbb R^2)$ by exploiting the additional freedom of choosing a suitable arbitrarily small external force \cite{FujiiL2}. More precisely, his construction first produces a nontrivial Oseen mode around a small background flow and then uses the remaining equation to determine the common forcing. In comparison, the method developed in the present paper is sufficiently robust to close the construction in the genuinely unforced setting $f=0$, without relying on this additional degree of freedom.

These results demonstrate substantial stationary flexibility but do not provide the compactly supported ordinary $L^2(\R^3)$ solution considered here.

\section{Overview of the proof and the endpoint mechanism}

We now explain the mechanism behind Theorem \ref{thm:main} and, in particular, why the three-dimensional $L^2$ case is an endpoint for the usual intermittent Mikado construction.  The key point is that, at this scaling, ordinary single-scale concentration no longer produces a small viscous stress.  Our construction replaces this missing algebraic gain by a logarithmic one, obtained from a multiscale transverse profile.  We first describe the endpoint obstruction for standard Mikado tubes and then explain how the logarithmic profile overcomes it.

The endpoint obstruction can already be seen from the elementary scaling of an ordinary intermittent Mikado tube.  Consider an $L^2$-normalized tube in dimension $d$, concentrated in the $d-1$ transverse directions at scale $\mu^{-1}$.  Its typical amplitude and support volume satisfy
\[
 |W_\mu|\sim\mu^{(d-1)/2},
 \qquad
 |\supp W_\mu|\sim\mu^{-(d-1)},
\]
so that
\[
 \|W_\mu\|_{L^2}\sim1,
 \qquad
 \|W_\mu\|_{L^1}\sim\mu^{-(d-1)/2}.
\]
The viscous term is more expensive.  For a standard single-scale Mikado profile, the natural stress representing $\Delta W_\mu$ has $L^1$ size on the scale of one spatial derivative of the profile, namely
\[
 \|\nabla W_\mu\|_{L^1}
 \sim
 \mu^{1-\frac{d-1}{2}}
 =\mu^{\frac{3-d}{2}}.
\]
Thus the usual single-scale intermittency mechanism produces an algebraic viscous gain in dimensions $d\ge4$, whereas this gain disappears exactly in dimension three.

Indeed, when $d=3$,
\[
 \|W_\mu\|_{L^2}\sim1,
 \qquad
 \|W_\mu\|_{L^1}\sim\mu^{-1},
 \qquad
 \|\nabla W_\mu\|_{L^1}\sim1.
\]
Increasing the concentration parameter therefore does not make the stress associated with the Laplacian smaller.  The three-dimensional finite-energy stationary problem is consequently an endpoint problem: the ordinary power-law gain supplied by spatial concentration has been exhausted.  Rather than seeking a stronger negative power of $\mu$, one needs a different source of smallness which survives at this scale-invariant endpoint.

The main new ingredient is a logarithmically multiscale transverse profile. In two dimensions let
\[
G(y)=\frac{1}{2\pi}\log |y|
\]
be the fundamental solution of the Laplacian and consider a mollified dipole
\[
e\cdot\nabla(G*\rho_\varepsilon).
\]
Away from its mollified core this profile is exactly
\[
\frac{e\cdot y}{2\pi |y|^2},
\]
and therefore has size $r^{-1}$. On the annulus
\[
\varepsilon\lesssim r\lesssim 1
\]
its squared $L^2$ mass is proportional to
\[
\int_\varepsilon^1 \frac{dr}{r}
=
\log\frac{1}{\varepsilon}.
\]
At the same time the profile is harmonic throughout this long annulus: its Laplacian is confined to the mollified core and to a fixed outer cutoff region. Taking
\[
\varepsilon=e^{-L}
\]
and normalizing by the factor $\sqrt{L}$, we obtain a transverse field $\psi_L$ satisfying
\[
\|\psi_L\|_{L^2(\mathbb R^2)}=1,
\qquad
\Delta\psi_L=\operatorname{div}F_L,
\]
while
\[
\|\psi_L\|_{L^1(\mathbb R^2)}
+
\|F_L\|_{L^1(\mathbb R^2)}
\lesssim L^{-1/2}.
\]
The construction also provides a lower-order potential $Q_L$ with
\[
\psi_L=\operatorname{div}Q_L
\]
and the same logarithmic smallness in the norms required later. These are precisely the estimates established in Lemma \ref{lem:dipole} of the present paper.

This logarithmic gain may be viewed as the endpoint replacement for ordinary intermittency. Formally,
\[
\psi_L=\Delta^{-1}\operatorname{div}F_L,
\]
and in two dimensions the operator on the right has the scaling of the Riesz potential $I_1$. The strong endpoint mapping
\[
I_1:L^1(\mathbb R^2)\longrightarrow L^2(\mathbb R^2)
\]
fails: only a weak-$L^2$ endpoint estimate is available. The dipole above exploits exactly this failure. Before normalization, its $L^2$ norm grows like
\[
\sqrt{\log(1/\varepsilon)},
\]
although the $L^1$ size of the associated flux remains bounded. After normalization, this becomes
\[
\|\psi_L\|_2\sim 1,
\qquad
\|F_L\|_1\lesssim L^{-1/2}.
\]
In contrast to an ordinary intermittent Mikado flow, whose energy is concentrated near one transverse scale, the present profile distributes comparable amounts of $L^2$ mass over logarithmically many scales. The gain is therefore logarithmic rather than algebraic.

Lifting these transverse profiles to disjoint rational tubes on $\mathbb T^3$ produces logarithmic Mikado fields $W_{k,L}$ with the usual Euler geometry,
\[
\operatorname{div}W_{k,L}=0,
\qquad
\operatorname{div}(W_{k,L}\otimes W_{k,L})=0,
\qquad
\int_{\mathbb T^3}W_{k,L}\otimes W_{k,L}\,dx
=
k\otimes k,
\]
but with an additional viscous structure
\[
\operatorname{div}S_{k,L}=\Delta W_{k,L}
\]
such that
\[
\|W_{k,L}\|_{L^2(\mathbb T^3)}=|k|,
\qquad
\|W_{k,L}\|_{L^1(\mathbb T^3)}
+
\|S_{k,L}\|_{L^1(\mathbb T^3)}
\lesssim L^{-1/2}.
\]
Thus the $L^2$ cost of the perturbation remains of order one while the stress needed to absorb the viscosity tends to zero.  This separation is the basic reason that the stationary $L^2$ iteration can close.

Two additional ingredients allow the logarithmic gain to survive in the whole-space iteration.  First, an antisymmetric potential provides an exactly divergence-free compactly supported localization of the periodic microscopic profiles, avoiding the nonlocal whole-space Leray projection.  Second, because the innermost scale is $e^{-L}$, positive derivatives of the new profile can be extremely large; a direct modulation estimate would destroy the logarithmic gain.  The modulated-viscosity identity of \cref{sec:viscosity} cancels all fast derivatives of the Mikado profile and leaves estimates involving only its low-order $L^1$ quantities.  The remaining local errors are handled by a support-preserving symmetric anti-divergence operator.

Combining these ingredients yields a compactly supported Navier--Stokes--Reynolds perturbation whose new stress satisfies
\[
 \|\overline R\|_{L^1(\R^3)}
 \le C\left(\gamma^{-1}+\frac{\gamma}{\sqrt L}\right),
\]
where the constant is independent of $L$, $\gamma$, and the internal scale $e^{-L}$.  One may therefore first choose $\gamma$ large and then take $L\gg\gamma^2$.  At the same time the velocity increments are summable in $L^2$, so the iteration converges strongly in $L^2$ and the quadratic nonlinearities converge strongly in $L^1$.  The limiting field is therefore an ordinary distributional solution with $u\otimes u\in L^1$, rather than a stationary singular solution requiring a renormalized product.

The paper is organized as follows.  \Cref{sec:prelim} collects the geometric and operator-theoretic tools, including the explicit finite-rank correction in the local symmetric anti-divergence operator.  The logarithmic dipole and periodic Mikado family are constructed in \cref{sec:dipole,sec:mikado}.  Compact localization and the modulated viscosity identity are proved in \cref{sec:localization,sec:viscosity}.  The whole-space oscillation stress is treated in \cref{sec:oscillation}.  The  perturbation proposition  are given in \cref{sec:perturbation}, and the principal iteration is carried out in \cref{sec:iteration}.  The localized $h$-principle and its sharp $L^2$ obstruction are proved in \cref{sec:flexibility}; affine identities, exact energy prescription, and multiplicity are developed in \cref{sec:affine-energy}; and periodization together with evolutionary consequences is treated in \cref{sec:periodic-dynamic}.  \Cref{app:local-kernel} gives a self-contained proof of the flat local symmetric-divergence construction used in the argument.

\section{Preliminaries}\label{sec:prelim}

\subsection{Notation and the Reynolds system}
We use the Euclidean/Frobenius norm
\[
 |A|=\left(\sum_{i,j=1}^3 A_{ij}^2\right)^{1/2}
\]
for matrices and the associated balls in finite-dimensional matrix spaces.  Set
\[
 \Sym_3=\{A\in\R^{3\times3}:A^T=A\},
 \qquad
 \Szero=\{A\in\Sym_3:\operatorname{tr}A=0\}.
\]
For a vector field $V$ and a matrix field $A=(A_{ij})$ our conventions are
\[
 (\nabla V)_{ij}=\partial_jV_i,
 \qquad
 (\diver A)_i=\partial_jA_{ij}.
\]
Repeated indices are summed.  We recall that
\[
 \dev A=A-\frac13(\operatorname{tr}A)I.
\]
With the Frobenius norm, $\dev$ is an orthogonal projection and hence
\begin{equation}\label{eq:dev-contraction}
 |\dev A|\le |A|,
 \qquad
 \|\dev A\|_{L^1}\le\|A\|_{L^1}.
\end{equation}
Smooth fields retain their usual closed support.  Our Fourier convention on the normalized unit torus is
\[
 \widehat f(m)=\int_{\T^3}f(x)e^{-2\pi i m\cdot x}\,\dd x,
 \qquad
 f(x)=\sum_{m\in\Z^3}\widehat f(m)e^{2\pi i m\cdot x}.
\]
Periodic functions are identified with their $\Z^3$-periodic lifts to $\R^3$ whenever they are evaluated at $\gamma x$.  We always display the domain when periodic and whole-space quantities occur in the same argument: periodic profile norms are written as $\|\cdot\|_{L^p(\T^3)}$, whole-space norms as $\|\cdot\|_{L^p(\R^3)}$, and transverse norms as $\|\cdot\|_{L^p(\R^2)}$.  Within a paragraph devoted entirely to one of these domains, a shorter subscript such as $\|\cdot\|_p$ may be used only after the domain has been fixed explicitly.  For a finite amplitude family $a=(a_k)_{k\in\Lambda}$ we abbreviate
\[
 \AmpNorm{m}{a}:=\max_{k\in\Lambda}\|a_k\|_{C^m(\R^3)}.
\]

We iterate smooth compactly supported solutions of the stationary Navier--Stokes--Reynolds system
\begin{equation}\label{eq:NSR}
 -\Delta u+\diver(u\otimes u)+\nabla p=\diver R,
 \qquad \diver u=0,
\end{equation}
where $R\in C_c^\infty(U;\Szero)$ and $U\Subset\R^3$ is a fixed ball.

\subsection{The geometric lemma}
We use a standard finite rank-one decomposition; see, for example, \cite{DLS2013,DaneriSzekelyhidi2017}.

\begin{lemma}[Geometric lemma]\label{lem:geometric}
There is a finite labeled set $\Lambda\subset\Z^3\setminus\{0\}$ and, for each $k\in\Lambda$, a smooth positive function
\[
 \Gamma_k:B_{1/2}(I)\cap\Sym_3\longrightarrow(0,\infty),
 \qquad k\in\Lambda,
\]
such that
\begin{equation}\label{eq:geometric}
 M=\sum_{k\in\Lambda}\Gamma_k(M)^2k\otimes k
\end{equation}
for every $M\in B_{1/2}(I)\cap\Sym_3$.
\end{lemma}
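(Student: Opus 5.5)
The plan is to use the classical De Lellis--Sz\'ekelyhidi argument: first find finitely many integer directions whose rank-one tensors decompose the identity with positive weights, then extend the decomposition to a neighbourhood of $I$ by linear algebra, and finally take square roots. Concretely, we choose $\Lambda$ to be a set of six integer vectors $k_1,\dots,k_6$ with the following two properties:
\begin{enumerate}[label=(\alph*)]
 \item the tensors $k_j\otimes k_j$ form a basis of the six-dimensional space $\Sym_3$;
 \item $I=\sum_j\lambda_j\,k_j\otimes k_j$ with all $\lambda_j>0$.
\end{enumerate}
A standard choice is the six vectors $(1,\pm1,0)$, $(1,0,\pm1)$, $(0,1,\pm1)$. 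For these we have
\[
 \sum_j k_j\otimes k_j=4I,
\]
so $\lambda_j=\tfrac14$ works. Property (a) is checked by observing that from $(1,1,0)\otimes(1,1,0)$ and $(1,-1,0)\otimes(1,-1,0)$ we recover $e_1\otimes e_1+e_2\otimes e_2$ and $e_1\otimes e_2+e_2\otimes e_1$ by taking sums and differences. Doing the same in each coordinate plane produces all off-diagonal symmetric units together with the three pairwise sums of the diagonal units, and these six matrices span $\Sym_3$.

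Since $\{k_j\otimes k_j\}$ is a basis, there are unique linear functionals $\ell_j:\Sym_3\to\R$ with
\[
 M=\sum_j\ell_j(M)\,k_j\otimes k_j
 \qquad\text{for every }M\in\Sym_3.
\]
By uniqueness, $\ell_j(I)=\lambda_j=\tfrac14>0$. We then set $\Gamma_k(M):=\sqrt{\ell_k(M)}$; this is smooth and positive wherever $\ell_k(M)>0$, and \eqref{eq:geometric} holds by construction.

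The one real point is that positivity must hold on the whole ball $B_{1/2}(I)$, not merely near $I$. We have
\[
 \ell_j(M)\ge \tfrac14-\|\ell_j\|\,|M-I|,
\]
so we need the operator norms $\|\ell_j\|$ (with respect to the Frobenius norm) to satisfy $\|\ell_j\|<\tfrac12$. This is computed explicitly. Taking traces of the decomposition gives
\[
 \operatorname{tr}M=2\sum_j\ell_j(M),
\]
and matching entries gives, for example,
\[
 \ell_{(1,\pm1,0)}(M)=\tfrac14\bigl(M_{11}+M_{22}-M_{33}\bigr)\pm\tfrac12M_{12},
\]
with analogous formulas for the other four directions. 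The dual norm is the Frobenius norm of the representing symmetric matrix. For this functional the representing matrix has diagonal $(\tfrac14,\tfrac14,-\tfrac14)$ and entries $\pm\tfrac14$ in the $(1,2)$ and $(2,1)$ positions, so
\[
 \|\ell_j\|=\sqrt{5}/4\approx0.56>\tfrac12.
\]

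The radius is therefore marginal, and this is the step I expect to be the main obstacle. My first attempt is to enlarge $\Lambda$, for instance by adding the coordinate vectors $e_i$. With a larger family the coefficients are no longer unique, so we choose them instead as a smooth positive selection, namely the minimal-norm solution perturbed around a strictly positive decomposition of $I$. This reduces the sensitivity to perturbations of $I$. If that still does not reach radius $\tfrac12$, the fallback is to rescale: replace each $k$ by an integer multiple $Nk$, so that $k\otimes k$ gains a factor $N^2$, and decompose $M$ using a richer rational family whose positivity region contains $B_{1/2}(I)$. This is exactly the standard lemma cited from \cite{DLS2013,DaneriSzekelyhidi2017}, whose proofs cover any compact subset of positive-definite matrices; $\overline{B_{1/2}(I)}$ is such a set, since every $M$ in it has eigenvalues at least $\tfrac12$. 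In the general form one covers this compact set by finitely many neighbourhoods on which the six-vector construction works, and glues the resulting decompositions with a partition of unity, taking $\Lambda$ to be the union of the vector families. Positivity and smoothness of $\Gamma_k^2$ are preserved under the gluing, and square roots are then taken.
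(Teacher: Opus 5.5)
\textbf{There is a genuine gap.} Your six-vector computation is correct, and the failure is real, not an artifact of the estimate. The $\ell_j$ are the unique coefficients and are linear, so $\ell_j(I-tA_j/|A_j|)=\tfrac14-t\sqrt5/4<0$ for $t\in(1/\sqrt5,\tfrac12)$, where $A_j$ is the representing matrix of $\ell_j$. None of your fallbacks repairs this.

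\emph{Enlarging $\Lambda$ with a minimal-norm correction.} Taking a minimal-norm correction around a positive decomposition of $I$ gives coefficients affine in $M$. No affine choice works for any finite family. Suppose $\Gamma_k(M)^2=b_k+A_k:M$ with $A_k\in\Sym_3$. Then $\sum_k b_k\,k\otimes k=0$ and $\sum_k(A_k:X)\,k\otimes k=X$ for all $X\in\Sym_3$. Taking traces gives $\sum_k|k|^2(b_k+\operatorname{tr}A_k)=3$. Pairing with an orthonormal basis of trace-free symmetric matrices and summing gives $\sum_k\dev A_k:\dev(k\otimes k)=5$. Nonnegativity on $B_r(I)$ means $r|A_k|\le b_k+\operatorname{tr}A_k$, and $|\dev(k\otimes k)|=\sqrt{2/3}\,|k|^2$. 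Hence $5\le 3\sqrt{2/3}\,r^{-1}$, i.e.\ $r\le\sqrt6/5<\tfrac12$.

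\emph{Rescaling.} Replacing $k$ by $Nk$ only multiplies $\Gamma_k$ by $N^{-1}$ and does not move the positivity region.

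\emph{Partition-of-unity gluing.} A genuinely nonlinear selection is therefore needed, and your only candidate does not do what you claim. If a label $k$ belongs only to the local family $\Lambda_\alpha$, the glued coefficient $\Gamma_k^2=\chi_\alpha\gamma_{k,\alpha}^2$ vanishes wherever $\chi_\alpha=0$. So positivity is lost, and $\Gamma_k=(\Gamma_k^2)^{1/2}$ need not be smooth on that zero set. Gluing different families yields only $\Gamma_k\ge0$; this is smooth if you use labels $(\alpha,k)$ and $\chi_\alpha=\tilde\chi_\alpha^2$. That would suffice for the later use $a_k=\rho^{1/2}\Gamma_k(M)$, but it is weaker than the stated lemma.

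\textbf{The fix:} reserve a strictly positive share for every vector after the family has been fixed.

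First carry out your gluing on the larger compact set $\mathcal N'=\{M\in\Sym_3:\tfrac14 I\le M\le 2I\}$. Build local six-vector bases from integer multiples of rational approximations of $M_0^{1/2}k_j$. Glue the squared coefficients linearly; this is legitimate because convex combinations of nonnegative solutions of the linear constraint are again solutions. This gives a fixed finite labeled $\Lambda$ and smooth $c_k\ge0$ on a neighbourhood of $\mathcal N'$ with $M=\sum_k c_k(M)\,k\otimes k$.

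Then set $P=\sum_{k\in\Lambda}k\otimes k$, choose $\eps>0$ with $\eps P\le\tfrac14 I$, and define $\Gamma_k(M)=\bigl(\eps+c_k(M-\eps P)\bigr)^{1/2}$. For $|M-I|<\tfrac12$ the eigenvalues of $M$ lie in $(\tfrac12,\tfrac32)$, so $M-\eps P$ lies in the interior of $\mathcal N'$. Hence $\sum_k\Gamma_k(M)^2\,k\otimes k=\eps P+(M-\eps P)=M$, and each $\Gamma_k\ge\sqrt\eps$ is smooth. There is no circularity, because $\eps$ is chosen after $\Lambda$ is fixed.

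The paper itself gives no argument here and only cites the standard lemma. With this reserve step your sketch becomes an actual proof of the statement as written.
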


\subsection{Periodic symmetric anti-divergence}

\begin{lemma}[Periodic symmetric anti-divergence]\label{lem:periodic-antidiv}
For a smooth periodic vector field $f$, define $\Rop f\in C^\infty(\T^3;\Szero)$ by the Fourier symbol, for $m\ne0$,
\begin{equation}\label{eq:Rtorus-symbol}
 \widehat{\Rop f}_{ij}(m)
 =\frac1{2\pi i}\left[
 \frac{\widehat f_i(m)m_j+\widehat f_j(m)m_i}{|m|^2}
 -\frac{\widehat f(m)\cdot m}{2|m|^2}\delta_{ij}
 -\frac{\widehat f(m)\cdot m}{2|m|^4}m_im_j
 \right],
\end{equation}
and set the zero mode equal to zero.  Then
\begin{equation}\label{eq:Rtorus-div}
 \diver\Rop f=f-\int_{\T^3}f,
\end{equation}
and $\Rop$ is convolution with a matrix-valued kernel $K_{\T}\in L^1(\T^3)$.  In particular,
\begin{equation}\label{eq:Rtorus-L1}
 \|\Rop f\|_{L^1(\T^3)}\le C\|f\|_{L^1(\T^3)}.
\end{equation}
Moreover, away from the origin of the torus,
\begin{equation}\label{eq:Rtorus-kernel-pointwise}
 |K_{\T}(x)|\le C\bigl(1+\operatorname{dist}(x,\Z^3)^{-2}\bigr).
\end{equation}
\end{lemma}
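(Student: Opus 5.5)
The plan is to verify the two algebraic identities directly from the symbol, and then obtain the kernel bounds by writing $\Rop$ as the periodization of a Euclidean Fourier multiplier whose symbol is smooth and homogeneous of degree $-1$ at infinity.

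\emph{Algebra.} Write $\sigma_{ij}(\xi)\widehat f(\xi)$ for the bracket in \eqref{eq:Rtorus-symbol} divided by $2\pi i$, so that $\sigma(\xi)$ is a symmetric matrix-valued linear map in $\widehat f$ for $\xi\neq0$. With the paper's convention $(\diver A)_i=\partial_jA_{ij}$, the $m$-th Fourier coefficient of $(\diver\Rop f)_i$ is $2\pi i\, m_j\widehat{\Rop f}_{ij}(m)$. Contracting the bracket with $m_j$ gives
\[
\widehat f_i+\frac{m_i(\widehat f\cdot m)}{|m|^2}\Bigl(1-\tfrac12-\tfrac12\Bigr)=\widehat f_i ,
\]
and this holds for every $m\ne0$. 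Since the zero mode is set to zero, this yields \eqref{eq:Rtorus-div}. Taking the trace of the bracket gives
\[
\frac{2\,\widehat f\cdot m}{|m|^2}-\frac32\frac{\widehat f\cdot m}{|m|^2}-\frac12\frac{\widehat f\cdot m}{|m|^2}=0 .
\]
The bracket is visibly symmetric in $i,j$. Hence $\Rop f$ takes values in $\Szero$, and it is smooth because $f$ is.

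\emph{Kernel.} Fix $\chi\in C_c^\infty(B_{1/2}(0))$ with $\chi=1$ near $0$, and set $\tilde\sigma(\xi)=(1-\chi(\xi))\sigma(\xi)$. Then $\tilde\sigma$ is a $C^\infty$ symbol on $\R^3$ that agrees with $\sigma$ on $\Z^3\setminus\{0\}$ and vanishes at $0$. It also satisfies
\[
|\partial^\alpha\tilde\sigma(\xi)|\le C_\alpha(1+|\xi|)^{-1-|\alpha|}.
\]
Let $\tilde k$ be its inverse Euclidean Fourier transform, understood as a tempered distribution. I would prove two bounds on $\tilde k$.
\begin{enumerate}[label=\textup{(\alph*)}]
\item Near the origin, $|\tilde k(x)|\le C|x|^{-2}$ for $|x|\le1$. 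This follows from a Littlewood--Paley decomposition $\tilde\sigma=\sum_{j\ge0}\tilde\sigma\varphi_j$. Each dyadic piece has kernel bounded by $C2^{2j}\min\bigl(1,(2^j|x|)^{-N}\bigr)$, and summing over $j$ gives $|x|^{-2}$.
\item Away from the origin, $\tilde k$ is smooth and decays faster than any power. Indeed, for $|\alpha|=N\ge3$ the function $\partial^\alpha\tilde\sigma$ is integrable, so $|x|^N|\tilde k(x)|\le C_N$. The same dyadic sum shows that $\tilde k$ is smooth on $\R^3\setminus\{0\}$.
\end{enumerate}
Consequently $\tilde k\in L^1(\R^3)$, and the periodization
\[
K_{\T}(x)=\sum_{n\in\Z^3}\tilde k(x+n)
\]
converges absolutely and uniformly away from $\Z^3$.

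By Poisson summation, which is justified by the rapid decay of $\tilde k$ together with the fact that $\tilde\sigma$ is continuous, the Fourier coefficients of $K_{\T}$ are $\tilde\sigma(m)$. These equal $\sigma(m)$ for $m\ne0$ and equal $0$ for $m=0$. Therefore $\Rop f=K_{\T}*f$.

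In the periodization, the term with $x+n$ closest to the origin contributes $C\operatorname{dist}(x,\Z^3)^{-2}$. The remaining terms are summed using the rapid decay and give a bounded contribution. This proves \eqref{eq:Rtorus-kernel-pointwise}. Since $\operatorname{dist}(x,\Z^3)^{-2}$ is integrable in three dimensions, $K_{\T}\in L^1(\T^3)$, and Young's inequality on $\T^3$ gives \eqref{eq:Rtorus-L1}.

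\emph{Main obstacle.} The only nonroutine step is the pointwise estimate (a) near the origin. One must make sure that cutting off the low frequencies does not spoil homogeneity at small scales; it does not, because $\chi\sigma$ is compactly supported in frequency and its contribution to the kernel is bounded. If the dyadic argument becomes cumbersome, I would instead write $\tilde k=k_{\mathrm{hom}}-(\chi\sigma)^\vee$. Here $k_{\mathrm{hom}}$ is the homogeneous degree $-2$ kernel of $\sigma$, smooth away from $0$. The term $(\chi\sigma)^\vee$ is bounded, since $\chi\sigma\in L^1$. This representation is used only near $x=0$; the decay at infinity still comes from the smoothness of $\tilde\sigma$.
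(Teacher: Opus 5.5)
Your proof is correct and follows essentially the paper's route. The algebraic identities are verified the same way. The kernel bounds come, as in the paper, from a dyadic decomposition that uses the degree $-1$ homogeneity of the symbol, combined with periodization.

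The difference is only the order of operations. The paper applies Poisson summation to each dyadic block separately. This gives $\|K_q\|_{L^1(\T^3)}\le C2^{-q}$ directly, and the pointwise bound follows by summing in $q$, so the paper never needs a separate decay-at-infinity estimate like your step (b). You instead assemble the Euclidean kernel $\tilde k$ on $\R^3$ first and periodize it once.
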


\begin{proof}
Symmetry and trace-freeness follow directly from \eqref{eq:Rtorus-symbol}.  Contracting with $2\pi i m_j$ gives $\widehat f_i(m)$ for every $m\ne0$, which proves \eqref{eq:Rtorus-div}.

We prove the endpoint kernel estimate.  Let $M(\xi)$ denote any scalar component of the matrix multiplier in \eqref{eq:Rtorus-symbol}; it is smooth and homogeneous of degree $-1$ on $\R^3\setminus\{0\}$.  Choose $\chi\in C_c^\infty(\{1/2<|\xi|<2\})$ and a smooth low-frequency cutoff so that the nonzero lattice frequencies are decomposed into a finite low-frequency part and the dyadic pieces
\[
 M_q(m)=\chi(2^{-q}m)M(m),
 \qquad q\ge0.
\]
By homogeneity, $M_q(m)=2^{-q}a(2^{-q}m)$ for a fixed smooth compactly supported function $a$ on the annulus.  Define the Euclidean inverse Fourier transform by
\[
 \check a(x)=\int_{\R^3}a(\xi)e^{2\pi i x\cdot\xi}\,\dd\xi.
\]
With the torus convention fixed above, Poisson summation gives the kernel of the $q$th piece exactly as
\[
 K_q(x)=2^{2q}\sum_{\ell\in\Z^3}\check a\bigl(2^q(x+\ell)\bigr).
\]
Since $\check a$ is Schwartz, for every $N>3$,
\begin{equation}\label{eq:dyadic-kernel-bound}
 |K_q(x)|\le C_N2^{2q}\sum_{\ell\in\Z^3}
 \bigl(1+2^q|x+\ell|\bigr)^{-N}.
\end{equation}
Integrating over a fundamental domain and unfolding the lattice sum yields
\[
 \|K_q\|_{L^1(\T^3)}
 \le C_N2^{2q}\int_{\R^3}(1+2^q|x|)^{-N}\,\dd x
 \le C2^{-q}.
\]
Thus $\sum_{q\ge0}K_q$ converges absolutely in $L^1(\T^3)$; the omitted low-frequency kernel is smooth.  Summing \eqref{eq:dyadic-kernel-bound} first over $2^q\operatorname{dist}(x,\Z^3)\le1$ and then over the complementary range proves \eqref{eq:Rtorus-kernel-pointwise}.  Young's inequality gives \eqref{eq:Rtorus-L1}.  This is an order $-1$ convolution estimate, not a strong $L^1$ estimate for an order-zero Calder\'on--Zygmund operator.
\end{proof}

\subsection{Profile-independent periodic cell estimates}
The following elementary estimates are used repeatedly to avoid constants depending on the internal scale of the profile.

\begin{lemma}[Periodic cells]\label{lem:periodic-cells}
Let $K\subset\R^3$ be bounded, $1\le p<\infty$, $F\in L^p(\T^3)$, and $\gamma\in\N$.  Then
\begin{equation}\label{eq:cell-Lp}
 \|F(\gamma\,\cdot)\|_{L^p(K)}
 \le C_K\|F\|_{L^p(\T^3)}.
\end{equation}
If $a\in C_c^1(\R^3)$ and $F\in L^2(\T^3)$, then
\begin{equation}\label{eq:cell-average}
 \left|
 \int_{\R^3}a(x)^2|F(\gamma x)|^2\,\dd x
 -\|F\|_{L^2(\T^3)}^2\int_{\R^3}a(x)^2\,\dd x
 \right|
 \le C_{\supp a}\gamma^{-1}\|a\|_{C^1}^2\|F\|_2^2.
\end{equation}
Both constants are independent of all derivative norms of $F$.
\end{lemma}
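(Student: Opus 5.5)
Both estimates will be proved by decomposing space into the periodic cells $\gamma^{-1}(z+[0,1)^3)$, $z\in\Z^3$. Let $\mathcal Q$ be the finite set of cells meeting $K$ (respectively $\supp a$). Every such cell lies in the $\sqrt3$-neighbourhood of $K$, since $\gamma\ge1$. Hence
\[
 \#\mathcal Q\le C_K\gamma^3 .
\]
On each cell $Q$ the change of variables $y=\gamma x$ maps $Q$ bijectively onto a unit cube, which is a fundamental domain for $\T^3$. Therefore
\[
 \int_Q|F(\gamma x)|^p\,\dd x=\gamma^{-3}\|F\|_{L^p(\T^3)}^p .
\]
Summing over $\mathcal Q$ gives $\|F(\gamma\cdot)\|_{L^p(K)}^p\le C_K\|F\|_{L^p(\T^3)}^p$, which is \eqref{eq:cell-Lp}. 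The constant $C_K$ depends only on the volume of the $\sqrt3$-neighbourhood of $K$. It does not involve $p$ in an essential way, and it involves no derivatives of $F$.

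For \eqref{eq:cell-average}, we sum cell by cell. On a cell $Q$ with centre $x_Q$ we write
\[
 a(x)^2=a(x_Q)^2+\bigl(a(x)^2-a(x_Q)^2\bigr).
\]
The first piece contributes
\[
 a(x_Q)^2\gamma^{-3}\|F\|_2^2=\|F\|_2^2\int_Q a(x_Q)^2\,\dd x .
\]
Since $\operatorname{diam}Q\le\sqrt3\gamma^{-1}$, we have
\[
 |a(x)^2-a(x_Q)^2|\le 2\|a\|_{C^0}\|a\|_{C^1}\sqrt3\gamma^{-1}\le C\gamma^{-1}\|a\|_{C^1}^2 .
\]
Consequently the error on $Q$ is bounded by $C\gamma^{-1}\|a\|_{C^1}^2\int_Q|F(\gamma x)|^2\,\dd x=C\gamma^{-4}\|a\|_{C^1}^2\|F\|_2^2$. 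The same pointwise bound controls the difference between $\int_Q a(x_Q)^2\,\dd x$ and $\int_Q a^2\,\dd x$, and that error is multiplied by $\|F\|_2^2$.

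Summing over the at most $C_{\supp a}\gamma^3$ cells gives a total error of $C_{\supp a}\gamma^{-1}\|a\|_{C^1}^2\|F\|_2^2$. Cells not meeting $\supp a$ contribute nothing to either integral, so the sums over $\mathcal Q$ account for everything.

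There is no real obstacle. The only point needing care is that the cell decomposition is exact: each cell is a full period cell because $\gamma\in\N$. As a result, only $\|F\|_{L^p(\T^3)}$ enters and never any derivative of $F$. The uniformity in $\gamma\ge1$ of the cell-count bound is elementary.
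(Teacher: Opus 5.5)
Your proof is correct and follows the paper's argument essentially step for step. Like the paper, you partition space into the cells $\gamma^{-1}(z+[0,1)^3)$, use the exact per-cell identity $\int_Q|F(\gamma x)|^p\,\dd x=\gamma^{-3}\|F\|_{L^p(\T^3)}^p$ and the count of $O(\gamma^3)$ cells, and then freeze $a^2$ on each cell and treat the frozen sum as a Riemann sum. One harmless remark is off: the cells are full period cells of $F(\gamma\,\cdot)$ for every real $\gamma>0$, so integrality of $\gamma$ is not what makes this work; only $\gamma\ge1$ enters, through the uniform cell count.
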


\begin{proof}
Partition $\R^3$ into cubes
\[
 Q_{m,\gamma}=\gamma^{-1}(m+[0,1)^3),\qquad m\in\Z^3.
\]
The integral of $|F(\gamma x)|^p$ over a full cube is exactly $\gamma^{-3}\|F\|_p^p$.  Only $O_K(\gamma^3)$ cubes meet $K$, proving \eqref{eq:cell-Lp}.

For \eqref{eq:cell-average}, on every cube meeting $\supp a$ freeze $a^2$ at one point of the cube.  The freezing error is at most $C\gamma^{-1}\|a\|_{C^1}^2$ times the full-cell integral of $|F(\gamma x)|^2$.  Summing over $O_{\supp a}(\gamma^3)$ cells gives an $O(\gamma^{-1})$ error.  The frozen sum is a Riemann sum for $\int a^2$ with the same error.  No regularity of $F$ enters.
\end{proof}

\subsection{A compactly supported scalar divergence solver}

\begin{lemma}[Compactly supported scalar divergence solver]\label{lem:scalar-bogovskii}
Let $D\subset\R^d$ be a ball.  If $h\in C_c^\infty(D)$ and $\int_Dh=0$, then there is $\mathcal B_Dh\in C_c^\infty(D;\R^d)$ such that
\[
 \diver(\mathcal B_Dh)=h.
\]
\end{lemma}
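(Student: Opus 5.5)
We would prove this by the classical Bogovskii construction on a ball, which is star-shaped with respect to every point of a smaller concentric ball. Let $D=B_r(x_0)$ and fix a weight $\omega\in C_c^\infty(B_{r/2}(x_0))$ with $\int\omega=1$. The plan is to define
\[
 \mathcal B_Dh(x)=\int_D h(y)\,(x-y)\int_1^\infty \omega\bigl(y+s(x-y)\bigr)s^{d-1}\,\dd s\,\dd y,
\]
equivalently an average over centers $z\in\supp\omega$ of the radial line integrals $\int_0^1 h(z+t(x-z))\,t^{-d}\ldots$. We then check three properties in turn: smoothness, support, and the divergence identity.

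\textbf{Smoothness.} After the substitution $y=x-w$, the formula becomes $\int h(x-w)\,w\,\int_1^\infty\omega(x-w+sw)s^{d-1}\dd s\,\dd w$. Derivatives in $x$ can then fall on $h$ and $\omega$. The singular kernel $|w|^{1-d}$ is integrable, and the standard rearrangement from Galdi's treatment shows that every derivative stays locally integrable, so $\mathcal B_Dh\in C^\infty$. Alternatively, we may simply cite that the Bogovskii operator maps $C_c^\infty(D)$ into $C_c^\infty(D)$.

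\textbf{Support.} Suppose the integrand is nonzero. Then $y\in\supp h$ and $z=y+s(x-y)\in\supp\omega$ for some $s\ge1$, so $x=y+(z-y)/s$ lies on the segment from $y$ to $z$. Hence $x$ lies in the convex hull of $\supp h\cup\supp\omega$, which is a compact subset of $D$ because $D$ is convex. This gives compact support in $D$.

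\textbf{Divergence identity.} Compute $\diver\mathcal B_Dh$ distributionally by pairing with a test function $\phi$ and writing
\[
 \int\phi(x)-\int\omega\phi
\]
as a line integral from each $z$ to $x$, that is, by the fundamental theorem of calculus along rays. Integrating against $h$ and using $\int h=0$ removes the $\omega\int\phi$-type term, and what remains is $\diver\mathcal B_Dh=h-\omega\int h=h$.

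The main obstacle is the smoothness claim, which requires justifying differentiation under the singular integral. To avoid the issue entirely, we would take the cleaner route of quoting Bogovskii's theorem (Galdi, Lemma III.3.1) for domains star-shaped with respect to a ball, which directly gives $C_c^\infty$ output. A simpler alternative exists for the ball specifically: induct on dimension using cut-off primitives on a cube inside $D$. However, $\supp h$ need not lie in a small cube, so we prefer the Bogovskii formula.
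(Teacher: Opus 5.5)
Your proposal is correct and follows the same route as the paper, whose proof simply invokes the classical Bogovskii construction on a star-shaped domain with a citation to Galdi, Chapter~III. Your explicit kernel, the segment argument for the support, and the ray computation giving $\diver\mathcal B_Dh=h-\omega\int h=h$ spell out what that citation contains; smoothness rests on the same reference in both your proposal and the paper.
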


\begin{proof}
This is the classical Bogovskii construction on a star-shaped domain; see, for example, \cite[Chapter~III, Theorem~3.1]{Galdi2011}.  We use this lemma only for two fixed smooth functions, so no endpoint norm estimate is needed here.
\end{proof}

\subsection{Compactly supported symmetric anti-divergence}
Let $U$ be a ball.  A vector field $f\in C_c^\infty(U;\R^3)$ is said to satisfy the force and torque compatibility conditions if
\begin{equation}\label{eq:compatibility}
 \int_{\R^3}f\,\dd x=0,
 \qquad
 \int_{\R^3}(x_if_j-x_jf_i)\,\dd x=0
 \quad(1\le i<j\le3).
\end{equation}
These are exactly the orthogonality conditions against the Euclidean Killing fields, namely translations and rotations.  Fix once and for all $\eta\in C_c^\infty(U)$ with $\int_{\R^3}\eta=1$, set $A_\eta:=\supp\eta\Subset U$, and write $\operatorname{co}$ for the convex hull.

\begin{lemma}[Local symmetric anti-divergence]\label{lem:local-sym-div}
For every ball $U\subset\R^3$ there is a linear operator
\[
 \Sloc:\{f\in C_c^\infty(U;\R^3):f\text{ satisfies \eqref{eq:compatibility}}\}
 \longrightarrow C_c^\infty(U;\Sym_3)
\]
such that
\begin{equation}\label{eq:local-right-inverse}
 \diver\Sloc f=f
\end{equation}
and
\begin{equation}\label{eq:local-L1}
 \|\Sloc f\|_{L^1(\R^3)}\le C_U\|f\|_{L^1(\R^3)}.
\end{equation}
Moreover, the support of $\Sloc f$ is contained in $\operatorname{co}(\supp f\cup A_\eta)\Subset U$.
\end{lemma}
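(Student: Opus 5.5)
The plan is to reduce to the case where $f$ is supported near a single point, solve there by integrating along segments, and then reassemble the pieces. The construction is a Bogovskii-type formula for symmetric tensors, built from the flat local kernel the paper mentions in \cref{app:local-kernel}.

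\textbf{Step 1: local inverse for a single point.} Fix $y\in U$ and write $\operatorname{co}(\cdot)$ for convex hulls as in the statement. The standard formula for solving $\diver S=f$ with $S$ symmetric integrates the force density along rays. One takes
\[
 S^{y}f(x)=\int\!\!\int_{\text{segments from }y}\bigl(\text{terms of the form } f\otimes(x-y)+(x-y)\otimes f\bigr)\,\dots
\]
i.e.\ the symmetric analogue of the Poincar\'e/Bogovskii homotopy formula. Its divergence equals $f$ minus terms that pair $f$ against the Killing fields: the constant vectors and $x\mapsto A(x-y)$ with $A$ antisymmetric. Under \eqref{eq:compatibility} these terms vanish, because the total force and the torque (about any point, once the force vanishes) are zero. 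The kernel has homogeneity $|x-z|^{-2}$, which is integrable in three dimensions.

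\textbf{Step 2: average over base points.} Averaging $S^{y}$ over $y$ with weight $\eta(y)$ gives an operator $\Sloc f=\int\eta(y)S^{y}f\,dy$. Since each $S^{y}f$ is supported on the segments joining $\supp f$ to $y\in A_\eta$, the support of $\Sloc f$ lies in $\operatorname{co}(\supp f\cup A_\eta)\Subset U$. The averaging smooths the kernel in its singular directions, so the result is a kernel $K(x,z)$ satisfying
\[
 |K(x,z)|\le C|x-z|^{-2}\cdot(\text{bounded factor}),
\]
and this kernel is supported for $x$ in the bounded set $U$.

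\textbf{Step 3: the two conclusions.} The $L^1$ bound \eqref{eq:local-L1} then follows from $\sup_z\int_U|K(x,z)|\,dx\le C_U$ by Fubini. Smoothness of $\Sloc f$ comes from rewriting the averaged operator as a convolution-type formula in $x-z$, so that derivatives can be moved onto $f$ and $\eta$, exactly as in the classical scalar Bogovskii argument.

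\textbf{Step 4 (main obstacle): removing the finite-rank defect.} Verifying $\diver\Sloc f=f$ exactly is the delicate point. The averaged homotopy formula produces $f$ plus a finite-rank remainder of the form
\[
 \sum_{\ell}\Bigl(\int f\cdot\kappa_\ell\Bigr)\,\eta\text{-type fields},
\]
where the $\kappa_\ell$ range over the six Killing fields. I would compute this remainder explicitly and check that it vanishes under \eqref{eq:compatibility}. If the remainder does not vanish identically, I would add an explicit correction. This correction is built from fixed symmetric tensors supported in $A_\eta$, solving $\diver T_\ell=\text{defect}_\ell$ for the six moment-matching profiles, with coefficients linear in the moments of $f$ and bounded by $\|f\|_{L^1}$. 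This is the "explicit finite-rank correction" alluded to in \cref{sec:prelim}; it preserves the support and $L^1$ bounds.
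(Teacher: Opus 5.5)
Your architecture is the paper's (\cref{prop:flat-sym-div}). There, $\Sloc=S_\eta$ is an $\eta$-weighted average of straight-line formulas. Its support lies on the segments $[y,y_1]$ with $y\in\supp f$ and $y_1\in\supp\eta$. The bound $|K_\eta(x,y)|\le C_U|x-y|^{-2}$ plus Fubini gives the $L^1$ estimate, and smoothness comes from $(\partial_{x_\ell}+\partial_{y_\ell})K_\eta=K_{\partial_\ell\eta}$. A finite-rank remainder is then killed by force and torque balance.

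The gap is that you never write down the symmetric kernel. Every substantive claim you make about it is a property of one specific formula, not a consequence of averaging: the exact divergence identity, the remainder depending only on Killing pairings, and the $|x-y|^{-2}$ bound.

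The only structure you indicate is not enough. Put $z=x-y$, $\mathfrak a_\eta(z,y)=\int_r^\infty\eta(y+s\omega)s^2\dd s$ and $V_{ij}{}^k=(z_i\delta_j^k+z_j\delta_i^k)/r^3$. The symmetrized ray kernel $\frac12\mathfrak a_\eta V_{ij}{}^k$ alone has $x$-divergence $\frac12\delta_i^k(\delta_y-\eta)+\frac12\partial_{z_k}(\mathfrak a_\eta z_i/r^3)$. So it recovers only half of $f$, and the error contains an order-zero singular operator rather than a finite-rank term.

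The paper's kernel \eqref{eq:appendix-Keta} adds the second-order terms $\frac12\partial_{z_m}[\mathfrak a_\eta z_mV_{ij}{}^k]-\partial_{z_k}[\mathfrak a_\eta z_iz_j/r^3]$. After expansion, the $V$-terms cancel except for the bounded local piece $-\frac12\eta(x)(z_i\delta_j^k+z_j\delta_i^k)$. The singular part is $\omega\otimes\omega$-valued, and the transverse part of $f$ enters it only through $\nabla_z\mathfrak a_\eta$, hence through $\nabla\eta$.

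The proof then rests on three computations you would have to supply:
\begin{enumerate}[label=(\alph*)]
 \item The identity $q=z\cdot\nabla_z\mathfrak a_\eta=-r^3\eta(y+z)$. This collapses the off-diagonal divergence to $-b_\eta$ in \eqref{eq:appendix-beta}.
 \item The argument that the $|x-y|^{-2}$ bound forces the diagonal contribution to be $c_i{}^k\delta_y$, with no derivatives of $\delta_y$. Here $c_i{}^k=\int b_\eta\dd x=\delta_i^k$.
 \item The moment formula \eqref{eq:appendix-Bmoments}. In it the symmetric part of $M_{ik}=\int y_if_k$ cancels, leaving $\frac12\partial_k\eta\,(M_{ik}-M_{ki})$ once the total force vanishes.
\end{enumerate}

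Your fallback in Step 4 is not viable and should be dropped. Since $S_\eta f$ is a compactly supported symmetric tensor for every $f$, \cref{lem:auto-compat} shows that $f-B_\eta f$ has zero force and torque. Hence $B_\eta f$ carries exactly the force and torque of $f$. Each of your six moment-matching defect profiles therefore has a nonzero Killing moment, so it can never equal $\diver T_\ell$ with $T_\ell$ compactly supported and symmetric.

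The construction closes only because the remainder vanishes \emph{identically} on compatible $f$, and that is what must be verified. The explicit finite-rank correction mentioned in the paper is this computed term $B_\eta f$, not an added tensor.
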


\begin{proof}
Define the straight-line kernel $K_\eta$ and the associated tensor $S_\eta f$ by \eqref{eq:appendix-aeta}--\eqref{eq:appendix-Seta}.  The self-contained calculation in \cref{prop:flat-sym-div} gives
\[
 \diver S_\eta f=f-B_\eta f,
\]
together with symmetry, smoothness, the stated support property, and the estimate
\[
 \|S_\eta f\|_1\le C_U\|f\|_1.
\]
The explicit moment formula \eqref{eq:appendix-Bmoments} reduces, under zero total force, to \eqref{eq:appendix-Btorque}; the torque conditions in \eqref{eq:compatibility} therefore imply $B_\eta f=0$.  We set $\Sloc=S_\eta$ on the compatible subspace.  The output is symmetric but need not be trace-free; its trace is absorbed into the pressure wherever the operator is used.
\end{proof}

\begin{lemma}[Automatic compatibility]\label{lem:auto-compat}
If $A\in C_c^\infty(U;\Sym_3)$ and $f=\diver A$, then $f$ satisfies \eqref{eq:compatibility}.
\end{lemma}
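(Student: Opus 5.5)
The plan is to test $f=\diver A$ against the Killing fields, namely constant vectors $e_j$ and the rotation fields $x_i e_j - x_j e_i$. Each test is an integration by parts with no boundary terms, because $A$ has compact support in $U$. The whole argument reduces to the observation that constant fields have vanishing gradient, while rotation fields have antisymmetric gradient, which pairs to zero against a symmetric matrix.

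\textbf{Force condition.} For each $i$ we have $(\diver A)_i=\partial_j A_{ij}$, and this is the divergence of a compactly supported smooth vector field. Hence
\[
\int_{\R^3} f_i\,\dd x=\int_{\R^3}\partial_jA_{ij}\,\dd x=0
\]
by the divergence theorem, applied on a ball containing $\supp A$.

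\textbf{Torque condition.} Fix $i<j$ and integrate by parts:
\[
\int_{\R^3} x_i\,\partial_kA_{jk}\,\dd x=-\int_{\R^3}\delta_{ik}A_{jk}\,\dd x=-\int_{\R^3}A_{ji}\,\dd x,
\]
and similarly
\[
\int_{\R^3} x_j\,\partial_kA_{ik}\,\dd x=-\int_{\R^3}A_{ij}\,\dd x.
\]
Subtracting the two identities gives
\[
\int_{\R^3}(x_if_j-x_jf_i)\,\dd x=\int_{\R^3}(A_{ij}-A_{ji})\,\dd x=0
\]
by the symmetry of $A$.

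There is no genuine obstacle. The only point requiring care is that the boundary terms vanish, and they do because $A\in C_c^\infty(U)$. We also note that the trace of $A$ plays no role, so the lemma holds for all symmetric $A$, not only trace-free ones. This matters because the output of \cref{lem:local-sym-div} is symmetric but need not be trace-free.
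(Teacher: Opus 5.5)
Your proposal is correct and follows essentially the same argument as the paper. Integration by parts, with no boundary terms because $A$ is compactly supported, gives zero total force, and reduces the torque to $\int(A_{ij}-A_{ji})\,\dd x$, which vanishes by symmetry of $A$.
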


\begin{proof}
Integration by parts gives $\int f_i=0$.  Moreover,
\[
 \int(x_if_j-x_jf_i)
 =-\int A_{ji}+\int A_{ij}=0
\]
because $A$ is symmetric.
\end{proof}

\section{The logarithmic transverse dipole}\label{sec:dipole}

The endpoint gain comes from a two-dimensional dipole which is harmonic on a logarithmically long annulus.

\begin{lemma}[Logarithmic dipole]\label{lem:dipole}
There are $L_0\ge1$ and smooth compactly supported fields
\[
 \psi_L\in C_c^\infty(B_2;\R),
 \qquad
 F_L,Q_L\in C_c^\infty(B_2;\R^2),
 \qquad L\ge L_0,
\]
such that
\begin{align}
 &\int_{\R^2}\psi_L\,\dd y=0,
 \qquad \|\psi_L\|_{L^2(\R^2)}=1,\label{eq:dipole-normal}\\
 &\Delta\psi_L=\diver F_L,
 \qquad \psi_L=\diver Q_L,\label{eq:dipole-potentials}
\end{align}
and
\begin{equation}\label{eq:dipole-estimates}
 \|\psi_L\|_{L^1(\R^2)}+\|F_L\|_{L^1(\R^2)}+\|Q_L\|_{L^2(\R^2)}
 +\|Q_L\|_{L^1(\R^2)}+\|\nabla Q_L\|_{L^1(\R^2)}
 \le CL^{-1/2}.
\end{equation}
The constant is independent of $L$.
\end{lemma}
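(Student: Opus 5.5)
The plan is to build $\psi_L$ as one partial derivative of a truncated, mollified logarithmic potential, then normalize. Every potential in \eqref{eq:dipole-potentials} will then be explicit, and every estimate in \eqref{eq:dipole-estimates} will reduce to a bounded integral times the normalizing factor $\sim L^{-1/2}$.

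\emph{Construction.} Fix a radial $\rho\in C_c^\infty(B_1)$ with $\int\rho=1$, set $\rho_\eps(y)=\eps^{-2}\rho(y/\eps)$ with $\eps=e^{-L}$, and let $G(y)=\frac1{2\pi}\log|y|$ and $G_\eps=G*\rho_\eps$. Fix a radial cutoff $\chi\in C_c^\infty(B_2)$ with $\chi=1$ on $B_1$. Define
\[
 \Phi_L=\chi\,G_\eps,\qquad \psi_L=c_L\,\partial_1\Phi_L,\qquad Q_L=c_L\,\Phi_L e_1,\qquad F_L=c_L\,(\Delta\Phi_L)e_1,
\]
where $c_L=\|\partial_1\Phi_L\|_{L^2}^{-1}$. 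All of these are smooth and supported in $B_2$. We have $\int\psi_L=0$ because $\psi_L$ is a derivative of a compactly supported function, and $\|\psi_L\|_{L^2}=1$ by the choice of $c_L$. Both identities in \eqref{eq:dipole-potentials} are immediate: $\diver Q_L=c_L\partial_1\Phi_L=\psi_L$, and $\Delta\psi_L=c_L\partial_1\Delta\Phi_L=\diver F_L$.

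\emph{Normalization, the key step.} By Newton's theorem, a radial mollification of the fundamental solution agrees with $G$ outside the support of the mollifier. Hence $G_\eps=G$ for $|y|\ge\eps$, and on the annulus $\eps<|y|<1$ we have $\partial_1\Phi_L=y_1/(2\pi|y|^2)$ exactly. Therefore
\[
 \|\partial_1\Phi_L\|_{L^2}^2\ge\int_{\eps<|y|<1}\frac{y_1^2}{4\pi^2|y|^4}\,\dd y=\frac1{4\pi}\log\frac1\eps=\frac{L}{4\pi}.
\]
This gives $c_L\le C L^{-1/2}$. We take $L_0\ge1$ so that $\eps<1$. A matching upper bound is not needed.

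\emph{Estimates.} It then suffices to show that $\|\partial_1\Phi_L\|_{L^1}$, $\|\Delta\Phi_L\|_{L^1}$, $\|\Phi_L\|_{L^1}$, $\|\Phi_L\|_{L^2}$ and $\|\nabla\Phi_L\|_{L^1}$ are bounded uniformly in $L$.

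The bounds on $\Phi_L$ and $\nabla\Phi_L$ use $|G_\eps|\le C(1+|\log|y||)$ and $|\nabla G_\eps|\le C|y|^{-1}$ on $B_2$. Both bounds are uniform in $\eps$: for $|y|\ge\eps$ they hold since $G_\eps=G$ there, and inside $B_\eps$ they follow from $|\nabla G_\eps|\le\|\rho_\eps\|_{1}\sup|\nabla G|$-type bounds, i.e. $|\nabla G_\eps|\lesssim\eps^{-1}$ and $|G_\eps|\lesssim L$ on a set of area $\eps^2$. The functions $\log|y|$ and $|y|^{-1}$ are integrable on $B_2$, and $\log|y|$ is square integrable there. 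The $B_\eps$ contributions are at most $C\eps^2L^2$ and $C\eps$, which are negligible.

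For the Laplacian,
\[
 \Delta\Phi_L=\chi\rho_\eps+2\nabla\chi\cdot\nabla G_\eps+G_\eps\Delta\chi.
\]
The first term has $L^1$ norm at most $1$. The last two terms are supported in the fixed annulus $1\le|y|\le2$, where $G_\eps=G$ is smooth and independent of $L$.

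Multiplying these bounds by $c_L\lesssim L^{-1/2}$ gives \eqref{eq:dipole-estimates}.

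\emph{Main obstacle.} The step to watch is the $L$-independence, because the inner scale $e^{-L}$ is extreme. The argument works only because every norm in \eqref{eq:dipole-estimates} is an $L^1$ norm or a low-order $L^2$ norm, for which $|y|^{-1}$ and $\log|y|$ are integrable. The logarithmic divergence appears only in $\|\partial_1\Phi_L\|_2^2$, and there it helps us through the normalization.
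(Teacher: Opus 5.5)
Your proof is correct. It differs from the paper's in one structural choice, and that choice makes it simpler.

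The paper truncates the dipole itself, $\widetilde\psi_\epsilon=\chi\,\partial_1\Phi_\epsilon$, so neither potential identity holds exactly. The quantities $\Delta\widetilde\psi_\epsilon-\diver(e\rho_\epsilon)$ and $\diver(\chi e\Phi_\epsilon)-\widetilde\psi_\epsilon$ are fixed, mean-zero functions $q_0,b_0$ on the outer annulus. The paper inverts these with the Bogovskii operator (\cref{lem:scalar-bogovskii}) and obtains $F=e\rho_\epsilon+H_0$ and $Q=\chi e\Phi_\epsilon-Q_0$.

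You instead truncate the potential, taking $\psi_L=c_L\partial_1(\chi G_\eps)$. Before normalization this differs from the paper's $\widetilde\psi_\epsilon$ exactly by the fixed function $(\partial_1\chi)G$, which is the paper's $b_0$. With this choice both identities become exact, with $Q_L=c_L\Phi_Le_1$ and $F_L=c_L(\Delta\Phi_L)e_1$. No Bogovskii step is needed and every field is explicit.

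You also use only the lower bound $\|\partial_1\Phi_L\|_2^2\ge L/(4\pi)$, whereas the paper records the two-sided asymptotic $A_\epsilon^2=L/(4\pi)+O(1)$. Every estimate has the form $c_L\times O(1)$, so the one-sided bound suffices. Your profile still equals the pure dipole $y_1/(2\pi|y|^2)$ on the long annulus, has zero mean, and is supported in a fixed compact subset of $B_2$. That is all the later sections use.

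Two small points of exposition:
\begin{enumerate}
\item Inside $B_\eps$ the right justification is scaling, not a bound involving $\sup|\nabla G|$, which is infinite. Write $G_\eps(y)=\frac1{2\pi}\log\eps+(G*\rho)(y/\eps)$ and $\nabla G_\eps(y)=\eps^{-1}(\nabla G*\rho)(y/\eps)$, where $G*\rho$ and $\nabla G*\rho$ are bounded on $B_1$. This yields your stated bounds $|G_\eps|\le C(1+|\log|y||)$ and $|\nabla G_\eps|\le C|y|^{-1}$ uniformly on $B_2$.
\item The bound $\|\chi\rho_\eps\|_1\le1$ requires $\rho\ge0$. Otherwise replace $1$ by $\|\rho\|_1$.
\end{enumerate}
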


\begin{proof}
Let
\[
 G(y)=\frac1{2\pi}\log|y|,
 \qquad \Delta G=\delta_0
\]
in $\R^2$.  Choose a nonnegative radial function $\rho\in C_c^\infty(B_{1/8})$ with $\int\rho=1$.  For
\[
 \epsilon=e^{-L}
\]
define
\[
 \rho_\epsilon(y)=\epsilon^{-2}\rho(y/\epsilon),
 \qquad
 \Phi_\epsilon=G*\rho_\epsilon,
 \qquad
 h_\epsilon=e\cdot\nabla\Phi_\epsilon,
 \quad e=(1,0).
\]
Radiality and the two-dimensional Newton theorem give the exact identities
\begin{equation}\label{eq:newton}
 \Phi_\epsilon(y)=G(y),
 \qquad
 h_\epsilon(y)=\frac{e\cdot y}{2\pi|y|^2}
 \quad\text{for }|y|>\epsilon/8.
\end{equation}
For completeness, if $|y|=R>r$, then
\[
 \frac1{2\pi}\int_0^{2\pi}\log|R-re^{i\theta}|\,\dd\theta=\log R;
\]
inserting polar coordinates in $G*\rho_\epsilon$ and using $\int\rho_\epsilon=1$ proves \eqref{eq:newton}.

Let $\chi\in C_c^\infty(B_2)$ be radial and satisfy $\chi=1$ on $B_1$, and set
\[
 \widetilde\psi_\epsilon=\chi h_\epsilon.
\]
It is odd in the first coordinate, hence has zero integral.  On $\epsilon<r<1$,
\[
 |\widetilde\psi_\epsilon(r,\theta)|^2
 =\frac{\cos^2\theta}{4\pi^2r^2}.
\]
Consequently,
\begin{equation}\label{eq:Aeps}
 A_\epsilon^2:=\|\widetilde\psi_\epsilon\|_2^2
 =\frac1{4\pi}\log\frac1\epsilon+O(1)
 =\frac L{4\pi}+O(1).
\end{equation}
The mollified core contributes $O(1)$ by scaling, and the outer cutoff region is fixed.  Similarly,
\begin{equation}\label{eq:psi-tilde-L1}
 \|\widetilde\psi_\epsilon\|_1\le C,
\end{equation}
because the main radial integral is $\int_\epsilon^1\dd r$.

We next construct the viscous flux.  Since $\Delta h_\epsilon=e\cdot\nabla\rho_\epsilon$,
\begin{equation}\label{eq:cutoff-lap}
 \Delta(\chi h_\epsilon)
 =\chi e\cdot\nabla\rho_\epsilon
 +2\nabla\chi\cdot\nabla h_\epsilon+h_\epsilon\Delta\chi.
\end{equation}
The first term equals $\diver(e\rho_\epsilon)$ because $\chi=1$ on $\supp\rho_\epsilon$.  By \eqref{eq:newton}, the last two terms form a fixed function $q_0\in C_c^\infty(B_2\setminus\overline{B_1})$ independent of $\epsilon$ for $L$ large.  Its integral is zero, since both $\Delta\widetilde\psi_\epsilon$ and $\diver(e\rho_\epsilon)$ have zero integral.  By \cref{lem:scalar-bogovskii}, choose the fixed field
\[
 H_0=\mathcal B_{B_2}q_0\in C_c^\infty(B_2;\R^2),
 \qquad \diver H_0=q_0,
\]
and set
\begin{equation}\label{eq:Ftilde}
 \widetilde F_\epsilon=e\rho_\epsilon+H_0.
\end{equation}
Then
\begin{equation}\label{eq:Ftilde-properties}
 \diver\widetilde F_\epsilon=\Delta\widetilde\psi_\epsilon,
 \qquad
 \|\widetilde F_\epsilon\|_1\le C.
\end{equation}

For the lower-order potential,
\[
 \diver(\chi e\Phi_\epsilon)
 =\chi e\cdot\nabla\Phi_\epsilon+(e\cdot\nabla\chi)\Phi_\epsilon
 =\widetilde\psi_\epsilon+b_0.
\]
On the support of $\nabla\chi$, \eqref{eq:newton} shows that $b_0$ is independent of $\epsilon$.  Also $\int b_0=0$, because the integral of the left-hand side and that of $\widetilde\psi_\epsilon$ both vanish.  By \cref{lem:scalar-bogovskii}, choose the fixed field
\[
 Q_0=\mathcal B_{B_2}b_0\in C_c^\infty(B_2;\R^2),
 \qquad \diver Q_0=b_0,
\]
and put
\begin{equation}\label{eq:Qtilde}
 \widetilde Q_\epsilon=\chi e\Phi_\epsilon-Q_0.
\end{equation}
Then
\begin{equation}\label{eq:Qtilde-div}
 \diver\widetilde Q_\epsilon=\widetilde\psi_\epsilon.
\end{equation}

We record uniform estimates needed later.  On the annulus $\epsilon/8<|y|<2$, $\Phi_\epsilon=G$, and hence
\[
 \int_{\epsilon/8<|y|<2}|\Phi_\epsilon|^2+|\Phi_\epsilon|+|\nabla\Phi_\epsilon|\,\dd y\le C.
\]
In the core $|y|\lesssim\epsilon$, scaling gives
\[
 |\Phi_\epsilon(y)|\le C(1+|\log\epsilon|),
 \qquad
 |\nabla\Phi_\epsilon(y)|\le C\epsilon^{-1}.
\]
Thus the core contributions are bounded respectively by
\[
 C\epsilon^2(1+L^2),
 \qquad C\epsilon^2(1+L),
 \qquad C\epsilon.
\]
It follows from \eqref{eq:Qtilde} that
\begin{equation}\label{eq:Qtilde-uniform}
 \|\widetilde Q_\epsilon\|_2
 +\|\widetilde Q_\epsilon\|_1
 +\|\nabla\widetilde Q_\epsilon\|_1
 \le C.
\end{equation}

Finally define
\[
 \psi_L=A_\epsilon^{-1}\widetilde\psi_\epsilon,
 \qquad
 F_L=A_\epsilon^{-1}\widetilde F_\epsilon,
 \qquad
 Q_L=A_\epsilon^{-1}\widetilde Q_\epsilon.
\]
Since $A_\epsilon\simeq\sqrt L$ by \eqref{eq:Aeps}, the identities and estimates in the statement follow from \eqref{eq:psi-tilde-L1}, \eqref{eq:Ftilde-properties}, \eqref{eq:Qtilde-div}, and \eqref{eq:Qtilde-uniform}.
\end{proof}

\begin{remark}\label{rem:logarithmic-mass}
On the main annulus the normalized profile is comparable to $L^{-1/2}r^{-1}\cos\theta$.  Hence every logarithmic radial shell carries comparable $L^2$ mass.  The Laplacian, however, is supported only in the mollified core and the fixed outer cutoff region.  This multiscale separation is the source of the factor $L^{-1/2}$.
\end{remark}

\section{Periodic logarithmic Mikado profiles}\label{sec:mikado}

The building blocks in this section live on the unit torus
\[
 \T^3=\R^3/\Z^3
\]
with normalized Haar measure.  Thus, throughout this section,
\[
 \langle G\rangle_{\T^3}:=\int_{\T^3}G(x)\,\dd x,
\]
and every displayed norm of $W_{k,L}$, $S_{k,L}$, $\Omega_{k,L}$, or $T_{k,L}$ is a norm over $\T^3$.  In later sections these fields are lifted periodically to $\R^3$ and multiplied by compactly supported amplitudes; the resulting modulated fields are then measured over $\R^3$.

\subsection{Disjoint rational geodesics}
For $k\in\Z^3\setminus\{0\}$ define
\[
 H_k:=\{tk\!\!\pmod{\Z^3}:t\in\R\}\subset\T^3.
\]
This is the closed one-dimensional subtorus obtained by projecting the line in direction $k$ to the flat torus.  If $\bar k$ is the primitive integer vector parallel to $k$ and with the same orientation, then $H_k=H_{\bar k}$ as subsets of $\T^3$.

\begin{lemma}\label{lem:disjoint-geodesics}
For the finite labeled set $\Lambda$ in \cref{lem:geometric}, there are translations $p_k\in\T^3$ such that the closed geodesics
\[
 \mathcal G_k=p_k+H_k,
 \qquad k\in\Lambda,
\]
are pairwise disjoint.  Consequently, they admit pairwise disjoint tubular neighborhoods of a common positive radius.
\end{lemma}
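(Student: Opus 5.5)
We choose the translations $p_k$ one at a time, by a dimension count: each geodesic is a one-dimensional subset of the three-dimensional torus, so generic translates of finitely many of them miss each other. Enumerate $\Lambda=\{k_1,\dots,k_N\}$. Since $\Lambda$ is a labeled set, two labels may share the same primitive direction, and then $H_{k_i}=H_{k_j}$. This is harmless, because disjointness is achieved by the translations, not by the directions.

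Suppose $p_{k_1},\dots,p_{k_{n-1}}$ have been chosen so that $\mathcal G_{k_1},\dots,\mathcal G_{k_{n-1}}$ are pairwise disjoint. The translate $p+H_{k_n}$ meets $\mathcal G_{k_j}$ exactly when
\[
 p\in \mathcal G_{k_j}-H_{k_n}=p_{k_j}+H_{k_j}+H_{k_n}.
\]
The set $H_{k_j}+H_{k_n}$ is the image of the smooth map $\R^2\to\T^3$, $(s,t)\mapsto sk_j+tk_n \pmod{\Z^3}$. Because $H_{k_j}$ and $H_{k_n}$ are compact, this image is in fact the image of the compact set $H_{k_j}\times H_{k_n}$ under the addition map. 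That map is Lipschitz from a two-dimensional manifold into the three-dimensional torus, so its image has Hausdorff dimension at most $2$, hence Lebesgue measure zero. The image is also closed, being a continuous image of a compact set. The union over $j<n$ of these bad sets is therefore a closed null set, and any $p_{k_n}$ in its complement (which is nonempty, indeed open and dense) makes $\mathcal G_{k_n}$ disjoint from all earlier geodesics. Induction on $n$ completes the construction. An alternative would be to choose every $p_k$ in a generic rational position by hand, but the measure argument avoids case analysis on parallel or coplanar directions.

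For the tubular neighbourhoods, note that the $\mathcal G_k$ are finitely many pairwise disjoint compact sets. Hence
\[
 \delta:=\min_{k\ne k'}\operatorname{dist}_{\T^3}(\mathcal G_k,\mathcal G_{k'})>0.
\]
Each $\mathcal G_k$ is a closed embedded geodesic, namely a projected straight line. For $r$ smaller than half the injectivity-type constant $1/(2|\bar k|)$ (this is enough because $\mathcal G_k$ is a flat closed geodesic of length $|\bar k|$, and distinct parallel strands of the projected line are separated by a distance depending only on $\bar k$), the map $(t,y)\mapsto p_k+t\bar k+y$ with $y\perp\bar k$, $|y|<r$, is a diffeomorphism onto the $r$-neighbourhood. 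Taking a common radius $r<\min(\delta/2,\min_k r_k)$ gives pairwise disjoint tubular neighbourhoods.

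The only mildly delicate point is this injectivity radius of each tube, that is, that the lifted line does not return close to itself. We get it from the fact that the parallel translates $t\bar k+\ell$, $\ell\in\Z^3$, of the line form a lattice of lines with a positive minimal mutual distance.
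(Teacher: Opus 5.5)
Your proposal is correct and follows essentially the same route as the paper. You choose the translations inductively outside the finite union of null sets $p_j+H_j-H_k$: the paper identifies these as cosets of rational subtori of dimension at most two, while you get nullity from a Lipschitz-image dimension count. Both arguments then conclude with compactness and finiteness, which give a positive mutual distance. Your extra injectivity discussion is also sound: distinct strands $\R\bar k+\ell$ lie at distance $|\ell\times\bar k|/|\bar k|\ge 1/|\bar k|$. It makes explicit the embedding condition that the paper simply builds into its later choice of $r_*$.
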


\begin{proof}
Choose the translations inductively.  For a fixed previously chosen $\mathcal G_j=p_j+H_j$, the translations $p$ for which $(p+H_k)\cap\mathcal G_j\ne\varnothing$ form the coset
\[
 p_j+H_j-H_k.
\]
This is a rational subtorus of dimension at most two and therefore has zero Haar measure in $\T^3$.  A finite union of such exceptional sets cannot fill $\T^3$, so the next translation can be chosen outside it.  The resulting curves are compact and pairwise disjoint; since $\Lambda$ is finite, their mutual distance is positive.
\end{proof}

Fix the translated geodesics from \cref{lem:disjoint-geodesics}.  Choose once and for all a number $r_*>0$ such that the closed $2r_*$-tubes around the $\mathcal G_k$ are pairwise disjoint and the normal-coordinate maps below are embeddings.  The number $r_*$ is a fixed geometric parameter depending only on the finite family of tubes; it is independent of $L$, of the later oscillation frequency $\gamma$, and of the small seed amplitude used in the iteration.

Let
\[
 \widehat k:=\frac{\bar k}{|\bar k|},
 \qquad
 \lambda_k:=|\bar k|,
\]
and choose an orthonormal basis $n_{k,1},n_{k,2}$ of $k^\perp=\bar k^\perp$.  The $2r_*$-tube around $\mathcal G_k$ is parametrized by
\begin{equation}\label{eq:tube-coordinates}
 X_k(s,y_1,y_2)
 :=p_k+s\widehat k+y_1n_{k,1}+y_2n_{k,2}\pmod{\Z^3},
 \qquad
 (s,y)\in\R/\lambda_k\Z\times B_{2r_*}.
\end{equation}
The variable $s$ is arclength along the closed geodesic: indeed,
\[
 X_k(s+\lambda_k,y)=X_k(s,y)
\]
because $\lambda_k\widehat k=\bar k\in\Z^3$.  Since $\widehat k,n_{k,1},n_{k,2}$ are orthonormal, $X_k$ is an isometry onto its image.  In these coordinates,
\begin{equation}\label{eq:tube-metric}
 \dd x=\dd s\,\dd y,
 \qquad
 \Delta=\partial_s^2+\Delta_y.
\end{equation}

\subsection{Transverse rescaling and three-dimensional fields}
Rescale the fields in \cref{lem:dipole} to the fixed transverse radius $r_*$ by
\begin{equation}\label{eq:transverse-rescaling}
 \psi_L^{r_*}(y)=r_*^{-1}\psi_L(y/r_*),
 \qquad
 F_L^{r_*}(y)=r_*^{-2}F_L(y/r_*),
 \qquad
 Q_L^{r_*}(y)=Q_L(y/r_*).
\end{equation}
The exponents are chosen so that both potential identities are preserved.  Directly,
\begin{align}
 \Delta_y\psi_L^{r_*}
 &=r_*^{-3}(\Delta\psi_L)(y/r_*)
 =\diver_yF_L^{r_*},\label{eq:scaled-viscous-potential}\\
 \diver_yQ_L^{r_*}
 &=r_*^{-1}(\diver Q_L)(y/r_*)
 =\psi_L^{r_*}.\label{eq:scaled-velocity-potential}
\end{align}
Moreover,
\begin{equation}\label{eq:scaled-profile-norms}
 \|\psi_L^{r_*}\|_{L^2(\R^2)}=1,
\end{equation}
and, because $r_*$ is fixed,
\begin{equation}\label{eq:scaled-small-norms}
 \begin{split}
 &\|\psi_L^{r_*}\|_{L^1(\R^2)}
 +\|F_L^{r_*}\|_{L^1(\R^2)}
 +\|Q_L^{r_*}\|_{L^2(\R^2)}\\
 &\qquad
 +\|Q_L^{r_*}\|_{L^1(\R^2)}
 +\|\nabla Q_L^{r_*}\|_{L^1(\R^2)}
 \le C_{r_*}L^{-1/2}.
 \end{split}
\end{equation}
The profiles in \cref{lem:dipole} are supported in a fixed compact subset of $B_2$; after rescaling they are supported strictly inside $B_{2r_*}$.

In the tube coordinates \eqref{eq:tube-coordinates}, define fields independent of the axial variable $s$ by
\begin{align}
 \phi_{k,L}(s,y)&=\lambda_k^{-1/2}\psi_L^{r_*}(y),\label{eq:phi-k}\\
 f_{k,L}(s,y)&=\lambda_k^{-1/2}F_L^{r_*}(y),\label{eq:f-k}\\
 q_{k,L}(s,y)&=\lambda_k^{-1/2}Q_L^{r_*}(y).\label{eq:q-k}
\end{align}
A two-dimensional vector $c=(c_1,c_2)$ is identified with the normal vector
\[
 c_1n_{k,1}+c_2n_{k,2}\in k^\perp.
\]
Thus $f_{k,L}$ and $q_{k,L}$ have no axial component and take values in $k^\perp$.  Extend $\phi_{k,L}$, $f_{k,L}$, and $q_{k,L}$ by zero outside their tube.  Their supports stay a positive distance from the tube boundary, so these extensions are smooth periodic fields on $\T^3$.

The flat coordinates give the identities used below.  Since the fields are independent of $s$ and the vector fields $f_{k,L},q_{k,L}$ are purely normal,
\begin{equation}\label{eq:transverse-identities}
 \diver f_{k,L}=\Delta\phi_{k,L},
 \qquad
 \diver q_{k,L}=\phi_{k,L},
\end{equation}
and
\begin{equation}\label{eq:axial-independence}
 k\cdot\nabla\phi_{k,L}=0,
 \qquad
 (k\cdot\nabla)f_{k,L}=0,
 \qquad
 (k\cdot\nabla)q_{k,L}=0.
\end{equation}
Indeed, the three-dimensional divergence of a normal field independent of $s$ is its two-dimensional divergence in $y$, while \eqref{eq:tube-metric} gives $\Delta\phi_{k,L}=\Delta_y\phi_{k,L}$.

Define
\begin{equation}\label{eq:mikado-definitions}
 W_{k,L}=\phi_{k,L}k,
 \qquad
 S_{k,L}=k\otimes f_{k,L}+f_{k,L}\otimes k,
 \qquad
 \Omega_{k,L}=k\otimes q_{k,L}-q_{k,L}\otimes k.
\end{equation}

\begin{proposition}[Logarithmic Mikado family]\label{prop:mikado}
For every $L\ge L_0$ and $k\in\Lambda$, the fields in \eqref{eq:mikado-definitions} are smooth periodic fields satisfying
\begin{align}
 &\diver W_{k,L}=0,
 \qquad \diver(W_{k,L}\otimes W_{k,L})=0,\label{eq:W-Euler}\\
 &\int_{\T^3}W_{k,L}\,\dd x=0,
 \qquad
 \int_{\T^3}W_{k,L}\otimes W_{k,L}\,\dd x=k\otimes k,\label{eq:W-averages}\\
 &S_{k,L}=S_{k,L}^T,
 \quad \operatorname{tr}S_{k,L}=0,
 \quad \diver S_{k,L}=\Delta W_{k,L},\label{eq:S-properties}\\
 &\Omega_{k,L}^T=-\Omega_{k,L},
 \quad \diver\Omega_{k,L}=W_{k,L}.\label{eq:Omega-properties}
\end{align}
For different $k$, the supports of $W_{k,L}$, $S_{k,L}$, and $\Omega_{k,L}$ lie in pairwise disjoint tubes.  Moreover,
\begin{equation}\label{eq:mikado-norms}
 \|W_{k,L}\|_{L^2(\T^3)}=|k|,
\end{equation}
while
\begin{equation}\label{eq:mikado-small-norms}
 \begin{split}
 &\|W_{k,L}\|_{L^1(\T^3)}+\|S_{k,L}\|_{L^1(\T^3)}
 +\|\Omega_{k,L}\|_{L^2(\T^3)}\\
 &\qquad
 +\|\Omega_{k,L}\|_{L^1(\T^3)}
 +\|\nabla\Omega_{k,L}\|_{L^1(\T^3)}
 \le C_\Lambda L^{-1/2}.
 \end{split}
\end{equation}
Finally,
\begin{equation}\label{eq:T-def}
 T_{k,L}=W_{k,L}\otimes W_{k,L}-k\otimes k
\end{equation}
satisfies
\begin{equation}\label{eq:T-properties}
 T_{k,L}=T_{k,L}^T,
 \qquad \int_{\T^3}T_{k,L}\,\dd x=0,
 \qquad \diver T_{k,L}=0,
 \qquad \|T_{k,L}\|_{L^1(\T^3)}\le C_\Lambda.
\end{equation}
\end{proposition}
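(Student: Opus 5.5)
The plan is to verify everything in the flat tube coordinates \eqref{eq:tube-coordinates}, using that $X_k$ is an isometry with $\dd x=\dd s\,\dd y$ and $\Delta=\partial_s^2+\Delta_y$ (\eqref{eq:tube-metric}). The identities \eqref{eq:transverse-identities} and \eqref{eq:axial-independence} carry most of the algebra. Smoothness and periodicity are already recorded: the profiles are supported strictly inside the $2r_*$-tubes. The disjointness of supports for different $k$ follows from \cref{lem:disjoint-geodesics} and the choice of $r_*$.

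\emph{Algebraic identities.} For $W_{k,L}=\phi_{k,L}k$ we have $\diver W_{k,L}=k\cdot\nabla\phi_{k,L}=0$. Likewise $\diver(W\otimes W)=(k\cdot\nabla\phi_{k,L}^2)k=0$, and $\Delta W=(\Delta\phi_{k,L})k$.

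For $S_{k,L}$, symmetry is immediate, and $\operatorname{tr}S_{k,L}=2k\cdot f_{k,L}=0$ because $f_{k,L}\in k^\perp$. Its divergence is
\[
\partial_j(k_if_j+f_ik_j)=k_i\diver f_{k,L}+(k\cdot\nabla)f_{k,L,i}=k_i\Delta\phi_{k,L},
\]
by \eqref{eq:transverse-identities}--\eqref{eq:axial-independence}. Hence $\diver S_{k,L}=\Delta W_{k,L}$.

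The same computation applies to $\Omega_{k,L}$. Antisymmetry is clear, and
\[
\diver\Omega_{k,L}=k\,\diver q_{k,L}-(k\cdot\nabla)q_{k,L}=\phi_{k,L}k=W_{k,L}.
\]

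\emph{Averages and norms.} The zero mean of $W_{k,L}$ follows from $\int\psi_L=0$ in \eqref{eq:dipole-normal}, integrating over $s\in\R/\lambda_k\Z$ and $y$. Equivalently, it follows from $W=\diver\Omega$.

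For the second moment, the tube has axial length $\lambda_k$ and normalized Haar measure has total mass one. With \eqref{eq:scaled-profile-norms} this gives
\[
\int_{\T^3}\phi_{k,L}^2\,\dd x=\lambda_k\cdot\lambda_k^{-1}\|\psi_L^{r_*}\|_{L^2(\R^2)}^2=1.
\]
Hence $\int W\otimes W=k\otimes k$ and $\|W_{k,L}\|_{L^2}=|k|$. A subtle point is to check that the embedding $X_k$ covers the tube exactly once. This holds because $s$ ranges over one period $\lambda_k$ of the primitive vector $\bar k$; this is why $\lambda_k=|\bar k|$ rather than $|k|$ is used.

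The $L^p$ norms in \eqref{eq:mikado-small-norms} then reduce, by the same Fubini computation, to $\lambda_k^{1-1/2}|k|$ times the corresponding transverse norms in \eqref{eq:scaled-small-norms}. For $\nabla\Omega$ we use that $\nabla q_{k,L}$ involves only $\nabla_yQ_L^{r_*}$, since there is no $s$-dependence and the frame $n_{k,i}$ is constant. These are bounded by $C_\Lambda L^{-1/2}$, with the constant uniform because $\Lambda$ is finite.

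\emph{The tensor $T_{k,L}$.} Symmetry, zero mean and zero divergence follow from \eqref{eq:W-Euler}--\eqref{eq:W-averages}, since $k\otimes k$ is constant. The $L^1$ bound is
\[
\|T_{k,L}\|_{L^1}\le\|W\|_{L^2}^2+|k|^2=2|k|^2\le C_\Lambda.
\]

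No step is a genuine obstacle. The only point requiring care is the bookkeeping of the axial period $\lambda_k$ against the normalization $\lambda_k^{-1/2}$, so that the exact identity $\int W\otimes W=k\otimes k$ holds rather than holding up to a constant.
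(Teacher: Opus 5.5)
Your proposal is correct and follows essentially the same route as the paper: you verify everything directly in the flat tube coordinates via \eqref{eq:transverse-identities}--\eqref{eq:axial-independence}, obtain the exact normalization $\int_{\T^3}\phi_{k,L}^2\,\dd x=\lambda_k\cdot\lambda_k^{-1}=1$ from the axial period, and use Fubini to transfer the transverse bounds \eqref{eq:scaled-small-norms}. The only slip is cosmetic and is absorbed into $C_\Lambda$: the Fubini factor is $\lambda_k^{1/p-1/2}$, so it equals $1$ rather than $\lambda_k^{1/2}$ for $\|\Omega_{k,L}\|_{L^2(\T^3)}$, and $S_{k,L}$ and $\Omega_{k,L}$ each pick up an extra factor of at most $2$ from their two terms.
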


\begin{proof}
Since $W_{k,L}=\phi_{k,L}k$ and $k\cdot\nabla\phi_{k,L}=0$,
\[
 \diver W_{k,L}=k\cdot\nabla\phi_{k,L}=0
\]
and
\[
 \diver(W_{k,L}\otimes W_{k,L})
 =k\,(k\cdot\nabla)(\phi_{k,L}^2)=0.
\]
The factor $\lambda_k^{-1/2}$ is chosen so that
\begin{equation}\label{eq:phi-normalization}
 \begin{split}
 \int_{\T^3}\phi_{k,L}^2\,\dd x
 &=\int_0^{\lambda_k}\lambda_k^{-1}\,\dd s
   \int_{\R^2}|\psi_L^{r_*}(y)|^2\,\dd y\\
 &=1.
 \end{split}
\end{equation}
Also,
\[
 \int_{\R^2}\psi_L^{r_*}(y)\,\dd y
 =r_*\int_{\R^2}\psi_L(y)\,\dd y=0.
\]
Therefore \eqref{eq:W-averages} holds, and \eqref{eq:phi-normalization} also gives \eqref{eq:mikado-norms}.

The tensor $S_{k,L}$ is symmetric by definition.  Since $f_{k,L}\in k^\perp$,
\[
 \operatorname{tr}S_{k,L}=2k\cdot f_{k,L}=0.
\]
Using \eqref{eq:transverse-identities} and \eqref{eq:axial-independence},
\[
 \diver S_{k,L}
 =k\,\diver f_{k,L}+(k\cdot\nabla)f_{k,L}
 =k\Delta\phi_{k,L}
 =\Delta W_{k,L}.
\]
Similarly, $\Omega_{k,L}$ is antisymmetric and
\[
 \diver\Omega_{k,L}
 =k\,\diver q_{k,L}-(k\cdot\nabla)q_{k,L}
 =k\phi_{k,L}
 =W_{k,L}.
\]

All three fields associated with $k$ are supported in the fixed tube around $\mathcal G_k$, hence supports belonging to distinct labels are disjoint.  The tube coordinates give
\begin{align*}
 \|W_{k,L}\|_{L^1(\T^3)}
 &=|k|\lambda_k^{1/2}\|\psi_L^{r_*}\|_{L^1(\R^2)},\\
 \|S_{k,L}\|_{L^1(\T^3)}
 &\le2|k|\lambda_k^{1/2}\|F_L^{r_*}\|_{L^1(\R^2)},\\
 \|\Omega_{k,L}\|_{L^2(\T^3)}
 &\le2|k|\|Q_L^{r_*}\|_{L^2(\R^2)},\\
 \|\Omega_{k,L}\|_{L^1(\T^3)}
 &\le2|k|\lambda_k^{1/2}\|Q_L^{r_*}\|_{L^1(\R^2)},\\
 \|\nabla\Omega_{k,L}\|_{L^1(\T^3)}
 &\le2|k|\lambda_k^{1/2}\|\nabla Q_L^{r_*}\|_{L^1(\R^2)}.
\end{align*}
Because $\Lambda$ and $r_*$ are fixed, \eqref{eq:scaled-small-norms} proves \eqref{eq:mikado-small-norms}.

Finally, $T_{k,L}$ is symmetric, its mean vanishes by \eqref{eq:W-averages}, and its divergence vanishes by \eqref{eq:W-Euler}.  Since $|\T^3|=1$,
\[
 \|T_{k,L}\|_{L^1(\T^3)}
 \le\|W_{k,L}\otimes W_{k,L}\|_{L^1(\T^3)}+|k\otimes k|
 =2|k|^2,
\]
which proves \eqref{eq:T-properties} after enlarging $C_\Lambda$.
\end{proof}

If $\gamma\in\N$, the map $x\mapsto\gamma x$ is a measure-preserving covering of $\T^3$.  Consequently,
\begin{equation}\label{eq:covering-norms}
 \|G(\gamma\,\cdot)\|_{L^p(\T^3)}=\|G\|_{L^p(\T^3)},
 \qquad
 \int_{\T^3}G(\gamma x)\,\dd x=\int_{\T^3}G(x)\,\dd x
\end{equation}
for every periodic $G$ and $1\le p<\infty$.  Inverse images of disjoint sets remain disjoint, so the supports of $W_{k,L}(\gamma x)$, $S_{k,L}(\gamma x)$, and $\Omega_{k,L}(\gamma x)$ remain pairwise disjoint as $k$ varies.

In the whole-space sections below, the notation $G(\gamma x)$ means the $\Z^3$-periodic lift of $G$ evaluated at $x\in\R^3$.  Such a lift is not integrable over all of $\R^3$ by itself.  It always appears multiplied by a compactly supported slow amplitude; norms of those products are whole-space norms and are controlled by \cref{lem:periodic-cells}.

\section{Compactly supported divergence-free localization}\label{sec:localization}

From this section onward, $v,w,z,u,R$ and all modulated stresses are fields on $\R^3$; their displayed norms are therefore whole-space norms.  Norms of the unmodulated periodic profiles $W_{k,L}$, $S_{k,L}$, $\Omega_{k,L}$, and $T_{k,L}$ continue to be written with the domain $\T^3$ explicitly.

Let $U\subset\R^3$ be the fixed ball in the Reynolds system, and let
\[
 a_k\in C_c^\infty(U),\qquad k\in\Lambda.
\]
Define the principal perturbation
\begin{equation}\label{eq:v-def}
 v(x)=\sum_{k\in\Lambda}a_k(x)W_{k,L}(\gamma x).
\end{equation}
We localize it exactly by means of the antisymmetric potentials:
\begin{equation}\label{eq:w-def}
 w_i(x)=\sum_{k\in\Lambda}\gamma^{-1}\partial_j
 \left(a_k(x)(\Omega_{k,L})_{ij}(\gamma x)\right).
\end{equation}
Since each $a_k\Omega_{k,L}(\gamma\cdot)$ is antisymmetric,
\begin{equation}\label{eq:w-divfree}
 \diver w=\gamma^{-1}\partial_i\partial_j
 \sum_k a_k(\Omega_{k,L})_{ij}(\gamma x)=0.
\end{equation}
Moreover, $w\in C_c^\infty(U;\R^3)$.  Expanding \eqref{eq:w-def} gives
\begin{equation}\label{eq:w-v-z}
 w=v+z,
 \qquad
 z_i=\sum_{k\in\Lambda}\gamma^{-1}(\partial_j a_k)
 (\Omega_{k,L})_{ij}(\gamma x).
\end{equation}

\begin{lemma}[Localized velocity estimates]\label{lem:localized-velocity}
Let $a=(a_k)_{k\in\Lambda}\subset C_c^\infty(U)$.  Uniformly in $L\ge L_0$ and $\gamma\in\N$,
\begin{align}
 \|v\|_{L^1(\R^3)}&\le C_aL^{-1/2},\label{eq:v-L1}\\
 \|z\|_{L^1(\R^3)}+\|z\|_{L^2(\R^3)}&\le C_a(\gamma\sqrt L)^{-1},\label{eq:z-L12}\\
 \|\nabla z\|_{L^1(\R^3)}&\le C_aL^{-1/2},\label{eq:z-W11}
\end{align}
and
\begin{equation}\label{eq:v-L2}
 \|v\|_{L^2(\R^3)}
 \le \left(\sum_{k\in\Lambda}|k|^2\|a_k\|_{L^2(\R^3)}^2\right)^{1/2}
 +C_a\gamma^{-1/2}.
\end{equation}
Consequently,
\begin{equation}\label{eq:w-L2}
 \|w\|_{L^2(\R^3)}
 \le \left(\sum_{k\in\Lambda}|k|^2\|a_k\|_{L^2(\R^3)}^2\right)^{1/2}
 +C_a\gamma^{-1/2}+C_a(\gamma\sqrt L)^{-1}.
\end{equation}
For every fixed $\varphi\in C_c^\infty(\R^3;\R^3)$,
\begin{equation}\label{eq:test-protection}
 \left|\int_{\R^3}w\cdot\varphi\,\dd x\right|
 \le C_{a,\varphi}(\gamma\sqrt L)^{-1}.
\end{equation}
In \eqref{eq:v-L1}--\eqref{eq:w-L2}, one may take
\[
 C_a=C\bigl(U,\Lambda,\AmpNorm{2}{a}\bigr).
\]
In \eqref{eq:test-protection}, one may take
\[
 C_{a,\varphi}=C\bigl(U,\Lambda,\AmpNorm{0}{a},\|\varphi\|_{C^1}\bigr).
\]
These constants are independent of positive derivative norms of the profiles.
\end{lemma}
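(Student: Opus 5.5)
The plan is to reduce every estimate to \cref{lem:periodic-cells} together with the periodic profile bounds of \cref{prop:mikado}. In each case a whole-space norm of (slow amplitude)$\times$(periodic profile at $\gamma x$) is controlled by $\|a\|_{C^m}$ times a periodic norm, so that no derivative of the profiles ever enters. All fields are supported in the fixed bounded set $U$, so \eqref{eq:cell-Lp} applies with $K=U$.

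\emph{The $L^1$ bounds.} For \eqref{eq:v-L1} we bound pointwise $|v|\le\sum_k\|a_k\|_{C^0}|W_{k,L}(\gamma x)|\one_U$. Then \eqref{eq:cell-Lp} with $p=1$ and \eqref{eq:mikado-small-norms} give $C_UL^{-1/2}$. For \eqref{eq:z-L12} the same argument applies to $z$, which carries $\gamma^{-1}\|\nabla a_k\|_{C^0}|\Omega_{k,L}(\gamma x)|$. Using \eqref{eq:cell-Lp} with $p=1,2$ and the $L^1$ and $L^2$ bounds on $\Omega_{k,L}$ in \eqref{eq:mikado-small-norms} yields $(\gamma\sqrt L)^{-1}$. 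For \eqref{eq:z-W11} we differentiate. The term with $\nabla^2a_k$ is handled exactly as before. In the term $\gamma^{-1}\partial_ja_k\,\gamma(\nabla\Omega_{k,L})(\gamma x)$ the factor $\gamma$ cancels, and \eqref{eq:cell-Lp} with the bound on $\|\nabla\Omega_{k,L}\|_{L^1(\T^3)}$ gives $L^{-1/2}$. This is where $\AmpNorm{2}{a}$ enters.

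\emph{The $L^2$ bounds.} For \eqref{eq:v-L2} the supports of $W_{k,L}(\gamma\cdot)$ are pairwise disjoint in $k$ by the covering remark after \cref{prop:mikado}. Hence the cross terms vanish and
\[
 \|v\|_{L^2(\R^3)}^2=\sum_k\int a_k^2|W_{k,L}(\gamma x)|^2\,\dd x.
\]
Applying \eqref{eq:cell-average} with $F=W_{k,L}$ and $\|W_{k,L}\|_{L^2(\T^3)}=|k|$ from \eqref{eq:mikado-norms} gives $\sum_k|k|^2\|a_k\|_2^2+O(\gamma^{-1})$. Taking square roots with $\sqrt{A+B}\le\sqrt A+\sqrt B$ yields the $\gamma^{-1/2}$ error. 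Then \eqref{eq:w-L2} follows from $w=v+z$, the triangle inequality, and \eqref{eq:z-L12}.

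\emph{The weak bound.} For \eqref{eq:test-protection} we use the divergence form \eqref{eq:w-def} and integrate by parts once:
\[
 \int w\cdot\varphi\,\dd x=-\gamma^{-1}\sum_k\int a_k(\Omega_{k,L})_{ij}(\gamma x)\,\partial_j\varphi_i\,\dd x.
\]
This is bounded by $\gamma^{-1}\AmpNorm{0}{a}\|\varphi\|_{C^1}\sum_k\|\Omega_{k,L}(\gamma\cdot)\|_{L^1(U)}$, which is $\lesssim(\gamma\sqrt L)^{-1}$ by \eqref{eq:cell-Lp} and \eqref{eq:mikado-small-norms}. Only $\AmpNorm{0}{a}$ and $\|\varphi\|_{C^1}$ appear, as claimed.

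\emph{Main obstacle.} The one delicate step is the $\nabla z$ estimate, since differentiating the fast profile must not cost more than the $\gamma^{-1}$ prefactor. This works only because \eqref{eq:mikado-small-norms} controls $\|\nabla\Omega_{k,L}\|_{L^1}$ uniformly with the $L^{-1/2}$ gain; any higher derivative of the profile, which blows up like $e^{L}$, must be avoided. Everything else is a routine application of the cell lemma.
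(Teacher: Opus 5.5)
Your proposal is correct and follows the paper's proof essentially step for step. You obtain the $L^1$ and $L^2$ bounds from the periodic cell lemma together with \eqref{eq:mikado-small-norms}. The $\nabla z$ bound comes from cancelling the chain-rule factor $\gamma$ against $\gamma^{-1}$ and using $\|\nabla\Omega_{k,L}\|_{L^1(\T^3)}\lesssim L^{-1/2}$. The bound on $\|v\|_{L^2}$ uses disjoint supports and \eqref{eq:cell-average}, and the pairing bound uses one integration by parts in \eqref{eq:w-def}.
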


\begin{proof}
The estimates \eqref{eq:v-L1} and \eqref{eq:z-L12} follow from \cref{lem:periodic-cells,eq:mikado-small-norms}.  Differentiating the corrector gives
\[
 \partial_\ell z_i
 =\sum_k\gamma^{-1}(\partial_{\ell j}a_k)(\Omega_{k,L})_{ij}(\gamma x)
 +\sum_k(\partial_ja_k)(\partial_{y_\ell}\Omega_{k,L})_{ij}(\gamma x),
\]
which proves \eqref{eq:z-W11} using the periodic bounds $\|\Omega_{k,L}\|_{L^1(\T^3)}$ and $\|\nabla\Omega_{k,L}\|_{L^1(\T^3)}$.

Because the velocity supports are disjoint in $k$,
\[
 |v(x)|^2=\sum_k a_k(x)^2|W_{k,L}(\gamma x)|^2.
\]
Apply \cref{eq:cell-average} to each summand and use \eqref{eq:mikado-norms}.  The error in the square of the norm is $O_a(\gamma^{-1})$, and \eqref{eq:v-L2} follows from $\sqrt{A+B}\le\sqrt A+\sqrt B$.  Together with \eqref{eq:z-L12}, this gives \eqref{eq:w-L2}.

Finally, integrate \eqref{eq:w-def} by parts:
\[
 \int w_i\varphi_i
 =-\gamma^{-1}\sum_k\int a_k(x)(\Omega_{k,L})_{ij}(\gamma x)
 \partial_j\varphi_i(x)\,\dd x.
\]
The periodic cell estimate and the bound $\|\Omega_{k,L}\|_{L^1(\T^3)}\lesssim L^{-1/2}$ from \eqref{eq:mikado-small-norms} yield \eqref{eq:test-protection}.
\end{proof}

\section{The modulated viscosity identity}\label{sec:viscosity}

The crucial point is to represent $\Delta(aW(\gamma x))$ as the divergence of a compactly supported symmetric tensor without estimating $\nabla W$.

Fix one direction $k\in\Lambda$ and suppress the indices $k,L$.  Thus
\[
 W=\psi k,
 \qquad k\cdot\nabla_y\psi=0,
 \qquad \diver S=\Delta W,
 \qquad S=S^T,
 \qquad \operatorname{tr}S=0.
\]
Let
\begin{equation}\label{eq:Pk-b}
 P_k=I-\widehat k\otimes\widehat k,
 \qquad \widehat k=\frac{k}{|k|},
 \qquad \beta=P_k\nabla a.
\end{equation}
Define
\begin{equation}\label{eq:C-mod}
 C[a,W](x)=2\psi(\gamma x)\bigl(k\otimes \beta(x)+\beta(x)\otimes k\bigr).
\end{equation}
It is symmetric and trace-free because $\beta\perp k$.

\begin{lemma}[Modulated viscosity]\label{lem:modulated-viscosity}
For every $a\in C_c^\infty(U)$ there exists $\calE_\gamma[a,W,S]\in C_c^\infty(U;\Sym_3)$ such that
\begin{equation}\label{eq:viscosity-exact}
 \diver\calE_\gamma[a,W,S]
 =\Delta\bigl(a(x)W(\gamma x)\bigr)
\end{equation}
and
\begin{equation}\label{eq:viscosity-bound}
 \|\calE_\gamma[a,W,S]\|_{L^1(\R^3)}
 \le C\bigl(U,\|a\|_{C^2}\bigr)\bigl(\gamma\|S\|_{L^1(\T^3)}+\|W\|_{L^1(\T^3)}\bigr).
\end{equation}
In particular, for the logarithmic profiles,
\begin{equation}\label{eq:viscosity-log}
 \|\calE_\gamma[a,W_{k,L},S_{k,L}]\|_{L^1(\R^3)}
 \le C\bigl(U,\Lambda,\|a\|_{C^2}\bigr)\frac\gamma{\sqrt L}.
\end{equation}
The constants are independent of $\nabla W$, $\nabla S$, and the inner scale $e^{-L}$.
\end{lemma}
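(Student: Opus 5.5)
The plan is to expand $\Delta\bigl(a(x)W(\gamma x)\bigr)$ by the Leibniz rule. Each term carrying a fast derivative of $W$ will be written as an exact divergence of an explicit symmetric tensor: the $\gamma^2$ term through $S$, and the $\gamma$ cross term through the tensor $C[a,W]$ of \eqref{eq:C-mod}. Whatever is left will be a smooth, compactly supported, compatible vector field involving only $\psi(\gamma x)$ and $S(\gamma x)$ themselves, and it is inverted with $\Sloc$ from \cref{lem:local-sym-div}. With $W=\psi k$,
\[
 \Delta\bigl(aW(\gamma\cdot)\bigr)
 =(\Delta a)W(\gamma x)
 +2\gamma k\,(\nabla a\cdot\nabla_y\psi)(\gamma x)
 +\gamma^2a\,(\Delta W)(\gamma x).
\]

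\emph{Second-order term.} Since $\Delta W=\diver S$, we have $\gamma^2a(\diver S)(\gamma x)=\gamma\diver\bigl(aS(\gamma x)\bigr)-\gamma S(\gamma x)\nabla a$. The tensor $\gamma aS(\gamma\cdot)$ is symmetric and trace-free. By \cref{lem:periodic-cells} its $L^1$ norm is at most $C\gamma\|a\|_{C^0}\|S\|_{L^1(\T^3)}$, and the remainder $-\gamma S(\gamma x)\nabla a$ obeys the same bound with $\|a\|_{C^1}$.

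\emph{Cross term.} Because $k\cdot\nabla_y\psi=0$, we have $\nabla a\cdot\nabla_y\psi=\beta\cdot\nabla_y\psi$ with $\beta=P_k\nabla a$. A direct computation gives
\[
 \diver\bigl(\psi(\gamma x)\,k\otimes\beta\bigr)=k\bigl(\gamma\,\beta\cdot(\nabla_y\psi)(\gamma x)+\psi(\gamma x)\diver\beta\bigr),
 \qquad
 \diver\bigl(\psi(\gamma x)\,\beta\otimes k\bigr)=\psi(\gamma x)(k\cdot\nabla)\beta,
\]
where the second identity again uses $k\cdot\nabla_y\psi=0$. Hence
\[
 2\gamma k(\beta\cdot\nabla_y\psi)(\gamma x)=\diver C[a,W]-2\psi(\gamma x)\bigl(k\diver\beta+(k\cdot\nabla)\beta\bigr).
\]
The tensor $C[a,W]$ carries no factor of $\gamma$. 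Its $L^1$ norm and that of the remainder are bounded by $C(U,\|a\|_{C^2})\|W\|_{L^1(\T^3)}$ via \cref{lem:periodic-cells}. The zeroth-order term $(\Delta a)W(\gamma x)$ is bounded in the same way.

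\emph{Closing.} Set $E_0=\gamma aS(\gamma\cdot)+C[a,W]$, which lies in $C_c^\infty(U;\Sym_3)$, and
\[
 f:=\Delta\bigl(aW(\gamma\cdot)\bigr)-\diver E_0 .
\]
By the computations above, $f$ equals the sum of the three remainders, so
\[
 \|f\|_{L^1}\le C(U,\|a\|_{C^2})\bigl(\gamma\|S\|_{L^1(\T^3)}+\|W\|_{L^1(\T^3)}\bigr).
\]
The step needing care is the compatibility \eqref{eq:compatibility} of $f$, which is required to apply $\Sloc$. The term $\diver E_0$ is compatible by \cref{lem:auto-compat}. The term $\Delta\bigl(aW(\gamma\cdot)\bigr)$ is the Laplacian of a compactly supported smooth field, so integrating by parts twice against $1$ and against $x_i e_j-x_j e_i$ gives zero, because these test functions are harmonic. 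We then define $\calE_\gamma[a,W,S]=E_0+\Sloc f$. \Cref{lem:local-sym-div} gives \eqref{eq:viscosity-exact}, support in $U$, and the bound $\|\Sloc f\|_{L^1}\le C_U\|f\|_{L^1}$, which yields \eqref{eq:viscosity-bound}.

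No derivatives of $\psi$ or $S$ enter any of these norms, so the constants are independent of $\nabla W$, $\nabla S$ and the inner scale $e^{-L}$. Finally, \eqref{eq:viscosity-log} follows by inserting \eqref{eq:mikado-small-norms}: both $\|S_{k,L}\|_{L^1(\T^3)}$ and $\|W_{k,L}\|_{L^1(\T^3)}$ are $O(L^{-1/2})$, and $\gamma\ge1$.
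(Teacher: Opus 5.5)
Your proposal is correct and follows the paper's proof essentially step by step. It uses the same $E_0=\gamma aS(\gamma x)+C[a,W]$, the same derivative-free remainder (involving only $\psi(\gamma x)$, $S(\gamma x)$ and up to two derivatives of $a$), and the same correction by $\Sloc$; the only difference is how you check compatibility of $\Delta\bigl(aW(\gamma\cdot)\bigr)$: you integrate by parts against the harmonic Killing fields, while the paper writes $\Delta V=\diver A[V]$ with the symmetric tensor $A[V]=\nabla V+(\nabla V)^T-(\diver V)I$ and invokes \cref{lem:auto-compat}, and both arguments are equally valid.
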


\begin{proof}
Set
\begin{equation}\label{eq:E0}
 E_0=a(x)\gamma S(\gamma x)+C[a,W](x).
\end{equation}
We compute the two divergences.  First,
\begin{equation}\label{eq:div-aS}
 \diver(a\gamma S(\gamma x))
 =a\gamma^2\Delta W(\gamma x)+\gamma S(\gamma x)\nabla a.
\end{equation}
Second, using $k\cdot\nabla_y\psi=0$ and $\beta\cdot\nabla_y\psi=\nabla a\cdot\nabla_y\psi$,
\begin{align}
 \diver C[a,W]
 &=2\gamma\bigl(\nabla a\cdot\nabla_y\psi(\gamma x)\bigr)k\notag\\
 &\quad+2\psi(\gamma x)\bigl(k\,\diver\beta+(k\cdot\nabla)\beta\bigr).
 \label{eq:div-C}
\end{align}
On the other hand,
\begin{align}
 \Delta(aW(\gamma x))
 &=a\gamma^2\Delta W(\gamma x)
 +2\gamma\bigl(\nabla a\cdot\nabla_y\psi(\gamma x)\bigr)k\notag\\
 &\quad +(\Delta a)\psi(\gamma x)k.
 \label{eq:lap-product}
\end{align}
Subtracting \eqref{eq:div-aS} and \eqref{eq:div-C} from \eqref{eq:lap-product}, define
\begin{equation}\label{eq:g-visc}
 \begin{split}
 g={}&(\Delta a)\psi(\gamma x)k
 -\gamma S(\gamma x)\nabla a\\
 &-2\psi(\gamma x)\bigl(k\,\diver\beta+(k\cdot\nabla)\beta\bigr).
 \end{split}
\end{equation}
Then
\begin{equation}\label{eq:lap-divE0-g}
 \Delta(aW(\gamma x))=\diver E_0+g.
\end{equation}
The periodic cell estimate gives
\begin{equation}\label{eq:g-visc-L1}
 \|g\|_{L^1(\R^3)}
 \le C\bigl(U,\|a\|_{C^2}\bigr)\bigl(\gamma\|S\|_{L^1(\T^3)}+\|W\|_{L^1(\T^3)}\bigr).
\end{equation}
No derivative of the periodic profile appears.

It remains to verify the compatibility conditions for $g$.  Put $V=aW(\gamma x)$ and define the compactly supported symmetric tensor
\begin{equation}\label{eq:A-lap}
 A[V]=\nabla V+(\nabla V)^T-(\diver V)I.
\end{equation}
A direct calculation gives $\diver A[V]=\Delta V$.  Since $E_0$ is also symmetric, \eqref{eq:lap-divE0-g} shows
\[
 g=\diver(A[V]-E_0).
\]
Thus \cref{lem:auto-compat} applies.  Set
\begin{equation}\label{eq:E-final}
 \calE_\gamma[a,W,S]=E_0+\Sloc g.
\end{equation}
Then \eqref{eq:viscosity-exact} follows from \eqref{eq:local-right-inverse}.  The estimates for $E_0$, \eqref{eq:g-visc-L1}, and \eqref{eq:local-L1} give \eqref{eq:viscosity-bound}.  Finally use \cref{eq:mikado-small-norms} to obtain \eqref{eq:viscosity-log}.
\end{proof}

The divergence corrector $z$ in \eqref{eq:w-v-z} has a simpler viscous stress.

\begin{lemma}[Corrector viscosity]\label{lem:corrector-viscosity}
Let
\begin{equation}\label{eq:Ez}
 E_z=\nabla z+(\nabla z)^T-(\diver z)I.
\end{equation}
Then $E_z\in C_c^\infty(U;\Sym_3)$,
\begin{equation}\label{eq:Ez-div}
 \diver E_z=\Delta z,
\end{equation}
and
\begin{equation}\label{eq:Ez-bound}
 \|E_z\|_{L^1(\R^3)}\le C\bigl(U,\Lambda,\AmpNorm{2}{a}\bigr)L^{-1/2}.
\end{equation}
\end{lemma}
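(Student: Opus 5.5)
The lemma has three parts: support and symmetry, the divergence identity, and the $L^1$ bound. I will take them in that order.

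\emph{Support and symmetry.} By \eqref{eq:w-v-z}, $z=\sum_k\gamma^{-1}(\partial_ja_k)(\Omega_{k,L})_{ij}(\gamma x)$. Each $a_k\in C_c^\infty(U)$ and the profiles are smooth, so $z\in C_c^\infty(U;\R^3)$. It follows that $E_z$ is smooth, compactly supported in $U$, and symmetric by construction.

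\emph{The divergence identity.} This is the same computation already used for $A[V]$ in the proof of \cref{lem:modulated-viscosity}:
\[
 \partial_j\bigl(\partial_jz_i+\partial_iz_j-(\diver z)\delta_{ij}\bigr)
 =\Delta z_i+\partial_i\diver z-\partial_i\diver z
 =\Delta z_i .
\]
This gives \eqref{eq:Ez-div}. I do not use that $\diver z=-\diver v$ vanishes or not; the trace term removes it regardless.

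\emph{The $L^1$ bound.} Pointwise $|E_z|\le C|\nabla z|$, so \eqref{eq:Ez-bound} reduces to \eqref{eq:z-W11} from \cref{lem:localized-velocity}. I recall why that estimate gives exactly $L^{-1/2}$ with no loss in $\gamma$. Differentiating $z$ produces two terms.
\begin{itemize}[label=--]
 \item The first is $\gamma^{-1}(\partial^2a_k)\Omega_{k,L}(\gamma x)$. By \cref{lem:periodic-cells} and \eqref{eq:mikado-small-norms} it is bounded in $L^1(\R^3)$ by $C\gamma^{-1}\AmpNorm{2}{a}L^{-1/2}$.
 \item The second is $(\partial a_k)(\nabla\Omega_{k,L})(\gamma x)$. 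The chain-rule factor $\gamma$ cancels the prefactor $\gamma^{-1}$, so only $\|\nabla\Omega_{k,L}\|_{L^1(\T^3)}\le C_\Lambda L^{-1/2}$ enters, again through the cell estimate \eqref{eq:cell-Lp} with $p=1$.
\end{itemize}
Summing over the finite set $\Lambda$ gives a constant $C(U,\Lambda,\AmpNorm{2}{a})$. This constant is independent of $\gamma$ and of higher derivatives of the profile.

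\emph{Main point to check.} The only delicate step is that the constant must not depend on $\nabla^2\Omega_{k,L}$, which is huge at the inner scale $e^{-L}$. Writing the stress as $\nabla z$-type quantities, rather than as a further anti-divergence of $\Delta z$, keeps just one derivative on $\Omega_{k,L}$. That single derivative is exactly the quantity controlled in \eqref{eq:mikado-small-norms} through $\|\nabla Q_L\|_{L^1}$.
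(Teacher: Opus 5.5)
Your proposal is correct and matches the paper's proof. The paper also obtains the divergence identity by direct expansion and reduces the $L^1$ bound to \eqref{eq:z-W11} via $|E_z|\le C|\nabla z|$. Your unpacking of \eqref{eq:z-W11} is accurate: only $\|\Omega_{k,L}\|_{L^1(\T^3)}$ and $\|\nabla\Omega_{k,L}\|_{L^1(\T^3)}$ enter, exactly as in the proof of \cref{lem:localized-velocity}.
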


\begin{proof}
The identity follows by expanding the divergence.  The estimate is a direct consequence of \cref{eq:z-W11}.
\end{proof}

Combining all directions, define
\begin{equation}\label{eq:Evis}
 E_{\mathrm{vis}}
 =\sum_{k\in\Lambda}\calE_\gamma[a_k,W_{k,L},S_{k,L}]+E_z.
\end{equation}
Then
\begin{equation}\label{eq:Evis-final}
 \diver E_{\mathrm{vis}}=\Delta w,
 \qquad
 \|E_{\mathrm{vis}}\|_{L^1(\R^3)}
 \le C\bigl(U,\Lambda,\AmpNorm{2}{a}\bigr)\frac\gamma{\sqrt L}.
\end{equation}

\section{A compactly supported oscillation stress}\label{sec:oscillation}

The oscillation error has the form
\[
 \diver\bigl(b(x)T(\gamma x)\bigr),
\]
where $b\in C_c^\infty(U)$ and $T\in C^\infty(\T^3;\Sym_3)$ satisfies
\begin{equation}\label{eq:T-assumptions}
 \int_{\T^3}T=0,
 \qquad \diver_yT=0.
\end{equation}
The next construction gains one power of $\gamma$ without any derivative of $T$.

\begin{lemma}[Localized oscillation anti-divergence]\label{lem:oscillation}
Under \eqref{eq:T-assumptions}, there exists
\[
 \calO_\gamma[b,T]\in C_c^\infty(U;\Sym_3)
\]
such that
\begin{equation}\label{eq:osc-div}
 \diver\calO_\gamma[b,T]
 =\diver\bigl(b(x)T(\gamma x)\bigr)
\end{equation}
and
\begin{equation}\label{eq:osc-bound}
 \|\calO_\gamma[b,T]\|_{L^1(\R^3)}
 \le C\bigl(U,\|b\|_{C^2}\bigr)\gamma^{-1}\|T\|_{L^1(\T^3)}.
\end{equation}
The constant is independent of derivative norms of $T$; the periodic operator $\Rop$ is applied separately to each row of $T$.
\end{lemma}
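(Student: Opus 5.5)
We first construct an explicit periodic anti-divergence of $T$ and then localize it. The mean-zero, divergence-free structure of $T$ lets us gain a power of $\gamma$. The leftover error is absorbed by the local symmetric solver $\Sloc$ of \cref{lem:local-sym-div}.

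\textbf{Step 1: the leading term.} Expanding,
\[
 \diver\bigl(b(x)T(\gamma x)\bigr)=T(\gamma x)\nabla b(x),
\]
since $\diver_yT=0$. This is a vector field whose rows are of the form $T_{ij}(\gamma x)\partial_jb(x)$. The periodic vector fields $y\mapsto T_{i\cdot}(y)$ have mean zero, and each $T_{ij}$ has mean zero. We introduce the periodic vector field $\Theta^{(j)}$ with components $(\Theta^{(j)})_i:=T_{ij}$, and set $\mathcal R^{(j)}:=\Rop\Theta^{(j)}\in C^\infty(\T^3;\Szero)$. By \eqref{eq:Rtorus-div} we have $\diver_y\mathcal R^{(j)}=\Theta^{(j)}$. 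By \eqref{eq:Rtorus-L1} we have $\|\mathcal R^{(j)}\|_{L^1(\T^3)}\le C\|T\|_{L^1(\T^3)}$, with no derivatives of $T$ involved. This is the row-by-row application of $\Rop$ mentioned in the statement.

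\textbf{Step 2: the principal stress.} Define
\[
 O_0(x)=\gamma^{-1}\sum_j\partial_jb(x)\,\mathcal R^{(j)}(\gamma x),
\]
which is symmetric and compactly supported in $\supp b\Subset U$. We have
\[
 \diver O_0=\sum_j\partial_jb\,\Theta^{(j)}(\gamma x)+\gamma^{-1}\sum_j\mathcal R^{(j)}(\gamma x)\nabla\partial_jb .
\]
The first sum is exactly $T(\gamma x)\nabla b$. The remainder, which we call $g$, satisfies
\[
 \|g\|_{L^1(\R^3)}\le C(U,\|b\|_{C^2})\gamma^{-1}\|T\|_{L^1(\T^3)}
\]
by \cref{lem:periodic-cells}. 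The same lemma gives the bound
\[
 \|O_0\|_{L^1(\R^3)}\le C(U,\|b\|_{C^1})\gamma^{-1}\|T\|_{L^1(\T^3)} .
\]

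\textbf{Step 3: compatibility and correction.} The remainder is $g=\diver\bigl(bT(\gamma\cdot)-O_0\bigr)$. This is the divergence of a smooth symmetric tensor compactly supported in $U$, because $T$ is symmetric. Hence \cref{lem:auto-compat} shows that $g$ satisfies the force and torque conditions \eqref{eq:compatibility}. We set
\[
 \calO_\gamma[b,T]=O_0-\Sloc g .
\]
Then $\diver\calO_\gamma[b,T]=T(\gamma x)\nabla b=\diver\bigl(bT(\gamma\cdot)\bigr)$. The bound \eqref{eq:local-L1} gives \eqref{eq:osc-bound}, and the support statement of \cref{lem:local-sym-div} keeps everything inside $U$.

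\textbf{Main obstacle.} The only delicate point is ensuring that no derivative norm of $T$ enters; in our application $\nabla T$ is enormous because of the $e^{-L}$ core. This is why we apply $\Rop$ directly to $T$, and rely only on its $L^1$ kernel bound and on the cell estimate \eqref{eq:cell-Lp}. We do not use any Schauder-type or derivative-based estimate.

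Two further points need checking but are routine. Trace-freeness is not required, since the output only needs to be symmetric; $O_0$ is in fact trace-free, and the trace of $\Sloc g$ is absorbed into the pressure as elsewhere in the paper. Smoothness of $\mathcal R^{(j)}$ follows from the smoothness of $T$.
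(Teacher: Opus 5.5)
Your proposal is correct and follows the paper's proof essentially step for step: you apply $\Rop$ row-wise to $T$, take the principal stress $\gamma^{-1}\partial_\ell b\,\Rop(T_{\ell\cdot})(\gamma x)$, check compatibility of the $O(\gamma^{-1})$ remainder via \cref{lem:auto-compat}, and subtract $\Sloc$ of that remainder. One cosmetic slip is that the remainder is $g=\diver\bigl(O_0-bT(\gamma\cdot)\bigr)$ rather than its negative, which is harmless because the compatibility conditions are linear.
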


\begin{proof}
For each $\ell\in\{1,2,3\}$, let $T_{\ell\cdot}$ denote the $\ell$th row of $T$ and define
\[
 Q^\ell=\Rop(T_{\ell\cdot}).
\]
Then $Q^\ell$ is symmetric and trace-free,
\begin{equation}\label{eq:Qell-div}
 \partial_{y_j}Q^\ell_{ij}=T_{\ell i}=T_{i\ell},
\end{equation}
and, by \eqref{eq:Rtorus-L1},
\begin{equation}\label{eq:Qell-L1}
 \|Q^\ell\|_{L^1(\T^3)}\le C\|T\|_{L^1(\T^3)}.
\end{equation}
Set
\begin{equation}\label{eq:B0-osc}
 (B_0)_{ij}(x)
 =\gamma^{-1}(\partial_\ell b)(x)Q^\ell_{ij}(\gamma x).
\end{equation}
Using \eqref{eq:Qell-div}, symmetry of $T$, and $\diver_yT=0$, we obtain
\begin{align}
 (\diver B_0)_i
 &=\gamma^{-1}(\partial_{j\ell}b)Q^\ell_{ij}(\gamma x)
 +(\partial_\ell b)T_{i\ell}(\gamma x)\notag\\
 &=r_i+\bigl(\diver(bT(\gamma x))\bigr)_i,
 \label{eq:B0-div}
\end{align}
where
\begin{equation}\label{eq:r-osc}
 r_i=\gamma^{-1}(\partial_{j\ell}b)Q^\ell_{ij}(\gamma x).
\end{equation}
The periodic cell estimate and \eqref{eq:Qell-L1} give
\begin{equation}\label{eq:B0-r-bound}
 \|B_0\|_{L^1(\R^3)}+\|r\|_{L^1(\R^3)}
 \le C\bigl(U,\|b\|_{C^2}\bigr)\gamma^{-1}\|T\|_{L^1(\T^3)}.
\end{equation}
Moreover,
\[
 r=\diver\bigl(B_0-bT(\gamma x)\bigr),
\]
and the tensor in parentheses is compactly supported and symmetric.  Hence $r$ satisfies the force and torque conditions by \cref{lem:auto-compat}.  Define
\begin{equation}\label{eq:O-def}
 \calO_\gamma[b,T]=B_0-\Sloc r.
\end{equation}
Then \eqref{eq:osc-div} follows from \eqref{eq:B0-div}, and \eqref{eq:osc-bound} follows from \eqref{eq:B0-r-bound} and \eqref{eq:local-L1}.
\end{proof}

\section{The compactly supported perturbation proposition}\label{sec:perturbation}

We now combine the preceding ingredients.  The strengthened form below records, in addition to the $L^2$ increment and the new Reynolds stress, independent $L^1$, $H^{-1}$, pressure, and finitely many pairing controls. 

\begin{proposition}[Endpoint perturbation in $\R^3$]\label{prop:perturbation}
Let $(u,p,R)$ be a smooth solution of \eqref{eq:NSR} with
\[
 u\in C_{c,\sigma}^\infty(U;\R^3),
 \qquad p\in C_c^\infty(U),
 \qquad R\in C_c^\infty(U;\Szero),
\]
where $U\subset\R^3$ is a ball.  Fix positive numbers
\[
 \delta,\quad \eta_2,\quad \eta_1,\quad \eta_{-1},\quad \zeta,
\]
and finitely many test fields $\varphi_1,\dots,\varphi_N\in C_c^\infty(\R^3;\R^3)$ with tolerances $\kappa_1,\dots,\kappa_N>0$.  Then there is another smooth compactly supported solution $(\overline u,\overline p,\overline R)$ of \eqref{eq:NSR} in $U$ such that, writing $w=\overline u-u$,
\begin{align}
 \|\overline R\|_{L^1(\R^3)}&<\delta,\label{eq:prop-new-stress}\\
 \|w\|_{L^2(\R^3)}&\le \sqrt{12}\,\|R\|_{L^1(\R^3)}^{1/2}+\eta_2,\label{eq:prop-L2}\\
 \|w\|_{L^1(\R^3)}&<\eta_1,\label{eq:prop-L1}\\
 \|w\|_{H^{-1}(\R^3)}&<\eta_{-1},\label{eq:prop-Hminus1}\\
 \left|\int_{\R^3}w\cdot\varphi_j\,\dd x\right|&<\kappa_j
 \qquad(1\le j\le N),\label{eq:prop-pairing}\\
 \|\overline p-p\|_{L^1(\R^3)}&\le 4\|R\|_{L^1(\R^3)}+\zeta.\label{eq:prop-pressure}
\end{align}
All constants implicit in the proof are independent of the current parameters $\gamma$ and $L$.
\end{proposition}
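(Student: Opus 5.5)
We follow the standard Mikado step, with amplitudes built from the normalized stress. Set $\rho_0:=\|R\|_{L^1}$. If $R=0$ we still perturb, but with amplitudes of size $\zeta$-small so that the bounds hold trivially. Fix a cutoff $\chi\in C_c^\infty(U)$, $0\le\chi\le1$, equal to $1$ on $\supp R$. Choose a smooth positive function $\rho(x)$ with $\rho\ge 2|R|/ (1/2)\cdot$ (more precisely $\rho\ge 4|R|$ on $\supp\chi$), so that $I-R/\rho\in B_{1/2}(I)$. Take for instance $\rho=\sqrt{\epsilon_0^2+16|R|^2}$ mollified, with a small $\epsilon_0$ chosen so that $\int\chi^2\rho\le 4\rho_0+\epsilon_0C_U$.

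Define
\[
a_k=\chi\rho^{1/2}\,\Gamma_k\bigl(I-R/\rho\bigr)\in C_c^\infty(U).
\]
By \cref{lem:geometric}, $\sum_k a_k^2k\otimes k=\chi^2(\rho I-R)$, and $\rho I-R=\chi^2(\rho I-R)$ wherever $R\ne0$. Hence
\[
\sum_k a_k^2\,k\otimes k + R=\chi^2\rho I+(1-\chi^2)R=\chi^2\rho I,
\]
which is pure pressure. Taking the trace gives $\sum|k|^2a_k^2=3\chi^2\rho$, so $\sum_k|k|^2\|a_k\|_2^2=3\int\chi^2\rho\le 12\rho_0+O(\epsilon_0)$. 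Inserting this into \eqref{eq:w-L2} yields \eqref{eq:prop-L2} once $\gamma$ is large (given $\epsilon_0$ and $a$).

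Next set $\overline u=u+w$ with $w$ as in \eqref{eq:w-def}. The new stress is then
\[
\overline R\sim -E_{\rm vis}+\sum_k\calO_\gamma[a_k^2,T_{k,L}]+(\text{linear and corrector terms}),
\]
with $\overline p=p-\chi^2\rho+\tfrac13(\text{traces})$. In detail:
\[
\diver(v\otimes v)=\diver\Bigl(\sum a_k^2k\otimes k\Bigr)+\diver\Bigl(\sum a_k^2T_{k,L}(\gamma x)\Bigr),
\]
using the disjoint supports, and the second piece is handled by \cref{lem:oscillation}, giving $O(\gamma^{-1})$ since $\|T\|_{L^1}\le C$. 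The remaining terms are treated as follows.
\begin{itemize}[label=\textbullet]
\item The Laplacian term $-\Delta w$ uses $E_{\rm vis}$ from \eqref{eq:Evis-final}, at cost $C\gamma/\sqrt L$.
\item The linear terms $u\otimes w+w\otimes u$ are symmetric, with $L^1$ norm bounded by $\|u\|_\infty\|w\|_1\le C L^{-1/2}$ by \eqref{eq:v-L1},\eqref{eq:z-L12}.
\item The corrector terms $v\otimes z+z\otimes v+z\otimes z$ have $L^1$ norm $\le \|w\|_2\|z\|_2+\|z\|_2^2\lesssim(\gamma\sqrt L)^{-1}$.
\end{itemize}
All these stresses are compactly supported in $U$ and symmetric; we move their traces into $\overline p$ and keep $\overline R$ trace-free via $\dev$ and \eqref{eq:dev-contraction}. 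Altogether,
\[
\|\overline R\|_1\le C_a(\gamma^{-1}+\gamma L^{-1/2}+L^{-1/2}).
\]
We first take $\gamma$ large and then $L\gg\gamma^2$, which gives \eqref{eq:prop-new-stress}.

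For the side bounds, \eqref{eq:prop-L1} follows from \eqref{eq:v-L1},\eqref{eq:z-L12} with $L$ large, and \eqref{eq:prop-pairing} from \eqref{eq:test-protection}. For $H^{-1}$, pair $w$ with $\varphi\in H^1$: integrating by parts as in the proof of \eqref{eq:test-protection} gives $\gamma^{-1}\|a\Omega(\gamma\cdot)\|_{L^2}\|\nabla\varphi\|_2\lesssim\gamma^{-1}L^{-1/2}$ by \eqref{eq:mikado-small-norms} and the $L^2$ cell estimate. For the pressure, the increment is $-\chi^2\rho$ plus traces of stresses that are small in $L^1$. Since $\int\chi^2\rho\le4\rho_0+C\epsilon_0$, \eqref{eq:prop-pressure} follows for $\epsilon_0$ small.

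The main obstacle I anticipate is ensuring that every constant depends only on $\AmpNorm{2}{a}$, which is fixed before $\gamma$ and $L$ are chosen, and never on derivatives of the profiles. \Cref{lem:modulated-viscosity,lem:oscillation} are designed for exactly this. A second point of care is the precise choice of $\rho$ so that its $C^2$ norm is finite while its integral stays within $4\rho_0+\zeta$.
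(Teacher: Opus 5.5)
Your proposal is correct and follows the paper's proof step for step: the geometric-lemma amplitudes (whose trace identity gives the $\sqrt{12}\,\|R\|_{L^1}^{1/2}$ bound), the antisymmetric localization $w=v+z$, the stresses $E_{\mathrm{vis}}$ and $\calO_\gamma[a_k^2,T_{k,L}]$, $\overline R=\dev A$ with the trace moved into the pressure, and the parameter order (regularization parameter, then $\gamma$, then $L\gg\gamma^2$). The only difference is cosmetic: you take $a_k=\chi\rho^{1/2}\Gamma_k(I-R/\rho)$ with $\rho=\sqrt{\epsilon_0^2+16|R|^2}$, so the pressure correction is $\chi^2\rho$, whereas the paper builds the cutoff into $\rho=4(\sigma^2\theta^4+|R|^2)^{1/2}$; your $\rho$ is already smooth, so the mollification you mention should be dropped, since it is unnecessary and could break $\rho\ge4|R|$.
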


\begin{proof}
\medskip\noindent\textbf{Step 1: compact amplitude decomposition.}
Choose $\theta\in C_c^\infty(U)$ such that $0\le\theta\le1$ and $\theta=1$ on a neighborhood of $\supp R$.  The transition region of $\theta$ can be chosen disjoint from $\supp R$.  Let $\sigma>0$ be a small auxiliary parameter and define
\begin{equation}\label{eq:rho-def}
 \rho=4\bigl(\sigma^2\theta^4+|R|^2\bigr)^{1/2}.
\end{equation}
On the region where $R$ may be nonzero, $\theta=1$ and $\rho>0$; on the transition region, $R=0$ and
\[
 \rho^{1/2}=2\sigma^{1/2}\theta.
\]
Thus $\rho^{1/2}\in C_c^\infty(U)$.  Define
\[
 M=I-\rho^{-1}R
\]
where $\rho>0$, and set $M=I$ where $\rho=0$.  This extension is smooth, and
\[
 |M-I|\le\frac14.
\]
By \cref{lem:geometric}, set
\begin{equation}\label{eq:a-def}
 a_k=\rho^{1/2}\Gamma_k(M).
\end{equation}
Then $a_k\in C_c^\infty(U)$ and
\begin{equation}\label{eq:stress-decomp}
 \rho I-R=\sum_{k\in\Lambda}a_k^2k\otimes k.
\end{equation}
Taking the trace in \eqref{eq:geometric} and using $\operatorname{tr}R=0$ gives
\begin{equation}\label{eq:amplitude-trace}
 \sum_{k\in\Lambda}|k|^2a_k^2=3\rho.
\end{equation}
Since
\[
 \rho\le4\sigma\theta^2+4|R|,
\]
we obtain
\begin{equation}\label{eq:amplitude-energy}
 \sum_{k\in\Lambda}|k|^2\|a_k\|_{L^2(\R^3)}^2
 \le12\|R\|_{L^1(\R^3)}+12\sigma\|\theta\|_{L^2(\R^3)}^2.
\end{equation}

\medskip\noindent\textbf{Step 2: velocity perturbation and low-norm controls.}
Choose an integer $\gamma\ge1$ and $L\ge L_0$, and define $v,w,z$ by \eqref{eq:v-def}, \eqref{eq:w-def}, and \eqref{eq:w-v-z}.  By disjointness of the profiles,
\begin{equation}\label{eq:v-tensor}
 v\otimes v
 =\sum_{k\in\Lambda}a_k^2W_{k,L}(\gamma x)\otimes W_{k,L}(\gamma x).
\end{equation}
Therefore, with $T_{k,L}$ from \eqref{eq:T-def},
\begin{equation}\label{eq:main-cancellation}
 R+v\otimes v
 =\rho I+\sum_{k\in\Lambda}a_k^2T_{k,L}(\gamma x).
\end{equation}
The velocity estimate \eqref{eq:w-L2} and \eqref{eq:amplitude-energy} give
\begin{equation}\label{eq:w-L2-current}
 \begin{split}
 \|w\|_{L^2(\R^3)}
 \le{}&\sqrt{12}\,\|R\|_{L^1(\R^3)}^{1/2}
 +\sqrt{12}\,\|\theta\|_{L^2(\R^3)}\sigma^{1/2}\\
 &+C_{R,\sigma,\theta,U}\gamma^{-1/2}
 +C_{R,\sigma,\theta,U}(\gamma\sqrt L)^{-1}.
 \end{split}
\end{equation}
Moreover,
\begin{equation}\label{eq:w-L1-current}
 \|w\|_{L^1(\R^3)}\le C_{R,\sigma,\theta,U}L^{-1/2},
\end{equation}
and \eqref{eq:test-protection} gives, for each $j$,
\begin{equation}\label{eq:pair-current}
 \left|\int_{\R^3}w\cdot\varphi_j\,\dd x\right|
 \le C_{R,\sigma,\theta,U,\varphi_j}(\gamma\sqrt L)^{-1}.
\end{equation}

The $H^{-1}$ estimate is implicit in the antisymmetric localization.  Put
\[
 B_{ij}(x)=\sum_{k\in\Lambda}a_k(x)(\Omega_{k,L})_{ij}(\gamma x).
\]
Then \eqref{eq:w-def} reads $w_i=\gamma^{-1}\partial_jB_{ij}$.  The periodic cell estimate and \eqref{eq:mikado-small-norms} imply
\[
 \|B\|_{L^2(\R^3)}\le C_{R,\sigma,\theta,U}L^{-1/2}.
\]
Consequently, for every $\phi\in H^1(\R^3;\R^3)$,
\[
 \left|\int_{\R^3}w\cdot\phi\,\dd x\right|
 \le\gamma^{-1}\|B\|_2\|\nabla\phi\|_2
 \le C_{R,\sigma,\theta,U}(\gamma\sqrt L)^{-1}\|\phi\|_{H^1},
\]
and hence
\begin{equation}\label{eq:w-Hminus1-current}
 \|w\|_{H^{-1}(\R^3)}
 \le C_{R,\sigma,\theta,U}(\gamma\sqrt L)^{-1}.
\end{equation}

\medskip\noindent\textbf{Step 3: viscosity and oscillation stresses.}
Let $E_{\mathrm{vis}}$ be defined by \eqref{eq:Evis}.  Then
\begin{equation}\label{eq:vis-current}
 \diver E_{\mathrm{vis}}=\Delta w,
 \qquad
 \|E_{\mathrm{vis}}\|_{L^1(\R^3)}
 \le C_{R,\sigma,\theta,U}\frac\gamma{\sqrt L}.
\end{equation}
For every $k$, apply \cref{lem:oscillation} with $b=a_k^2$ and $T=T_{k,L}$, and set
\begin{equation}\label{eq:Rosc-sum}
 R_{\mathrm{osc}}
 =\sum_{k\in\Lambda}\calO_\gamma[a_k^2,T_{k,L}].
\end{equation}
By \cref{eq:T-properties},
\begin{equation}\label{eq:Rosc-properties}
 \diver R_{\mathrm{osc}}
 =\diver\sum_k a_k^2T_{k,L}(\gamma x),
 \qquad
 \|R_{\mathrm{osc}}\|_{L^1(\R^3)}
 \le C_{R,\sigma,\theta,U}\gamma^{-1}.
\end{equation}

\medskip\noindent\textbf{Step 4: the new Reynolds equation.}
Define the symmetric compactly supported tensor
\begin{equation}\label{eq:A-new}
 \begin{split}
 A={}&-E_{\mathrm{vis}}+R_{\mathrm{osc}}
 +u\otimes w+w\otimes u\\
 &+w\otimes w-v\otimes v.
 \end{split}
\end{equation}
Set
\begin{equation}\label{eq:new-fields}
 \overline u=u+w,
 \qquad
 \overline R=\dev A,
 \qquad
 \overline p=p-\rho-\frac13\operatorname{tr}A.
\end{equation}
Using the old Reynolds equation, \eqref{eq:main-cancellation}, \eqref{eq:vis-current}, and \eqref{eq:Rosc-properties}, we obtain
\[
 -\Delta\overline u+\diver(\overline u\otimes\overline u)+\nabla(p-\rho)
 =\diver A.
\]
The pressure correction in \eqref{eq:new-fields} therefore gives
\begin{equation}\label{eq:new-NSR}
 -\Delta\overline u+\diver(\overline u\otimes\overline u)+\nabla\overline p
 =\diver\overline R.
\end{equation}
All fields are smooth and compactly supported in $U$, and $\diver\overline u=0$.

\medskip\noindent\textbf{Step 5: estimate of the full tensor $A$.}
The viscosity and oscillation terms satisfy the bounds above.  By \eqref{eq:w-L1-current},
\begin{equation}\label{eq:linear-error}
 \|u\otimes w+w\otimes u\|_1
 \le2\|u\|_\infty\|w\|_1
 \le C_{u,R,\sigma,\theta,U}L^{-1/2}.
\end{equation}
Since $w=v+z$,
\begin{align}
 \|w\otimes w-v\otimes v\|_1
 &\le2\|v\|_2\|z\|_2+\|z\|_2^2\notag\\
 &\le C_{R,\sigma,\theta,U}(\gamma\sqrt L)^{-1}.
 \label{eq:corrector-quadratic}
\end{align}
Enlarging the constant and using $\gamma\ge1$, we find
\begin{equation}\label{eq:A-estimate}
 \|A\|_{L^1(\R^3)}
 \le C_{u,R,\sigma,\theta,U}
 \left(\frac1\gamma+\frac\gamma{\sqrt L}\right).
\end{equation}
Since $\overline R=\dev A$, \eqref{eq:dev-contraction} gives $\|\overline R\|_1\le\|A\|_1$.

\medskip\noindent\textbf{Step 6: parameter choice and pressure estimate.}
First choose $\sigma>0$ so small that
\begin{equation}\label{eq:sigma-choice}
 \sqrt{12}\,\|\theta\|_{L^2}\sigma^{1/2}<\frac{\eta_2}{3},
 \qquad
 4\sigma\|\theta\|_{L^2}^2<\frac\zeta2.
\end{equation}
Once $\sigma$ is fixed, all amplitude $C^2$ norms are finite and depend only on the old data, $\theta$, and $\sigma$.  Choose the integer $\gamma$ sufficiently large that
\begin{equation}\label{eq:gamma-choice}
 C_{R,\sigma,\theta,U}\gamma^{-1/2}<\frac{\eta_2}{3},
 \qquad
 C_{u,R,\sigma,\theta,U}\gamma^{-1}
 <\min\left\{\frac\delta2,\frac{\sqrt3\,\zeta}{4}\right\}.
\end{equation}
Finally choose $L$ so large that
\begin{equation}\label{eq:L-choice}
 \begin{split}
 C_{R,\sigma,\theta,U}(\gamma\sqrt L)^{-1}&<\frac{\eta_2}{3},\\
 C_{R,\sigma,\theta,U}L^{-1/2}&<\eta_1,\\
 C_{R,\sigma,\theta,U}(\gamma\sqrt L)^{-1}&<\eta_{-1},\\
 C_{R,\sigma,\theta,U,\varphi_j}(\gamma\sqrt L)^{-1}&<\kappa_j
 \quad(1\le j\le N),\\
 C_{u,R,\sigma,\theta,U}\gamma L^{-1/2}
 &<\min\left\{\frac\delta2,\frac{\sqrt3\,\zeta}{4}\right\}.
 \end{split}
\end{equation}
The choices are compatible because $L$ has no upper bound.  The preceding estimates give \eqref{eq:prop-new-stress}--\eqref{eq:prop-pairing}.  Finally,
\[
 \|\rho\|_1\le4\sigma\|\theta\|_2^2+4\|R\|_1,
 \qquad
 |\operatorname{tr}A|\le\sqrt3\,|A|.
\]
Together with \eqref{eq:new-fields}, \eqref{eq:sigma-choice}, and \eqref{eq:A-estimate}, this yields
\[
 \|\overline p-p\|_1
 \le4\|R\|_1+4\sigma\|\theta\|_2^2+\frac1{\sqrt3}\|A\|_1
 \le4\|R\|_1+\zeta,
\]
which is \eqref{eq:prop-pressure}.
\end{proof}

\section{Iteration and proof of the main theorem}\label{sec:iteration}

Choose an open ball $U\Subset B$.  We run the whole iteration in this fixed inner ball.  Choose a nonzero field
\[
 U_0\in C_{c,\sigma}^\infty(U;\R^3)
\]
and, for a parameter $\tau>0$, set
\begin{equation}\label{eq:seed-u}
 u_0=\tau U_0.
\end{equation}
Define
\begin{equation}\label{eq:seed-A}
 A_0=-\nabla u_0-(\nabla u_0)^T+u_0\otimes u_0,
\end{equation}
and
\begin{equation}\label{eq:seed-R-p}
 R_0=\dev A_0,
 \qquad
 p_0=-\frac13\operatorname{tr}A_0=-\frac13|u_0|^2.
\end{equation}
Since $\diver u_0=0$,
\[
 \diver A_0=-\Delta u_0+\diver(u_0\otimes u_0),
\]
and hence $(u_0,p_0,R_0)$ solves \eqref{eq:NSR}.  Moreover,
\begin{equation}\label{eq:seed-stress-size}
 \|R_0\|_{L^1(\R^3)}\le C_{U_0}(\tau+\tau^2),
 \qquad
 \|p_0\|_{L^1(\R^3)}=\frac{\tau^2}{3}\|U_0\|_{L^2(\R^3)}^2.
\end{equation}

Fix $C_*\ge C_{U_0}$ large enough that
\[
 \|R_0\|_{L^1(\R^3)}\le C_*(\tau+\tau^2),
\]
and define
\begin{equation}\label{eq:delta-sequence}
 \delta_0=C_*(\tau+\tau^2),
 \qquad
 \delta_q=\delta_0 2^{-4q},
 \qquad q\ge0.
\end{equation}
We construct smooth compactly supported Reynolds solutions $(u_q,p_q,R_q)$ satisfying
\begin{equation}\label{eq:iteration-invariants}
 \supp u_q\cup\supp p_q\cup\supp R_q\subset U,
 \qquad
 \|R_q\|_{L^1(\R^3)}\le\delta_q.
\end{equation}
Suppose $(u_q,p_q,R_q)$ has been constructed.  Set
\begin{equation}\label{eq:iteration-targets}
 \eta_q=2^{-q-3}\delta_0^{1/2},
 \qquad
 \kappa_q=2^{-q-3}\tau\|U_0\|_{L^2(\R^3)}^2,
 \qquad
 \zeta_q=\tau 2^{-q-2}.
\end{equation}
Apply \cref{prop:perturbation} with
\[
 \delta=\delta_{q+1},
 \quad \eta_2=\eta_q,
 \quad \eta_1=1,
 \quad \eta_{-1}=1,
 \quad \zeta=\zeta_q,
 \quad N=1,
 \quad \varphi_1=U_0,
 \quad \kappa_1=\kappa_q.
\]
Writing $w_{q+1}=u_{q+1}-u_q$, we obtain
\begin{align}
 \|R_{q+1}\|_{L^1(\R^3)}&<\delta_{q+1},\label{eq:Rq-decay}\\
 \|w_{q+1}\|_{L^2(\R^3)}
 &\le \sqrt{12}\,\delta_q^{1/2}+\eta_q,\label{eq:wq-summable}\\
 \left|\int_{\R^3}w_{q+1}\cdot U_0\,\dd x\right|
 &<\kappa_q,\label{eq:pair-summable}\\
 \|p_{q+1}-p_q\|_{L^1(\R^3)}
 &\le4\delta_q+\zeta_q.\label{eq:pq-summable}
\end{align}
Since
\[
 \sum_{q=0}^\infty\delta_q^{1/2}<\infty,
 \qquad
 \sum_{q=0}^\infty\eta_q<\infty,
\]
there exists $u\in L^2(\R^3)$ such that
\begin{equation}\label{eq:strong-L2}
 u_q\longrightarrow u
 \quad\text{strongly in }L^2(\R^3).
\end{equation}
Likewise, \eqref{eq:pq-summable} gives a function $p\in L^1(\R^3)$ such that
\begin{equation}\label{eq:strong-pressure}
 p_q\longrightarrow p
 \quad\text{strongly in }L^1(\R^3).
\end{equation}
Every $u_q$ and $p_q$ vanishes outside the fixed inner ball $U$.  Hence
\begin{equation}\label{eq:limit-support}
 \esssupp u\cup\esssupp p\subset\overline U\subset B.
\end{equation}
The divergence-free condition passes to the limit.

Strong $L^2$ convergence gives
\begin{equation}\label{eq:quadratic-convergence}
 u_q\otimes u_q\longrightarrow u\otimes u
 \quad\text{strongly in }L^1(\R^3).
\end{equation}
For every $\phi\in C_c^\infty(\R^3;\R^3)$, the Reynolds equation gives
\begin{equation}\label{eq:full-weak-q}
 \int u_q\cdot(-\Delta\phi)
 -\int(u_q\otimes u_q):\nabla\phi
 -\int p_q\,\diver\phi
 =-\int R_q:\nabla\phi.
\end{equation}
The right-hand side tends to zero by \eqref{eq:Rq-decay}; the three terms on the left converge by \eqref{eq:strong-L2}, \eqref{eq:quadratic-convergence}, and \eqref{eq:strong-pressure}.  Thus $(u,p)$ solves \eqref{eq:NS} in distributions.

To prove nontriviality, sum \eqref{eq:pair-summable}:
\[
 \left|\int_{\R^3}(u-u_0)\cdot U_0\,\dd x\right|
 \le\sum_{q=0}^\infty\kappa_q
 =\frac14\tau\|U_0\|_{L^2(\R^3)}^2.
\]
Since
\[
 \int u_0\cdot U_0=\tau\|U_0\|_{L^2(\R^3)}^2,
\]
we obtain
\begin{equation}\label{eq:nonzero-pairing}
 \int u\cdot U_0
 \ge\frac34\tau\|U_0\|_{L^2(\R^3)}^2>0.
\end{equation}
Hence $u\ne0$.

Finally, for $0<\tau\le1$, \eqref{eq:wq-summable} and \eqref{eq:delta-sequence} give
\begin{equation}\label{eq:small-norm}
 \|u\|_{L^2(\R^3)}
 \le C_{U_0}(\tau+\tau^{1/2}).
\end{equation}
Moreover, by \eqref{eq:seed-stress-size}, \eqref{eq:pq-summable}, and the geometric series,
\begin{equation}\label{eq:small-pressure}
 \|p\|_{L^1(\R^3)}
 \le \|p_0\|_1+4\sum_{q=0}^\infty\delta_q+\sum_{q=0}^\infty\zeta_q
 \le C_{U_0}(\tau+\tau^2).
\end{equation}
Choosing $\tau$ sufficiently small gives
\[
 \|u\|_{L^2(\R^3)}+\|p\|_{L^1(\R^3)}<\eps.
\]
This completes the proof of \cref{thm:main}.

\section{Proof of the localized flexibility and rigidity theorem}\label{sec:flexibility}

\subsection{Strong \texorpdfstring{$L^p$}{Lp} density below the endpoint}

\begin{proof}[Proof of \cref{thm:flexibility-rigidity}\textup{(i)}]
Fix $1\le r<2$.  It suffices to treat a nonzero $v_0\in C_{c,\sigma}^\infty(B;\R^3)$.  Indeed, an arbitrary element of $\mathcal X_r(B)$ can first be approximated by such a field; if the first approximation is zero, add a nonzero compactly supported solenoidal field of arbitrarily small $L^r$ norm.  Choose a ball $U\Subset B$ containing $\supp v_0$.

Define
\[
 A_0=-\nabla v_0-(\nabla v_0)^T+v_0\otimes v_0,
 \qquad
 R_0=\dev A_0,
 \qquad
 p_0=-\frac13\operatorname{tr}A_0.
\]
Then $(v_0,p_0,R_0)$ solves \eqref{eq:NSR} in $U$.  Fix $\delta_0>\|R_0\|_1$ and set
\[
 \delta_q=\delta_0 2^{-4q},
 \qquad
 b_q=2^{-q-3},
 \qquad
 M_q=\sqrt{12}\,\delta_q^{1/2}+b_q.
\]
For $r=1$, choose positive numbers $\ell_q$ such that $\sum_q\ell_q<\eps/2$.  For $1<r<2$, let
\[
 \vartheta=\frac2r-1\in(0,1)
\]
and choose $\ell_q>0$ so small that
\begin{equation}\label{eq:Lp-interpolation-choice}
 \ell_q^\vartheta M_q^{1-\vartheta}<2^{-q-2}\eps.
\end{equation}
Choose also positive $\zeta_q$ with $\sum_q\zeta_q<\infty$, and set
\[
 \kappa_q=2^{-q-3}\|v_0\|_2^2.
\]

Starting from $(u_0,p_0,R_0)=(v_0,p_0,R_0)$, apply \cref{prop:perturbation} inductively with
\[
 \delta=\delta_{q+1},
 \quad \eta_2=b_q,
 \quad \eta_1=\ell_q,
 \quad \eta_{-1}=1,
 \quad \zeta=\zeta_q,
 \quad N=1,
 \quad \varphi_1=v_0,
 \quad \kappa_1=\kappa_q.
\]
Writing $w_{q+1}=u_{q+1}-u_q$, we obtain
\begin{align*}
 \|R_q\|_1&\le\delta_q,\\
 \|w_{q+1}\|_2&\le M_q,\\
 \|w_{q+1}\|_1&<\ell_q,\\
 \left|\int w_{q+1}\cdot v_0\right|&<\kappa_q,\\
 \|p_{q+1}-p_q\|_1&\le4\delta_q+\zeta_q.
\end{align*}
Since $\sum_qM_q<\infty$, the velocities converge strongly in $L^2$; the pressures converge strongly in $L^1$; and the usual limit passage gives a pair $(u,p)$ supported in $U$ and solving \eqref{eq:NS}.

If $r=1$, then $\|u-v_0\|_1<\eps/2$.  If $1<r<2$, interpolation gives
\[
 \|w_{q+1}\|_r
 \le\|w_{q+1}\|_1^\vartheta\|w_{q+1}\|_2^{1-\vartheta}
 <2^{-q-2}\eps
\]
by \eqref{eq:Lp-interpolation-choice}; hence $\|u-v_0\|_r<\eps/2$.  Finally,
\[
 \left|\int(u-v_0)\cdot v_0\right|
 \le\sum_{q=0}^\infty\kappa_q
 =\frac14\|v_0\|_2^2,
\]
so
\[
 \int u\cdot v_0\ge\frac34\|v_0\|_2^2>0.
\]
Thus $u\ne0$.  Combining this construction with the initial smooth approximation proves the theorem.
\end{proof}

\subsection{Negative Sobolev and weak endpoint density}

\begin{proof}[Proof of the strong $H^{-1}$ assertion in \cref{thm:flexibility-rigidity}\textup{(ii)}]
Repeat the proof of \cref{thm:flexibility-rigidity}\textup{(i)}, but at stage $q$ prescribe
\[
 \|w_{q+1}\|_{H^{-1}}<2^{-q-2}\eps
\]
through \cref{prop:perturbation}.  The $L^2$ increment bounds are unchanged, so the iteration still converges strongly in $L^2$, while the sum of the $H^{-1}$ increments is less than $\eps/2$.  The same pairing constraint preserves nontriviality, and an initial $H^{-1}$ approximation by a nonzero smooth compactly supported solenoidal field completes the proof.
\end{proof}

\begin{proof}[Proof of the weak $L^2$ assertion in \cref{thm:flexibility-rigidity}\textup{(ii)}]
Fix $v_0\in C_{c,\sigma}^\infty(B;\R^3)$.  Apply the proof of \cref{thm:flexibility-rigidity}\textup{(i)} with $r=1$ and with the total $L^1$ error tending to zero, while keeping the sequences $\delta_q$ and $b_q$ fixed.  This gives $u_n\in\mathscr S(B)$ such that
\[
 \|u_n-v_0\|_1\to0,
 \qquad
 \sup_n\|u_n\|_2<\infty.
\]
For any $\phi\in L^2(B)$, let $\phi_m$ be a bounded truncation converging to $\phi$ in $L^2$.  Then
\[
 \left|\int_B(u_n-v_0)\cdot\phi\right|
 \le\|u_n-v_0\|_1\|\phi_m\|_\infty
 +\|u_n-v_0\|_2\|\phi-\phi_m\|_2.
\]
First choose $m$ large and then $n$ large.  Thus $u_n\rightharpoonup v_0$ in $L^2$.  Hence every smooth compactly supported solenoidal field lies in the weak closure of $\mathscr S(B)$.  Since these smooth fields are strongly, and therefore weakly, dense in $\mathcal X_2(B)$, the result follows.
\end{proof}

The preceding flexibility range is sharp at the exponent $2$ in a natural and rigid sense.

\begin{lemma}[Compactly supported solenoidal extension of an affine field]\label{lem:affine-solenoidal-extension}
Let $K\Subset\R^3$ be compact and let $A\in\R^{3\times3}$ satisfy $\operatorname{tr}A=0$.  There exists $\Phi_A\in C_{c,\sigma}^\infty(\R^3;\R^3)$ such that
\[
 \Phi_A(x)=Ax
\]
on a neighborhood of $K$.
\end{lemma}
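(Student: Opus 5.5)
The plan is to write the affine field as the divergence of an antisymmetric matrix potential and then cut off that potential, exactly as in the antisymmetric localization of \cref{sec:localization}. Since $\operatorname{tr}A=0$, the field $V(x)=Ax$ is divergence-free and smooth on $\R^3$. We look for a smooth antisymmetric matrix field $\Psi_A$ with
\[
 \partial_j(\Psi_A)_{ij}=(Ax)_i .
\]
If $\chi\in C_c^\infty(\R^3)$ equals $1$ on a neighborhood of $K$, we then set
\[
 (\Phi_A)_i=\partial_j\bigl(\chi(\Psi_A)_{ij}\bigr).
\]
This field is compactly supported and smooth. It is divergence-free because $\partial_i\partial_j$ annihilates antisymmetric tensors, as in \eqref{eq:w-divfree}. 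Where $\chi=1$ it coincides with $Ax$.

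To construct $\Psi_A$, split $A$ into its symmetric trace-free part $A^s$ and its antisymmetric part $A^a$; by linearity they can be treated separately.

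\textbf{Antisymmetric part.} Write $A^ax=\omega\times x$. Since $\diver$ of the constant-coefficient field vanishes, $\omega\times x=\tfrac12\operatorname{curl}\bigl(\omega\times x\bigr)\times$ adjustments are not needed: instead use $\operatorname{curl}(\tfrac12|x|^2\omega)\cdot$. More directly, any divergence-free polynomial field $V$ is the curl of a polynomial vector potential $\alpha$. Taking $(\Psi)_{ij}=\epsilon_{ijk}\alpha_k$ up to sign gives $\partial_j\Psi_{ij}=(\operatorname{curl}\alpha)_i$. So it suffices to find a quadratic polynomial $\alpha$ with $\operatorname{curl}\alpha=Ax$.

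\textbf{Existence of $\alpha$.} A convenient general choice is the homotopy (Poincar\'e) formula for a homogeneous divergence-free field of degree one:
\[
 \alpha(x)=\tfrac13\,(Ax)\times x .
\]
The standard identity is $\operatorname{curl}(V\times x)=V(\diver x)-x\diver V+(x\cdot\nabla)V-(V\cdot\nabla)x=3V-0+V-V=3V$. This uses $\diver V=0$, $(x\cdot\nabla)V=V$ for linear $V$, and $(V\cdot\nabla)x=V$. Hence $\operatorname{curl}\alpha=Ax$, with no need to split $A$.

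So the construction is: set $\Psi_{ij}=\epsilon_{ijk}\alpha_k$ with $\alpha=\tfrac13(Ax)\times x$, checking the sign convention so that $\partial_j\Psi_{ij}=(\operatorname{curl}\alpha)_i$ (flipping the sign if necessary). Then localize with $\chi$ as above.

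The only step needing care is this curl identity and the index convention. Everything else — smoothness, compact support, divergence-freeness via antisymmetry, and agreement with $Ax$ on $\{\chi=1\}\supset$ a neighborhood of $K$ — is immediate.
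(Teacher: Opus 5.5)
Your proposal is correct and is essentially the paper's proof. With $V=Ax$ and $\alpha=\frac13 V\times x$, one has $\epsilon_{ijk}\alpha_k=\frac13(x_jV_i-x_iV_j)$, which is exactly the paper's antisymmetric potential $\Omega$; moreover $\partial_j(\epsilon_{ijk}\alpha_k)=(\operatorname{curl}\alpha)_i$ holds with no sign flip, and your cutoff $\partial_j(\chi\Psi_{ij})$ is the paper's $\Phi_A$. The garbled ``antisymmetric part'' paragraph should simply be deleted, since the homotopy formula already handles every trace-free $A$ without splitting.
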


\begin{proof}
Put $V(x)=Ax$.  Since $\diver V=\operatorname{tr}A=0$ and $V$ is homogeneous of degree one, the antisymmetric tensor
\[
 \Omega_{ij}(x)=\frac13\bigl(x_jV_i(x)-x_iV_j(x)\bigr)
\]
satisfies $\partial_j\Omega_{ij}=V_i$.  Indeed,
\[
 \partial_j(x_jV_i)=3V_i+x\cdot\nabla V_i=4V_i,
 \qquad
 \partial_j(x_iV_j)=V_i+x_i\diver V=V_i.
\]
Choose $\chi\in C_c^\infty(\R^3)$ equal to one near $K$ and define
\[
 (\Phi_A)_i=\partial_j(\chi\Omega_{ij}).
\]
Because $\chi\Omega$ is antisymmetric, $\diver\Phi_A=0$.  Where $\chi=1$, one has $\Phi_A=\diver\Omega=Ax$.
\end{proof}

\subsection{The endpoint quadratic obstruction}

\begin{proof}[Proof of \cref{thm:flexibility-rigidity}\textup{(iii)}]
Let $K=\esssupp u$ and let $A\in\Sym_3$ be trace-free.  By \cref{lem:affine-solenoidal-extension}, choose $\Phi_A\in C_{c,\sigma}^\infty$ equal to $Ax$ near $K$.  Testing \eqref{eq:weak-form} with $\Phi_A$ gives
\[
 0=-\left(\int_{\R^3}u\otimes u\,\dd x\right):A.
\]
Thus the symmetric matrix $M=\int u\otimes u$ is orthogonal to every trace-free symmetric matrix, so $M=cI$.  Taking the trace gives $3c=\|u\|_2^2$, proving \eqref{eq:isotropic-moment}.

If $u_n\to v$ strongly in $L^2$, then $u_n\otimes u_n\to v\otimes v$ strongly in $L^1$, so the moment constraint passes to the limit.  To see that the constraint is nontrivial, choose a nonzero $\phi\in C_c^\infty(B)$ and set
\[
 v=(\partial_2\phi,-\partial_1\phi,0).
\]
Then $v\in C_{c,\sigma}^\infty(B)$, but the $(3,3)$ entry of $\int v\otimes v$ is zero while its trace is positive.  Hence its moment is not isotropic.
\end{proof}

\begin{remark}[Sharp flexibility--rigidity transition]
The preceding results give a clean endpoint picture: strong density holds for every exponent below $2$, weak density holds at exponent $2$, and strong density fails there because of the exact quadratic invariant \eqref{eq:isotropic-moment}.
\end{remark}

\section{Affine identities, exact energy, and multiplicity}\label{sec:affine-energy}

\begin{lemma}[Zero mean]\label{lem:zero-mean}
If $u\in L^1(\R^3;\R^3)$ is compactly supported and $\diver u=0$ in distributions, then
\[
 \int_{\R^3}u\,\dd x=0.
\]
\end{lemma}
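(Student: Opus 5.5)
The plan is to test the distributional identity $\diver u=0$ against a cutoff of each coordinate function. Fix $i\in\{1,2,3\}$. Since $u$ is compactly supported, choose a ball $D$ with $\esssupp u\subset D$, and pick $\chi\in C_c^\infty(\R^3)$ with $\chi=1$ on a neighborhood of $\overline D$. Set $\varphi(x)=\chi(x)x_i\in C_c^\infty(\R^3)$. The distributional divergence-free condition gives
\[
 0=\langle\diver u,\varphi\rangle=-\int_{\R^3}u\cdot\nabla\varphi\,\dd x .
\]

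Next we evaluate $\nabla\varphi$ on the support of $u$. On a neighborhood of $\esssupp u$ we have $\chi=1$, so $\nabla\varphi=e_i$ there. The integrand $u\cdot\nabla\varphi$ vanishes almost everywhere off $\esssupp u$, and on it equals $u_i$. Since $u\in L^1$, every integral is finite, and we obtain $\int u_i\,\dd x=0$. Doing this for each $i$ gives the lemma.

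No step is a genuine obstacle. The one point to handle carefully is that $u$ is merely $L^1$ with essential support, not smooth. The argument only requires $\nabla\varphi=e_i$ almost everywhere on the set where $u\ne0$, and this holds because $\chi\equiv1$ on a neighborhood of $\esssupp u$, which is compact by assumption. The same test-function device, applied with $\varphi=\chi\,x_i e_j$ in the momentum equation, is presumably what yields \cref{thm:full-affine-identity} next.
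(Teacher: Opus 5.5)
Your proof is correct and is essentially the paper's argument. The paper writes $\diver(x_iu)=u_i+x_i\diver u=u_i$ in distributions and uses that a compactly supported integrable divergence has zero integral; unpacked, that is the same as testing $\diver u$ against a cutoff of $x_i$, which is what you do explicitly.
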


\begin{proof}
For each $i$,
\[
 \diver(x_i u)=u_i+x_i\diver u=u_i
\]
in distributions.  Since $x_i u$ is compactly supported and integrable, the integral of its divergence is zero.
\end{proof}

\subsection{The full affine stress identity}

\begin{proof}[Proof of \cref{thm:full-affine-identity}]
Let $A\in\R^{3\times3}$ be arbitrary.  Choose $\Phi\in C_c^\infty(\R^3;\R^3)$ equal to $Ax$ on a neighborhood of $\supp u\cup\supp p$.  Testing the full distributional equation gives
\[
 0=-\left(\int_{\R^3}(u\otimes u+pI)\,\dd x\right):A.
\]
Since $A$ is arbitrary, \eqref{eq:full-affine-identity} follows.  Taking the trace gives the pressure identity, and substituting it back gives the isotropic moment identity.  Finally, \eqref{eq:pressure-lower-bound} follows from $|\int p|\le\|p\|_1$.
\end{proof}

The stationary Navier--Stokes scaling is
\begin{equation}\label{eq:stationary-scaling}
 u_{\lambda,x_0}(x)=\lambda u(\lambda(x-x_0)),
 \qquad
 p_{\lambda,x_0}(x)=\lambda^2p(\lambda(x-x_0)).
\end{equation}
It satisfies
\begin{equation}\label{eq:scaling-norms}
 \|u_{\lambda,x_0}\|_2=\lambda^{-1/2}\|u\|_2,
 \qquad
 \|p_{\lambda,x_0}\|_1=\lambda^{-1}\|p\|_1,
 \qquad
 \operatorname{diam}\supp u_{\lambda,x_0}=\lambda^{-1}\operatorname{diam}\supp u.
\end{equation}

\subsection{Exact energy and multiplicity}

\begin{proof}[Proof of \cref{thm:exact-energy}]
Choose a nonempty ball $B\Subset\Omega$.  Fix one nonzero compactly supported solution $(u_*,p_*)$ given by \cref{thm:main}, translated so that
\[
 \supp u_*\cup\supp p_*\subset B(0,R_{\mathrm{supp}}),
 \qquad
 e_*:=\|u_*\|_2>0.
\]
Write $B=B(x_B,R)$.  For an integer $m\ge1$, set $N=m^3$ and
\[
 \lambda_N=\frac{Ne_*^2}{E^2}.
\]
For $m$ sufficiently large, $\lambda_N$ is large and the support radius $R_{\mathrm{supp}}/\lambda_N$ is $O(m^{-3})$.  Choose $N=m^3$ grid points $x_1,\dots,x_N$ in a fixed cube compactly contained in $B$, with mutual spacing comparable to $m^{-1}$.  For large $m$, the supports of
\[
 u_j=u_{\lambda_N,x_j},
 \qquad
 p_j=p_{\lambda_N,x_j}
\]
are pairwise disjoint and contained in $B$.  Set
\[
 u=\sum_{j=1}^N u_j,
 \qquad
 p=\sum_{j=1}^N p_j.
\]
Because the supports are disjoint,
\[
 u\otimes u=\sum_{j=1}^N u_j\otimes u_j,
\]
so the sum is again a stationary solution.  Moreover,
\[
 \|u\|_2^2
 =N\lambda_N^{-1}e_*^2=E^2,
\]
and
\[
 \|p\|_1
 =N\lambda_N^{-1}\|p_*\|_1
 =\frac{\|p_*\|_1}{e_*^2}E^2.
\]
This proves the upper bound in \eqref{eq:pressure-energy-bounds}, with $C_\Omega=\|p_*\|_1/e_*^2$, while the lower bound follows from \eqref{eq:pressure-lower-bound}.

To obtain uncountably many solutions, keep all but one center fixed and move the remaining center in a small open set that preserves disjointness.  Distinct sufficiently small translations of a nonzero compactly supported function are distinct; otherwise the function would be invariant under a nonzero translation and could not have compact support.
\end{proof}

\section{Periodization and evolutionary consequences}\label{sec:periodic-dynamic}

Write $\T^3=\R^3/\Z^3$ and let $Q=(-\tfrac12,\tfrac12)^3$.

\begin{proposition}[Periodization of a compactly supported solution]\label{prop:periodization}
Suppose $(u,p)$ is a stationary solution on $\R^3$ with
\[
 u\in L^2(\R^3;\R^3),
 \qquad
 p\in L^1(\R^3),
\]
and suppose that both supports are compactly contained in $Q$.  Define
\[
 u_{\rm per}(x)=\sum_{n\in\Z^3}u(x-n),
 \qquad
 p_{\rm per}(x)=\sum_{n\in\Z^3}p(x-n).
\]
Then $(u_{\rm per},p_{\rm per})$ descends to a stationary solution on $\T^3$.  Moreover,
\begin{equation}\label{eq:periodization-properties}
 \int_{\T^3}u_{\rm per}\,\dd x=0,
 \qquad
 \|u_{\rm per}\|_{L^2(\T^3)}=\|u\|_{L^2(\R^3)},
 \qquad
 \|p_{\rm per}\|_{L^1(\T^3)}=\|p\|_{L^1(\R^3)}.
\end{equation}
\end{proposition}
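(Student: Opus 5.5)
The plan is to exploit that the supports of the translates $u(\cdot-n)$, $n\in\Z^3$, are pairwise disjoint. Since $\esssupp u\cup\esssupp p\subset K$ for some compact $K\subset Q$, and the translates $K+n$ are pairwise disjoint, the sums defining $u_{\rm per}$ and $p_{\rm per}$ are locally finite. At almost every point at most one term is nonzero. Hence $u_{\rm per}$ and $p_{\rm per}$ are $\Z^3$-periodic and lie in $L^2_{\rm loc}$ and $L^1_{\rm loc}$ respectively. On the fundamental domain $Q$ they coincide with $u$ and $p$. This immediately gives the norm identities in \eqref{eq:periodization-properties}, since $Q$ has unit measure and the Haar measure is normalized. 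The zero-mean identity reduces to $\int_Q u=\int_{\R^3}u=0$, which is \cref{lem:zero-mean}.

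The same disjointness gives the pointwise identity
\[
 u_{\rm per}\otimes u_{\rm per}=\sum_n u(\cdot-n)\otimes u(\cdot-n),
\]
because cross terms vanish almost everywhere. So the nonlinearity also periodizes.

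To verify the equation on $\T^3$, take a smooth periodic test field $\Phi$ and regard it as a periodic function on $\R^3$. The distributional equation on the torus amounts to
\[
 \int_{\T^3}\Bigl(u_{\rm per}\cdot(-\Delta\Phi)-(u_{\rm per}\otimes u_{\rm per}):\nabla\Phi-p_{\rm per}\diver\Phi\Bigr)=0,
\]
together with $\int_{\T^3}u_{\rm per}\cdot\nabla\phi=0$ for periodic scalar $\phi$. I would unfold each integral over $Q$ into an integral over $\R^3$ of the nonperiodic $u$ and $p$; this uses that all integrands are supported in $K$ inside $Q$.

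Next, choose $\chi\in C_c^\infty(Q)$ with $\chi=1$ on a neighborhood of $K$. Replacing $\Phi$ by $\chi\Phi\in C_c^\infty(\R^3;\R^3)$ does not change any integrand, because $\chi\Phi$ and $\Phi$ agree together with their derivatives near $K$. The resulting expression is exactly the whole-space distributional identity tested against $\chi\Phi$, which vanishes by hypothesis. The divergence-free condition follows identically using $\chi\phi$.

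There is no real obstacle. The only point needing care is the justification that the whole-space equation holds against general (not divergence-free) compactly supported test fields with the pressure included. That is exactly the distributional formulation assumed in the statement, since $(u,p)$ solves \eqref{eq:NS} in $\calD'(\R^3)$ with $p\in L^1$. Nonconstancy for the later corollary follows from the zero mean together with $u\ne0$.
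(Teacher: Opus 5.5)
Your proof is correct and follows the paper's argument: disjointness of the translated supports, periodization of $u\otimes u$ with vanishing cross terms, integration over $Q$ for the norm identities, and \cref{lem:zero-mean} for the zero mean. The one difference is in checking the torus equation. You unfold a periodic test field onto a single cell using the cutoff $\chi$, while the paper sums the locally finite family of translated whole-space identities; these are the same argument seen from the test-function side.
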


\begin{proof}
The sums are locally finite.  Since the supports are compactly contained in one fundamental cube, distinct translates of the supports are disjoint.  Therefore
\[
 u_{\rm per}\otimes u_{\rm per}
 =\sum_{n\in\Z^3}(u\otimes u)(x-n).
\]
Each translated pair solves the stationary equations, and summing the translated distributional identities gives the periodic equation.  The norm identities follow by integration over $Q$.  Finally,
\[
 \int_{\T^3}u_{\rm per}\,\dd x
 =\int_{\R^3}u\,\dd x=0
\]
by \cref{lem:zero-mean}.
\end{proof}

\begin{proof}[Proof of \cref{cor:periodic-endpoint}]
Choose the corresponding whole-space solutions inside a ball compactly contained in $Q$, using \cref{thm:main,thm:flexibility-rigidity,thm:exact-energy}, and apply \cref{prop:periodization}.
\end{proof}

\subsection{Stationary-versus-Leray nonuniqueness}

\begin{proof}[Proof of \cref{thm:stationary-leray}]
Let $u_{\rm in}\ne0$ be any compactly supported stationary solution supplied by the preceding results.  The first solution is
\[
 U^{\rm st}(t,x)=u_{\rm in}(x).
\]
It is a global distributional solution and belongs to $C([0,\infty);L^2)$.  Classical Leray theory provides a global Leray--Hopf solution $U^{\rm L}$ with the same initial datum \cite{Leray1934}; see also \cite{BerselliSpirito2021,Temam2001} for modern treatments, with the latter covering both the whole space and the flat torus.  We use the standard representative described in \cite[Definition~1 and Remark~2]{BerselliSpirito2021}, namely $U^{\rm L}\in C_{\mathrm w}([0,\infty);L^2)$ with
\begin{equation}\label{eq:leray-energy}
 \|U^{\rm L}(t)\|_2^2
 +2\int_0^t\|\nabla U^{\rm L}(s)\|_2^2\,\dd s
 \le\|u_{\rm in}\|_2^2
\end{equation}
holds for every $t\ge0$.  If equality held in the first term at some $t>0$, then \eqref{eq:leray-energy} would force $\nabla U^{\rm L}=0$ for almost every $s\in(0,t)$.  On $\R^3$, an $L^2$ spatially constant field is zero, so $U^{\rm L}(s)=0$ for almost every $s\in(0,t)$.  Weak $L^2$ continuity would then give $U^{\rm L}(0)=0$, contradicting $u_{\rm in}\ne0$.  This proves \eqref{eq:strict-energy-drop}, and in particular the two solutions are distinct at every positive time.

On $\T^3$, periodize the stationary solution and take a periodic Leray--Hopf solution with the same initial datum.  The periodized field has zero mean by \cref{prop:periodization}, and the spatial mean of a periodic weak solution is conserved.  Hence $\nabla U^{\rm L}=0$ on a time interval would force $U^{\rm L}=0$ there; weak continuity again contradicts the nonzero initial datum.  The norm assertions follow from \cref{thm:main,thm:exact-energy,cor:periodic-endpoint}.
\end{proof}

\begin{proof}[Proof of the density assertion in \cref{thm:stationary-leray}]
Every nonzero stationary solution furnished by \cref{thm:flexibility-rigidity} is an initial datum to which \cref{thm:stationary-leray} applies.
\end{proof}

\begin{remark}[Solution class and comparison]
The preceding corollary is not a nonuniqueness result within the Leray--Hopf class: the stationary branch lies outside the energy class.  It is a nonuniqueness statement in the broader class $L_t^\infty L_x^2\cap C_{t,\mathrm w}L_x^2$, obtained from a stationary-versus-dissipative mechanism. 
\end{remark}

\appendix

\section{A self-contained flat symmetric-divergence construction}\label{app:local-kernel}

This appendix proves the precise Euclidean ball result used in \cref{lem:local-sym-div}.  The kernel formula agrees with the flat straight-line formula in \cite[Appendix~A.4.3]{IMOT2025}, which we cite for provenance; none of the mapping or support theorems from that reference is used below.

Fix a ball $U\subset\R^3$ and choose once and for all
\[
 \eta\in C_c^\infty(U),\qquad \int_{\R^3}\eta=1.
\]
All constants below may depend on this fixed choice and on $U$; we denote them by $C_U$.
For $z\ne0$, $y\in\R^3$, write
\[
 r=|z|,\qquad \omega=\frac z{|z|},
\]
and define
\begin{equation}\label{eq:appendix-aeta}
 \mathfrak a_\eta(z,y)
 =\int_r^\infty \eta(y+s\omega)s^2\,\dd s.
\end{equation}
Set
\begin{equation}\label{eq:appendix-Keta}
 \begin{split}
 (K_\eta)_{ij}{}^k(y+z,y)
 ={}&\frac12\mathfrak a_\eta(z,y)
 \frac{z_i\delta_j^k+z_j\delta_i^k}{r^3}\\
 &+\frac12\partial_{z_m}\!\left[
 \mathfrak a_\eta(z,y)
 \frac{z_m(z_i\delta_j^k+z_j\delta_i^k)}{r^3}
 \right]\\
 &-\partial_{z_k}\!\left[
 \mathfrak a_\eta(z,y)\frac{z_i z_j}{r^3}
 \right].
 \end{split}
\end{equation}
For $f\in C_c^\infty(U;\R^3)$ define
\begin{equation}\label{eq:appendix-Seta}
 (S_\eta f)_{ij}(x)
 =\int_{\R^3}(K_\eta)_{ij}{}^k(x,y)f_k(y)\,\dd y.
\end{equation}
The repeated indices in this appendix are summed over $1,2,3$.

\begin{proposition}[Flat local symmetric anti-divergence]\label{prop:flat-sym-div}
The tensor $S_\eta f$ is smooth, symmetric, and compactly supported, and
\begin{equation}\label{eq:appendix-divSeta}
 \diver S_\eta f=f-B_\eta f,
\end{equation}
where
\begin{equation}\label{eq:appendix-beta}
 (b_\eta)_i{}^k(x,y)
 =2\eta(x)\delta_i^k
 +\frac12(x-y)_\ell\partial_\ell\eta(x)\delta_i^k
 -\frac12(x-y)_i\partial_k\eta(x)
\end{equation}
and
\[
 (B_\eta f)_i(x)=\int_{\R^3}(b_\eta)_i{}^k(x,y)f_k(y)\,\dd y.
\]
Moreover,
\begin{equation}\label{eq:appendix-support}
 \supp S_\eta f
 \subset\bigcup_{y\in\supp f,\ y_1\in\supp\eta}[y,y_1]
 \subset\operatorname{co}(\supp f\cup\supp\eta)\Subset U,
\end{equation}
and, for every $y\in U$ and $x\ne y$,
\begin{equation}\label{eq:appendix-kernel-order}
 |K_\eta(x,y)|
 \le C_U|x-y|^{-2}
 \one_{\{|x-y|\le\operatorname{diam}U\}}.
\end{equation}
Consequently,
\begin{equation}\label{eq:appendix-L1}
 \|S_\eta f\|_{L^1(\R^3)}
 \le C_U\|f\|_{L^1(\R^3)}.
\end{equation}
Finally, if $f$ has zero total force and zero total torque, then $B_\eta f=0$.
\end{proposition}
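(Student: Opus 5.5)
The plan is to verify the proposition directly in the variables $z=x-y$ (with $y$ frozen), treating $S_\eta f(x)=\int K_\eta(x,y)f(y)\,\dd y$ as a superposition of kernels. Then every statement reduces to a property of the single function $z\mapsto K_\eta(y+z,y)$. The key structural facts about $\mathfrak a_\eta$ are two. First, it is smooth in $(z,y)$ for $z\ne0$. Second, it is a function of $(r,\omega,y)$ with
\[
 \partial_r\mathfrak a_\eta(z,y)=-\eta(y+z)r^2,
\]
while its angular derivatives are bounded by $C_U r^{-1}$. Both follow by differentiating under the integral, using that $\eta$ is smooth and compactly supported in $U$ and that the $s$-range effectively lies in $[r,\operatorname{diam}U]$ when $y\in U$.

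\textbf{Step 1 (support and kernel bound).} Since $\mathfrak a_\eta(z,y)\ne0$ forces $y+s\omega\in\supp\eta$ for some $s\ge r$, the point $x=y+r\omega$ lies on a segment $[y,y_1]$ with $y_1\in\supp\eta$. This gives \eqref{eq:appendix-support}, and $K_\eta$ vanishes when $|x-y|>\operatorname{diam}U$. For \eqref{eq:appendix-kernel-order}, I will use $|\mathfrak a_\eta|\le C_U$ and $|\nabla_z\mathfrak a_\eta|\le C_U(r^{-1}+r^2)$. The fractions $z_i\delta_j^k/r^3$ and $z_mz_i/r^3$ are homogeneous of degree $-2$ and $-1$ respectively, so each of the three terms in \eqref{eq:appendix-Keta} is $O(r^{-2})$ on $r\le\operatorname{diam}U$. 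Then \eqref{eq:appendix-L1} follows from Fubini/Young, since $\int_{|z|\le\operatorname{diam}U}|z|^{-2}\,\dd z<\infty$. Symmetry in $i,j$ is visible in the formula. Smoothness of $S_\eta f$ follows by rewriting the integral as $\int K_\eta(y+z,y)f(x-z)\,\dd z$: the singularity sits only in $z$, so $x$-derivatives fall on $f$.

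\textbf{Step 2 (divergence identity, the main obstacle).} I must show that, in distributions in $z$,
\[
 \partial_{z_j}\bigl[(K_\eta)_{ij}{}^k(y+z,y)\bigr]=\delta_i^k\delta_0(z)-(b_\eta)_i{}^k(y+z,y).
\]
Away from $z=0$ I will compute term by term using $\partial_j(z_j/r^3)=0$, $\partial_j(z_iz_j/r^3)=z_i/r^3$, and $z\cdot\nabla_z\mathfrak a_\eta=r\partial_r\mathfrak a_\eta=-\eta(x)r^3$. The third term contributes $-\partial_k$ of $\partial_j[\mathfrak a_\eta z_iz_j/r^3]=\mathfrak a_\eta z_i/r^3-\eta(x)z_i$. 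The second term, written as $\partial_m[\ldots]$, combines with the first after differentiation in $j$. The radial derivatives all collapse to expressions in $\eta(x)$ and $\nabla\eta(x)$ multiplied by components of $z=x-y$, which should reproduce \eqref{eq:appendix-beta}.

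The delicate part is the point mass at $z=0$. I will test against a smooth function, excise $B_\epsilon$, and compute the boundary fluxes. The flux of $\mathfrak a_\eta z_j/r^3$ through $\partial B_\epsilon$ tends to $\int_{S^2}\int_0^\infty\eta(y+s\omega)s^2\,\dd s\,\dd\omega=\int\eta=1$, and the other fluxes either vanish as $\epsilon\to0$ or cancel. Getting the coefficients to yield exactly $\delta_i^k\delta_0$ is the main computation. If it becomes unwieldy, I will instead check the identity by pairing with test functions of the form $\eta$-weighted straight-line Bogovskii representations. Integrating in $y$ against $f_k$ then gives \eqref{eq:appendix-divSeta}.

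\textbf{Step 3 (vanishing of $B_\eta f$).} The kernel $b_\eta$ is affine in $y$. Integrating, the $y$-independent part of the integrand multiplies $\int f$, which vanishes by zero total force. What remains is
\[
 (B_\eta f)_i(x)=\tfrac12\partial_\ell\eta(x)\int(y_if_\ell-y_\ell f_i)\,\dd y,
\]
which vanishes by the torque conditions in \eqref{eq:compatibility}.
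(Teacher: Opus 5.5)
Your outline follows the paper's proof in most places: the segment argument for the support, the bounds $|\mathfrak a_\eta|\le C_U$ and $|\nabla_z\mathfrak a_\eta|\le C_Ur^{-1}$ giving \eqref{eq:appendix-kernel-order} and \eqref{eq:appendix-L1}, the off-diagonal computation driven by $z\cdot\nabla_z\mathfrak a_\eta=-r^3\eta(y+z)$, and the moment computation for $B_\eta f$ (your formula is the paper's \eqref{eq:appendix-Btorque}). The real difference is how you identify the point mass, and there the proposal stops exactly where the argument stops being routine.

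Your claim that the fluxes other than that of $\mathfrak a_\eta\delta_i^kz_j/r^3$ ``vanish or cancel'' holds only in aggregate. The angular part of $\nabla_z\mathfrak a_\eta$ has size $r^{-1}$, so the term $\partial_{z_k}[\mathfrak a_\eta z_iz_j/r^3]$ in \eqref{eq:appendix-Keta} contributes an $O(1)$ flux. Indeed, its $-\delta_i^k\int A_0$ piece exactly cancels the $1$ you identify. Writing $A_0(\omega)=\int_0^\infty\eta(y+s\omega)s^2\,\dd s$, the total flux is
\[
 \lim_{\epsilon\to0}\int_{|z|=\epsilon}(K_\eta)_{ij}{}^k(y+z,y)\,\frac{z_j}{\epsilon}\,\dd S
 =\int_{S^2}\Bigl(3A_0\,\omega_i\omega_k-\omega_i\bigl(\nabla_{S^2}A_0\bigr)_k\Bigr)\dd\omega .
\]
This equals $\delta_i^k\int_{S^2}A_0\,\dd\omega=\delta_i^k\int_{\R^3}\eta=\delta_i^k$ only after the integration by parts on the sphere
\[
 \int_{S^2}\omega_i(\nabla_{S^2}A_0)_k\,\dd\omega=3\int_{S^2}A_0\omega_i\omega_k\,\dd\omega-\delta_i^k\int_{S^2}A_0\,\dd\omega .
\]
So your route closes, but this identity is the missing step.

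The paper avoids the sphere entirely. Once $\partial_{x_j}(K_\eta)_{ij}{}^k=-(b_\eta)_i{}^k$ off the diagonal, the distribution $\partial_{x_j}K_\eta+b_\eta$ is supported at $y$ and has order zero, by the same $\epsilon$-excision estimate you would use. Testing it against a cutoff equal to one near the compact $x$-supports kills the derivative term, so the coefficient is simply $\int(b_\eta)_i{}^k\,\dd x=(2-\frac32+\frac12)\delta_i^k=\delta_i^k$.

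That shortcut is only as good as the off-diagonal identity, which you also leave as ``should reproduce''. The mechanism is that the terms $\pm\partial_k(\mathfrak a_\eta z_i/r^3)$ arising from the first two pieces and from the third piece cancel exactly. Angular derivatives of $\mathfrak a_\eta$ therefore drop out in the bulk, and only $q=z\cdot\nabla_z\mathfrak a_\eta$ survives, in the form $-\frac12\partial_k(z_iq/r^3)+\frac12\delta_i^k(2q+z\cdot\nabla_zq)/r^3$. Inserting $q=-r^3\eta(y+z)$ gives $-b_\eta$.

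Your smoothness argument has a separate slip. After substituting $y=x-z$, the integrand is $K_\eta(x,x-z)f(x-z)$. The kernel depends on $x$ through its second slot, via $\eta(x-z+s\omega)$ inside $\mathfrak a_\eta$, so $x$-derivatives do not fall only on $f$. They also fall on $\eta$, producing kernels $K_{\partial^\alpha\eta}$ of the same form with the same $|z|^{-2}$ bound. This is the paper's translation identity \eqref{eq:appendix-translation-identity}, after which dominated convergence gives $S_\eta f\in C^\infty$. With these points supplied, your proof is complete.
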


\begin{proof}
The symmetry $(K_\eta)_{ij}{}^k=(K_\eta)_{ji}{}^k$ is immediate from \eqref{eq:appendix-Keta}.  We prove the remaining assertions in four steps.

\smallskip
\noindent\emph{Step 1: support and the order $-1$ bound.}
If $\mathfrak a_\eta(z,y)\ne0$, then for some $s\ge r$ the point
\[
 y_1=y+s\omega
\]
belongs to $\supp\eta$, and
\[
 x=y+z
 =\left(1-\frac rs\right)y+\frac rs y_1\in[y,y_1].
\]
Taking $z$-derivatives does not enlarge this support.  This proves the first inclusion in \eqref{eq:appendix-support}; the second follows from convexity of $U$.  Because both endpoint sets are compactly contained in $U$, their convex hull is also compactly contained in $U$.

For the kernel estimate, fix $y\in U$, which is the only range entering \eqref{eq:appendix-Seta} because $\supp f\subset U$.  Let $D=\operatorname{diam}U$.  On the kernel support, the point $y_1\in\supp\eta\subset U$ found above satisfies $r\le |y_1-y|\le D$.  From \eqref{eq:appendix-aeta},
\begin{equation}\label{eq:appendix-a-bounds}
 |\mathfrak a_\eta(z,y)|\le C_U,
 \qquad
 |\nabla_z\mathfrak a_\eta(z,y)|\le C_U r^{-1}.
\end{equation}
Indeed, the radial derivative is
\[
 \partial_r\mathfrak a_\eta=-r^2\eta(y+r\omega),
\]
whereas an angular derivative is bounded by
\[
 \int_r^D s^3|\nabla\eta(y+s\omega)|\,\dd s;
\]
the conversion from angular to Cartesian derivatives contributes one factor $r^{-1}$.  Substitution into \eqref{eq:appendix-Keta} gives \eqref{eq:appendix-kernel-order}.

\smallskip
\noindent\emph{Step 2: the divergence away from the diagonal.}
Let
\[
 E=z\cdot\nabla_z,
 \qquad q=E\mathfrak a_\eta.
\]
Since the tensor
\[
 V_{ij}{}^k(z)=\frac{z_i\delta_j^k+z_j\delta_i^k}{r^3}
\]
is homogeneous of degree $-2$, one has
\[
 \partial_{z_m}(z_mV_{ij}{}^k)=V_{ij}{}^k.
\]
Thus \eqref{eq:appendix-Keta} may be rewritten, for $z\ne0$, as
\begin{equation}\label{eq:appendix-K-simplified}
 (K_\eta)_{ij}{}^k
 =\left(\mathfrak a_\eta+\frac12q\right)V_{ij}{}^k
 -\partial_{z_k}\!\left(\mathfrak a_\eta\frac{z_i z_j}{r^3}\right).
\end{equation}
Using
\[
 \partial_{z_j}(z_jr^{-3})=0,
 \qquad
 \partial_{z_j}(z_i z_jr^{-3})=z_i r^{-3},
\]
a direct differentiation yields
\begin{equation}\label{eq:appendix-div-away}
 \partial_{z_j}(K_\eta)_{ij}{}^k
 =-\frac12\partial_{z_k}\!\left(\frac{z_iq}{r^3}\right)
 +\frac12\delta_i^k\frac{2q+Eq}{r^3}.
\end{equation}
Keeping $\omega$ fixed while differentiating the lower endpoint in \eqref{eq:appendix-aeta} gives
\begin{equation}\label{eq:appendix-q}
 q=r\partial_r\mathfrak a_\eta
 =-r^3\eta(y+z).
\end{equation}
Substituting \eqref{eq:appendix-q} into \eqref{eq:appendix-div-away} gives, for $x=y+z\ne y$,
\begin{equation}\label{eq:appendix-pointwise-div}
 \partial_{x_j}(K_\eta)_{ij}{}^k(x,y)
 =-(b_\eta)_i{}^k(x,y),
\end{equation}
with $b_\eta$ as in \eqref{eq:appendix-beta}.

\smallskip
\noindent\emph{Step 3: the diagonal term and the finite-rank correction.}
By \eqref{eq:appendix-kernel-order}, $K_\eta(\cdot,y)$ is locally integrable.  Hence
\[
 T_i{}^k:=\partial_{x_j}(K_\eta)_{ij}{}^k+(b_\eta)_i{}^k
\]
is supported at $x=y$.  To determine its order, let $\varphi\in C_c^\infty(\R^3)$ and integrate by parts on $\{|x-y|>\epsilon\}$.  The contribution of the smooth term $(b_\eta)_i{}^k\varphi$ over $B_\epsilon(y)$ is $O(\epsilon^3)$.  On the boundary sphere one has $|K_\eta|\lesssim\epsilon^{-2}$, while $\varphi(x)-\varphi(y)=O(\epsilon)$ and the surface measure is $O(\epsilon^2)$; hence replacing $\varphi(x)$ by $\varphi(y)$ changes the boundary term by $O(\epsilon)$.  Thus $T_i{}^k$ depends only on $\varphi(y)$ and is a multiple of $\delta_y$, with no derivative of $\delta_y$:
\[
 T_i{}^k=c_i{}^k(y)\delta_y.
\]
Choose a compactly supported cutoff $\chi_y$ equal to one on the union of the $x$-supports of $K_\eta(\cdot,y)$ and $(b_\eta)_i{}^k(\cdot,y)$.  Testing the preceding identity against $\chi_y$, the distributional derivative of $K_\eta$ contributes zero, and therefore
\[
 c_i{}^k(y)=\int_{\R^3}(b_\eta)_i{}^k(x,y)\,\dd x.
\]
Using $\int\eta=1$ and integration by parts,
\[
 \int(x-y)_\ell\partial_\ell\eta\,\dd x=-3,
 \qquad
 \int(x-y)_i\partial_k\eta\,\dd x=-\delta_i^k.
\]
Therefore $c_i{}^k=\delta_i^k$, and
\begin{equation}\label{eq:appendix-kernel-div-distribution}
 \partial_{x_j}(K_\eta)_{ij}{}^k(x,y)
 =\delta_i^k\delta_y(x)-(b_\eta)_i{}^k(x,y)
\end{equation}
in distributions.  Integrating against $f_k(y)$ proves \eqref{eq:appendix-divSeta}.

Write
\[
 F_k=\int_{\R^3}f_k(y)\,\dd y,
 \qquad
 M_{ik}=\int_{\R^3}y_i f_k(y)\,\dd y.
\]
Equation \eqref{eq:appendix-beta} gives
\begin{equation}\label{eq:appendix-Bmoments}
 \begin{split}
 (B_\eta f)_i={}&2\eta F_i
 +\frac12\partial_\ell\eta\,(x_\ell F_i-M_{\ell i})\\
 &-\frac12\partial_k\eta\,(x_iF_k-M_{ik}).
 \end{split}
\end{equation}
If $F=0$, then
\begin{equation}\label{eq:appendix-Btorque}
 (B_\eta f)_i
 =\frac12\partial_k\eta\,(M_{ik}-M_{ki}).
\end{equation}
The antisymmetric moments $M_{ik}-M_{ki}$ are exactly the total torques in \eqref{eq:compatibility}; hence compatible data satisfy $B_\eta f=0$.

\smallskip
\noindent\emph{Step 4: smoothness and the $L^1$ estimate.}
The kernel depends linearly on $\eta$ and obeys the translation identity
\begin{equation}\label{eq:appendix-translation-identity}
 (\partial_{x_\ell}+\partial_{y_\ell})K_\eta(x,y)
 =K_{\partial_\ell\eta}(x,y).
\end{equation}
The identity is immediate for $x\ne y$ from simultaneous translation of $x$ and $y$, which leaves $z=x-y$ fixed and differentiates only the argument of $\eta$ in \eqref{eq:appendix-aeta}.  Since both kernels are locally integrable, the identity also holds in distributions across the diagonal.  Distributional integration by parts in $y$ therefore gives
\begin{equation}\label{eq:appendix-derivative-transfer}
 \partial_{x_\ell}S_\eta f
 =S_\eta(\partial_\ell f)+S_{\partial_\ell\eta}f.
\end{equation}
Iterating \eqref{eq:appendix-derivative-transfer} expresses every $x$-derivative of $S_\eta f$ as a finite sum of integral operators of the form \eqref{eq:appendix-Seta}, with derivatives of $\eta$ and $f$ in place of $\eta$ and $f$.  Each corresponding kernel satisfies the same locally integrable $|x-y|^{-2}$ bound.  More explicitly, the integral over $|x-y|<\rho$ is bounded uniformly in $x$ by $C\rho$, while on $|x-y|\ge\rho$ the kernel is smooth in $x$ and dominated convergence applies.  First letting $x$ vary and then $\rho\downarrow0$ proves continuity of every derivative.  Hence $S_\eta f\in C_c^\infty(U;\Sym_3)$.

Finally, Fubini's theorem and \eqref{eq:appendix-kernel-order} yield
\[
 \begin{split}
 \|S_\eta f\|_1
 &\le\int_U|f(y)|\left(\int_U|K_\eta(x,y)|\,\dd x\right)\dd y\\
 &\le C_U\int_U|f(y)|
 \left(\int_0^D r^{-2}r^2\,\dd r\right)\dd y
 \le C_U\|f\|_1,
 \end{split}
\]
which is \eqref{eq:appendix-L1}.  This estimate uses only local integrability of the order $-1$ kernel, not an endpoint estimate for an order-zero Calder\'on--Zygmund operator.
\end{proof}

\section*{Acknowledgments}

The author acknowledges the use of ChatGPT (OpenAI) as an auxiliary
tool in the preparation of Lemma \ref{lem:dipole}. All mathematical arguments and
conclusions in that lemma were independently checked and verified by
the author, who assumes full responsibility for its content.


\begin{thebibliography}{99}

\bibitem{AlbrittonBrueColombo2022}
D. Albritton, E. Bru\`e, and M. Colombo,
\emph{Non-uniqueness of Leray solutions of the forced Navier--Stokes equations},
Ann. of Math. (2) \textbf{196} (2022), 415--455.

\bibitem{BDIS2015}
T. Buckmaster, C. De Lellis, P. Isett, and L. Sz\'ekelyhidi Jr.,
\emph{Anomalous dissipation for $1/5$-H\"older Euler flows},
Ann. of Math. (2) \textbf{182} (2015), 127--172.

\bibitem{BerselliSpirito2021}
L. C. Berselli and S. Spirito,
\emph{On the existence of Leray--Hopf weak solutions to the Navier--Stokes equations},
Fluids \textbf{6} (2021), Paper No. 42.

\bibitem{BrueColomboKumar2024}
E. Bru\`e, M. Colombo, and A. Kumar,
\emph{Flexibility of two-dimensional Euler flows with integrable vorticity},
arXiv:2408.07934, 2024.

\bibitem{BuckmasterVicol2019}
T. Buckmaster and V. Vicol,
\emph{Nonuniqueness of weak solutions to the Navier--Stokes equation},
Ann. of Math. (2) \textbf{189} (2019), 101--144.

\bibitem{ChengKwonLi2021}
X. Cheng, H. Kwon, and D. Li,
\emph{Non-uniqueness of steady-state weak solutions to the surface quasi-geostrophic equations},
Comm. Math. Phys. \textbf{388} (2021), 1281--1295.

\bibitem{CheskidovHou2026}
A. Cheskidov and H. Hou,
\emph{On non-uniqueness of mild solutions and stationary singular solutions to the Navier--Stokes equations},
arXiv:2603.03666, 2026.

\bibitem{CheskidovLuo2022}
A. Cheskidov and X. Luo,
\emph{Sharp nonuniqueness for the Navier--Stokes equations},
Invent. Math. \textbf{229} (2022), 987--1054.

\bibitem{CheskidovZengZhang2025}
A. Cheskidov, Z. Zeng, and D. Zhang,
\emph{Global dissipative solutions of the 3D Naiver--Stokes and MHD equations},
arXiv:2503.05692, 2025.

\bibitem{ColomboColomboKumar2025}
M. Colombo, R. Colombo, and A. Kumar,
\emph{A convex integration scheme for the continuity equation past the Sobolev embedding threshold},
arXiv:2504.03578, 2025.

\bibitem{DaiGiriRadu2024}
M. Dai, V. Giri, and R.-O. Radu,
\emph{An Onsager-type theorem for SQG},
J. Eur. Math. Soc., forthcoming; arXiv:2407.02582, 2024.

\bibitem{DaneriSzekelyhidi2017}
S. Daneri and L. Sz\'ekelyhidi Jr.,
\emph{Non-uniqueness and h-principle for H\"older-continuous weak solutions of the Euler equations},
Arch. Ration. Mech. Anal. \textbf{224} (2017), 471--514.

\bibitem{DLS2009}
C. De Lellis and L. Sz\'ekelyhidi Jr.,
\emph{The Euler equations as a differential inclusion},
Ann. of Math. (2) \textbf{170} (2009), 1417--1436.

\bibitem{DLS2013}
C. De Lellis and L. Sz\'ekelyhidi Jr.,
\emph{Dissipative continuous Euler flows},
Invent. Math. \textbf{193} (2013), 377--407.

\bibitem{DLS2014}
C. De Lellis and L. Sz\'ekelyhidi Jr.,
\emph{Dissipative Euler flows and Onsager's conjecture},
J. Eur. Math. Soc. \textbf{16} (2014), 1467--1505.

\bibitem{EncisoPenafielPeraltaExtension2024}
A. Enciso, J. Pe\~nafiel-Tom\'as, and D. Peralta-Salas,
\emph{An extension theorem for weak solutions of the 3D incompressible Euler equations and applications to singular flows},
Forum Math. Pi \textbf{13} (2025), e21.

\bibitem{EncisoPenafielPeraltaMHD2025}
A. Enciso, J. Pe\~nafiel-Tom\'as, and D. Peralta-Salas,
\emph{H\"older continuous dissipative solutions of ideal MHD with nonzero helicity},
arXiv:2507.23749, 2025.

\bibitem{EncisoPenafielPeraltaSteady2025}
A. Enciso, J. Pe\~nafiel-Tom\'as, and D. Peralta-Salas,
\emph{Steady 3D Euler flows via a topology-preserving convex integration scheme},
arXiv:2501.13632, 2025.

\bibitem{Fujii2026}
M. Fujii,
\emph{Sharp non-uniqueness for the Navier--Stokes equations in scaling critical spaces},
arXiv:2602.19846, 2026.


\bibitem{FujiiL2}
M.~Fujii,
\emph{Non-unique $L^2$ solutions to the stationary Navier--Stokes equations on the whole plane},
preprint, arXiv:2608.17456, 2026.

\bibitem{Galdi2011}
G. P. Galdi,
\emph{An Introduction to the Mathematical Theory of the Navier--Stokes Equations: Steady-State Problems},
2nd ed., Springer Monographs in Mathematics, Springer, 2011.

\bibitem{GiardiSzekelyhidi2026}
M. Giardi and L. Sz\'ekelyhidi Jr.,
\emph{$C^{1/5^-}$ convex integration solutions of ideal MHD},
arXiv:2604.12091, 2026.

\bibitem{GiriKwonNovack2026}
V. Giri, H. Kwon, and M. Novack,
\emph{Non-conservation of a generalized helicity in the Euler equations},
arXiv:2601.05869, 2026.

\bibitem{GiriRadu2024}
V. Giri and R.-O. Radu,
\emph{The Onsager conjecture in 2D: a Newton--Nash iteration},
Invent. Math. \textbf{238} (2024), 691--768.

\bibitem{GismondiMaPathakRadu2026}
N. Gismondi, K. (Mark) Ma, M. Pathak, and A. F. Radu,
\emph{Non-unique solutions to the periodic gKdV equation},
arXiv:2606.06916, 2026.

\bibitem{Gromov1973}
M. L. Gromov,
\emph{Convex integration of differential relations. I},
Math. USSR-Izv. \textbf{7} (1973), 329--343.

\bibitem{Gromov1986}
M. Gromov,
\emph{Partial Differential Relations},
Ergebnisse der Mathematik und ihrer Grenzgebiete, vol. 9, Springer-Verlag, Berlin, 1986.

\bibitem{IMOT2025}
P. Isett, Y. Mao, S.-J. Oh, and Z. Tao,
\emph{Integral formulas for under/overdetermined differential operators via recovery on curves and the finite-dimensional cokernel condition I: General theory},
arXiv:2509.04617, 2025.

\bibitem{Isett2018}
P. Isett,
\emph{A proof of Onsager's conjecture},
Ann. of Math. (2) \textbf{188} (2018), 871--963.

\bibitem{Leray1934}
J. Leray,
\emph{Sur le mouvement d'un liquide visqueux emplissant l'espace},
Acta Math. \textbf{63} (1934), 193--248.

\bibitem{LiQu2026}
H. Li and P. Qu,
\emph{Non-uniqueness for the hypo-dissipative compressible 3D magnetohydrodynamic equations},
arXiv:2606.21040, 2026.

\bibitem{LooiIsett2024}
S.-Z. Looi and P. Isett,
\emph{A proof of Onsager's conjecture for the SQG equation},
arXiv:2407.02578, 2024.

\bibitem{Luo2019}
X. Luo,
\emph{Stationary solutions and nonuniqueness of weak solutions for the Navier--Stokes equations in high dimensions},
Arch. Ration. Mech. Anal. \textbf{233} (2019), 701--747.

\bibitem{LuoTiti2020}
T. Luo and E. S. Titi,
\emph{Non-uniqueness of weak solutions to hyperviscous Navier--Stokes equations: on sharpness of J.-L. Lions exponent},
Calc. Var. Partial Differential Equations \textbf{59} (2020), Paper No. 92.

\bibitem{MaoQuElastodynamics2025}
S. Mao and P. Qu,
\emph{The null condition in elastodynamics leads to non-uniqueness},
arXiv:2502.07521, 2025.

\bibitem{MaoQuLame2025}
S. Mao and P. Qu,
\emph{Non-uniqueness for the nonlinear dynamical Lam\'e system},
J. Differential Equations \textbf{446} (2025), Paper No. 113603.

\bibitem{MiaoNieYeFiniteEnergy2024}
C. Miao, Y. Nie, and W. Ye,
\emph{Non-uniqueness of weak solutions to the Navier--Stokes equations in $\R^3$},
arXiv:2412.10404, 2024.

\bibitem{MiaoNieYe2024}
C. Miao, Y. Nie, and W. Ye,
\emph{Sharp non-uniqueness for the Navier--Stokes equations in $\R^3$},
arXiv:2412.09637, 2024.

\bibitem{MiaoNieYe2025}
C. Miao, Y. Nie, and W. Ye,
\emph{On Onsager-type conjecture for the Els\"asser energies of the ideal MHD equations},
arXiv:2504.06071, 2025.

\bibitem{Nash1954}
J. Nash,
\emph{$C^1$ isometric imbeddings},
Ann. of Math. (2) \textbf{60} (1954), 383--396.

\bibitem{QuZhang2026}
P. Qu and M. Zhang,
\emph{Non-uniqueness of weak solutions to the 3-D stationary MHD equations in Besov space with negative regularity index},
arXiv:2606.16141, 2026.

\bibitem{Scheffer1993}
V. Scheffer,
\emph{An inviscid flow with compact support in space-time},
J. Geom. Anal. \textbf{3} (1993), 343--401.

\bibitem{Shnirelman1997}
A. Shnirelman,
\emph{On the nonuniqueness of weak solution of the Euler equation},
Comm. Pure Appl. Math. \textbf{50} (1997), 1261--1286.

\bibitem{Temam2001}
R. Temam,
\emph{Navier--Stokes Equations: Theory and Numerical Analysis},
AMS Chelsea Publishing, Providence, RI, 2001.

\bibitem{ZhaoActiveScalar2024}
X. Zhao,
\emph{An Onsager-type theorem for general 2D active scalar equations},
arXiv:2412.11094, 2024.

\end{thebibliography}
\end{document}